\DeclareMathAlphabet{\mathpzc}{OT1}{pzc}{m}{it}
\newcommand{\R}{\mathbb{R}}
\newcommand{\polS}{\mathbb{S}}
\newcommand{\polD}{\mathbb{D}}
\newcommand{\polA}{\mathbb{A}}
\newcommand{\calI}{\mathcal{I}}
\newcommand{\calJ}{\mathcal{J}}
\newcommand{\bu}{\mathbf{u}}
\newcommand{\bv}{\mathbf{v}}
\newcommand{\be}{\mathbf{e}}
\newcommand{\GRAD}{\nabla}
\newcommand{\bMcal}{\boldsymbol{\mathcal{M}}}
\newcommand{\calM}{\mathcal{M}}
\DeclareMathOperator{\DIV}{div}
\newcommand{\diff}{\, \mbox{\rm d}}
\newcommand{\ie}{i.e.,\@\xspace}
\DeclareMathOperator{\diam}{diam}
\newcommand{\bF}{{\mathbf{F}}}
\newcommand{\bC}{{\mathbf{C}}}
\newcommand{\bL}{{\mathbf{L}}}
\newcommand{\bW}{{\mathbf{W}}}
\newcommand{\bU}{{\mathbf{U}}}
\newcommand{\polG}{{\mathbb{G}}}
\newcommand{\polN}{{\mathbb{N}}}
\newcommand{\bG}{{\mathbf{G}}}
\newcommand{\bE}{{\mathbf{E}}}
\newcommand{\bef}{{\mathbf{f}}}
\newcommand{\bX}{{\mathbf{X}}}
\newcommand{\bvphi}{{\boldsymbol{\varphi}}}
\newcommand{\bpsi}{{\boldsymbol{\psi}}}
\newcommand{\bmu}{{\boldsymbol{\mu}}}
\newcommand{\bw}{{\mathbf{w}}}
\newcommand{\T}{\mathscr{T}_h}
\renewcommand{\P}{\mathcal{P}}
\newcommand{\vare}{\boldsymbol{\varepsilon}}
\newcommand{\cL}{\mathring{L}}
\newcommand{\bQ}{\mathbf{Q}}
\newcommand{\bP}{\mathbf{P}}
\newcommand{\bI}{\mathbf{I}}
\newcommand{\pe}{{\mathsf{p}}}
\DeclareMathOperator{\supp}{supp}
\DeclareMathOperator{\dist}{dist}
\DeclareMathOperator{\tr}{tr}
\newcommand{\frakG}{{\mathfrak{G}}}
\newcommand{\frakq}{{\mathfrak{q}}}
\newcommand{\calF}{{\boldsymbol{\mathcal{F}}}}
\newcommand{\Fortin}{{\mathcal{F}_h}}
\newcommand{\EO}[1]{#1}
\newcommand{\AJS}[1]{#1}
\newcommand{\TheTitle}{On the analysis and approximation of some models of fluids over weighted spaces on convex polyhedra}
\newcommand{\ShortTitle}{Fluids on convex polyhedra}
\newcommand{\TheAuthors}{E.~Ot\'arola, A. J.~Salgado}
\headers{\ShortTitle}{\TheAuthors}
\title{{\TheTitle}\thanks{EO has been partially supported by CONICYT through FONDECYT project 11180193. AJS has been partially supported by NSF grant DMS-1720213.}}
\author{
  Enrique Ot\'arola\thanks{Departamento de Matem\'atica, Universidad T\'ecnica Federico Santa Mar\'ia, Valpara\'iso, Chile.
    (\email{enrique.otarola@usm.cl}, \url{http://eotarola.mat.utfsm.cl/}).}
  \and
  Abner J.~Salgado\thanks{Department of Mathematics, University of Tennessee, Knoxville, TN 37996, USA.
    (\email{asalgad1@utk.edu}, \url{http://www.math.utk.edu/\string~abnersg})}
}
\begin{document}

\maketitle

\begin{abstract}
We study the Stokes problem over convex polyhedral domains on weighted Sobolev spaces. The weight is assumed to belong to the Muckenhoupt class $A_q$ for $q \in (1,\infty)$. We show that \EO{the Stokes} problem is well-posed for all $q$. In addition, we show that the finite element Stokes projection is stable on weighted spaces. With the aid of these tools, we provide well-posedness and approximation results to some classes of non-Newtonian fluids.
\end{abstract}

\begin{keywords}
Convex polyhedra, non-Newtonian fluids, singular sources, Muckenhoupt weights, weighted estimates, finite element approximation.
\end{keywords}

\begin{AMS}
35Q35,         
35Q30,         
35R06,          
65N15,          
65N30,          
76Dxx.
\end{AMS}

\section{Introduction}
\label{sec:intro}

The purpose of this work is to study well-posedness and approximation results, on weighted spaces, for some models of non-Newtonian fluids on convex polyhedral domains. To be specific, we will study the following problem
\begin{equation}
\label{eq:nlStokes}
  \begin{dcases}
    -\DIV \polS(x,\vare(\bu)) + \GRAD \pe = -\DIV \bef, & \text{ in } \Omega, \\
    \DIV \bu = g, & \text{ in } \Omega, \\
    \bu = \boldsymbol0, & \text{ on } \partial \Omega.
  \end{dcases}
\end{equation}
\EO{Throughout our work,} we assume the domain $\Omega \subset \R^3$ to be a convex polyhedron. For a vector field $\bv$, we denote by $\vare(\bv) = \tfrac12 (\GRAD \bv + \GRAD \bv^\intercal)$ its symmetric gradient. Specific 
assumptions for the stress tensor $\polS$ and for the data $\bef$ and $g$ will be made explicit as we study particular models.

We must make an immediate comment regarding the space dimension. While the three dimensional case is the most significant from the physical point of view, our restriction to this case is of purely technical nature. This is because many of our results heavily rely on H\"older estimates for the derivatives of the Green matrix;
see section~\ref{sub:Green}. 
As far as we are aware, some of these \EO{estimates} are not available in the literature in either dimension two or higher than three. As soon as these become available, our results will readily extend to these dimensions as well.

The main source of difficulty and originality in this work can be summarized as follows. First, most of the well-posedness results for non-Newtonian fluids of the form \eqref{eq:nlStokes} are presented for domains that are at least $C^1$; see, for instance, \cite{MR2001659,MR3582412,MR2272870,MR2338412,MR2471134}. However, this assumption is not amenable to finite element discretization. For this reason, we focus on convex polyhedra. We are able to provide approximation results, over quasiuniform meshes, for each one of the models that we consider. Second, we allow the data to be singular. Even in the linear case, \ie $\polS(x,\vare) = 2\mu\vare$, for a constant $\mu>0$, the study of well-posedness results on convex polyhedra is far from being trivial. Here, by singular, we mean that $\bef \in \bL^q(\omega,\Omega)$ and $g \in \cL^q(\omega,\Omega)$, where $\omega \in A_q$ for $q \in (1,\infty)$; see section~\ref{sec:Prelim} for notation. This allows to have even measure valued forcings.
Finally, 
we may allow the constitutive relation $\polS(x,\vare)$ to be degenerate, as naturally appears when considering the Smagorinski model of turbulence described in section \ref{sec:Rappazz}. In such a setting, problem \eqref{eq:nlStokes}, once again, must be understood in suitably weighted Sobolev spaces.

The history of the analysis and approximation of classes of non-Newtonian fluids is too vast and deep to even attempt to provide a complete description here. It can be, for instance, traced back to the work of Lady\v{z}enskaja \cite{MR0155092,MR0241832,MR0226907}, and the famous model that now bears her name; see also \cite{MR1089323}. Other classes of non-Newtonian fluids that are similar to those we consider here have been studied in \cite{MR1602949,MR1348587}. Regarding approximation, to our knowledge, some of the first works that deal with finite element discretizations of non-Newtonian fluids are \cite{MR1034917,MR1069652,MR1211613}. The estimates of these works were later refined and improved in \cite{MR1213411,MR1301740}. This last reference introduced the concept of quasi--norm error bounds, and led to further developments using Orlicz spaces and shifted \emph{N}--functions that were proposed, for instance, in \cite{MR2914267}. Similar estimates, but via different arguments, were obtained in \cite{MR3042563}.

We organize our presentation as follows. In section~\ref{sec:Prelim} we present notation and gather some well-known facts that shall be useful for our purposes. In particular, we present regularity results for the classical Stokes problem in convex polyhedra and suitable H\"older estimates on the associated fundamental solution. We also recall some facts about the approximation properties of finite elements in standard and weighted Sobolev spaces. Section~\ref{sec:Casas} is our first original contribution. Via a duality argument we obtain an $L^2$-error estimate for the discretization of the Stokes problem when the forcing is a general Radon measure. This estimate improves our previous work \cite{DOS:19}. The fundamental solution estimates are used in section~\ref{sec:Apweights} to show that, for every $q \in (1,\infty)$ and every $\omega \in A_q$, the Stokes problem is well-posed on weighted spaces $\bW^{1,q}_0(\omega,\Omega) \times \cL^q(\omega,\Omega)$. The study of non-Newtonian models begins in section~\ref{sec:Bulicek2}, where we extend the well-posedness results of \cite{MR3582412} to the case of convex polyhedra and provide approximation results over quasiuniform meshes. Finally, in section~\ref{sec:Rappazz}, we study a variant of the well-known Smagorinski model of turbulence, which was originally developed in \cite{MR3488119}, and aims at reducing the well-known overdissipation effects that this model presents near walls \cite{MR3494304}. Existence of solution  and approximation results over quasiuniform meshes are obtained.

\section{Preliminaries}
\label{sec:Prelim}
\EO{We begin by fixing notation and the setting in which we will operate. Throughout our work $\Omega \subset \R^3$ is a convex polyhedron.}
For $w \in L^1(\Omega)$ and $D \subset \Omega$, we set
\[
  \fint_D w \diff x = \frac1{|D|} \int_D w \diff x,
  \qquad
  \EO{|D| = \int_{D} \diff x}.
\]
\EO{We shall use standard notation for Lebesgue and Sobolev spaces.} Spaces of vector valued functions and its elements will be indicated with boldface. Since we will mostly deal with incompressible fluids, we must indicate a way to make the pressure unique.
To do so, for $q\in[1,\infty)$ we denote by $\cL^q(\Omega)$ the space of functions in $L^q(\Omega)$ that have zero averages.

\EO{For a set $E \subset \R^3$ we denote its interior by $\mathring E$}. For a cube $Q$ with sides parallel to the coordinate axes we denote by $\ell(Q)$ the length of its sides. If $Q$ is a cube, and $a>0$,  we denote by $aQ$ the cube with same center but with sidelength $a \ell(Q)$.

\EO{We denote by $\polN_0 = \polN \cup \{0\}$ and by $\delta_{i,j}$ the Kronecker delta.} The relation $A \lesssim B$ indicates that there is a nonessential constant $c$ such that $A \leq c B$. By $A \approx B$ we mean $A \lesssim B \lesssim A$. Whenever $q \in (1,\infty)$, we indicate by $q'$ its H\"older conjugate.

\subsection{Weights} 
One of the tools that will allow us to deal with singular sources, and nonstandard rheologies, is the use of weighted spaces and weighted norm inequalities. A weight is an almost everywhere positive function in $L^1_{\mathrm{loc}}(\R^3)$. Let $q \in [1,\infty)$, we say that a weight $\omega$ is in the Muckenhoupt class $A_q$ if \cite{MR1800316,MR2491902,MR1774162}
\begin{equation}
\begin{aligned}
\label{A_pclass}
\left[ \omega \right]_{A_q} & := \sup_{B} \left( \fint_{B} \omega \diff x\right) \left( \fint_{B} \omega^{1/(1-q)} \diff x \right)^{q-1}  < \infty, \quad q \in (1,\infty),
\\
\left[ \omega \right]_{A_1} & := \sup_{B} \left( \fint_{B} \omega \diff x\right)  \sup_{x \in B} \frac{1}{\omega(x)}< \infty, \quad q =1,
\end{aligned}
\end{equation}
where the supremum is taken over all balls $B$ in $\R^3$. We call $[\omega]_{A_q}$, for $q \in [1,\infty)$, the Muckenhoupt characteristic of the weight $\omega$. We observe that, for $q \in (1,\infty)$, there is a certain conjugacy in the $A_q$ classes: $\omega \in A_q$ if and only if $\omega' = \omega^{1/(1-q)} = \EO{\omega^{1-q'} }\in A_{q'}$ \EO{\cite[Proposition 7.2 (2)]{MR1800316}. Finally, we note that $A_p \subset A_q$ for $1 \leq p < q$ \cite[Proposition 7.2 (i)]{MR1800316}. In particular, we have that} $A_1 \subset A_q$ for all $q >1$.

\AJS{Let $q \in (1,\infty)$ and $\omega \in A_q$. We define the space $\cL^q(\omega,\Omega) = L^q(\omega,\Omega) \cap \cL^1(\Omega)$, where $L^q(\omega,\Omega)$ denotes the Lebesgue space \EO{of $q$-integrable functions} with respect to the measure $\omega \diff x$. Weighted Sobolev spaces are defined accordingly.} On weighted spaces the following inf-sup condition holds \cite[Lemma 6.1]{DOS:19}
\begin{equation}
\label{eq:infsuppres}
  \| p \|_{L^q(\omega,\Omega)} \lesssim \sup_{\boldsymbol0 \neq \bv \in \bW^{1,q'}_0(\omega',\Omega)} \frac{ \int_\Omega p \DIV \bv \diff x}{ \| \GRAD \bv \|_{\bL^{q'}(\omega',\Omega)}} \quad \forall p \in \cL^q(\omega,\Omega).
\end{equation}
This estimate will become useful in the sequel. \EO{On the other hand,} \AJS{the following weighted version of Korn's inequality holds \cite[Theorem 5.15]{MR2643399}
\begin{equation}
\label{eq:WeightedKorn}
  \| \GRAD \bv \|_{\bL^q(\omega,\Omega)} \lesssim \| \vare(\bv) \|_{\bL^q(\omega,\Omega)}, \quad \forall \bv \in \bW^{1,q}_0(\omega,\Omega).
\end{equation}

Let} $z \in \Omega$ be an interior point of $\Omega$ and $\alpha \in \R$. Define
\begin{equation}
\label{distance_A2}
{\textup{\textsf{d}}}_{z} ^\alpha(x) = |x-z|^{\alpha}.
\end{equation}
The weight ${\textup{\textsf{d}}}_{z} ^\alpha \in A_2$ provided that $\alpha \in (-3,3)$. Notice that there is a neighborhood of $\partial \Omega$ where ${\textup{\textsf{d}}}_{z} ^\alpha$ has no degeneracies or singularities. This observation motivates us to define a restricted class of Muckenhoupt weights \cite[Definition 2.5]{MR1601373}.

\begin{definition}[class $A_q(\Omega)$]
\label{def:ApOmega}
Let $\Omega \subset \R^3$ be a Lipschitz domain and $q \in [1,\infty)$. We say that $\omega \in A_q$ belongs to $A_q(\Omega)$ if there is an open set $\mathcal{G} \subset \Omega$ and $\varepsilon, \omega_l >0$ such that:
\[
\{ x \in \Omega: \mathrm{dist}(x,\partial\Omega)< \varepsilon\} \subset \mathcal{G},
\qquad
\omega |_{ \bar{\mathcal{G}} } \in C(\bar{\mathcal{G}}),
\qquad
\omega_l \leq \omega(x) \quad \forall x \in \bar{\mathcal{G}}.
\]
\end{definition}

In \cite{OS:17infsup} it was shown that, provided the weight belongs to this class, the Stokes problem is well-posed on weighted spaces and Lipschitz domains. One of the highlights of this work is that we, in a sense, remove this restriction on the weight \EO{at the expense of assuming the convexity of the domain.}

\subsection{Maximal operators}
For $w \in L^1_{\mathrm{loc}}(\AJS{\R^3})$, the Hardy--Littlewood maximal operator is defined by \cite[Chapter 7, \EO{Section 1}]{MR1800316}
\begin{equation}
\label{eq:Maximal}
 \mathcal{M} w(x) = \sup_{Q \ni x} \fint_{Q} |w(y)| \diff y,
\end{equation}
where the supremum is taken over all cubes $Q$ containing $x$. One of the main properties of the Muckenhoupt classes $A_q$ \EO{previously} introduced is that, for $q \in (1,\infty)$, the maximal operator $\calM$ is continuous on $L^q(\omega,\Omega)$ \cite[Theorem 7.3]{MR1800316}.

We will also make use of the sharp maximal operator, which is defined, for $w \in L^1_{\mathrm{loc}}(\Omega)$, by \cite[Chapter 6, Section 2]{MR1800316}
\begin{equation}
\label{eq:Maximal_sharp}
 \calM^\sharp_\Omega w(x) = \sup_{Q \ni x} \fint_{Q} \left|w(y) - \fint_Q w(z) \diff z \right| \diff y.
\end{equation}
The supremum is taken over all cubes $Q \subset \Omega$ containing $x$. It is important to notice that, when bounding the sharp maximal operator it suffices to bound the difference between $w$ and any constant $c$ \EO{\cite[Proposition 6.5]{MR1800316}}. In fact,
\begin{equation}
  \begin{aligned}
    \int_{Q}  \left| w(y) - \fint_Q w(z) \diff z \right| \diff y
    & \leq 
    \int_{Q}  \left| w(y) - c \right| \diff y
    +
    \int_{Q}  \left| c - \fint_Q w(z) \diff z \right| \diff y
    \\
    & \leq 2 \int_{Q}  \left| w(y) - c \right| \diff y.
  \end{aligned}
\label{eq:average_does_not_matter}
\end{equation}

\subsection{The Stokes problem}
\label{sub:Stokesprelim}

\EO{In this section}, we collect some facts on the Stokes problem that are well-known and will be used repeatedly. \EO{Let us} set the constitutive relation $\polS(x,\vare) = 2\mu\vare$, where $\mu>0$. Problem \eqref{eq:nlStokes} thus becomes
\begin{equation}
\label{eq:Stokes}
  \begin{dcases}
    -\DIV(2\mu\vare(\bu)) + \GRAD \pe = \bF, & \text{ in } \Omega, \\
    \DIV \bu = g, & \text{ in } \Omega, \\
    \bu = \boldsymbol0, & \text{ on } \partial\Omega.
  \end{dcases}
\end{equation}
Problem \eqref{eq:Stokes} has a unique solution provided $\bF \in \bW^{-1,2}(\Omega)$ and $g \in \cL^2(\Omega)$, with a corresponding estimate. We also have the following regularity result.

\begin{proposition}[regularity]
\label{prop:regular}
Let $\Omega \subset \R^3$ be a convex polyhedron. If $\bF \in \bL^2(\Omega)$ and $g = 0$, then the solution to \eqref{eq:Stokes} is such that
\[
  \| \bu \|_{\bW^{2,2}(\Omega)} + \| \pe \|_{\AJS{W^{1,2}(\Omega)}} \lesssim \| \bF \|_{\bL^2(\Omega)}.
\]
\end{proposition}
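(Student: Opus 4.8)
The plan is to upgrade the regularity of the weak solution in stages, the only genuine difficulty being the behaviour of $(\bu,\pe)$ near the edges and vertices of $\partial\Omega$. First I would record the energy estimate: since $\Omega$ is bounded, Poincar\'e's inequality gives the continuous embedding $\bL^2(\Omega)\hookrightarrow\bW^{-1,2}(\Omega)$, so $\bF$ is an admissible datum for the variational formulation of \eqref{eq:Stokes} with $g=0$, and the existence/uniqueness result recalled above (equivalently, \eqref{eq:infsuppres} with $q=2$ and $\omega\equiv1$) yields the unique pair $(\bu,\pe)\in\bW^{1,2}_0(\Omega)\times\cL^2(\Omega)$ with
\[
  \|\bu\|_{\bW^{1,2}(\Omega)}+\|\pe\|_{L^2(\Omega)}\lesssim\|\bF\|_{\bW^{-1,2}(\Omega)}\lesssim\|\bF\|_{\bL^2(\Omega)}.
\]
It then remains to bound $\GRAD^2\bu$ and $\GRAD\pe$ in $L^2(\Omega)$.

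Next I would localize. Fix a finite open cover $\{U_i\}$ of $\bar\Omega$ and a subordinate partition of unity $\{\varphi_i\}$ such that each $U_i$ is contained in $\Omega$, or meets $\partial\Omega$ only in the relative interior of a single face, or contains exactly one edge and no vertex, or contains exactly one vertex. Each pair $(\varphi_i\bu,\varphi_i\pe)$ solves a Stokes system on $U_i\cap\Omega$ whose right-hand side is $\varphi_i\bF$ plus commutator terms involving $\GRAD\varphi_i$, $\bu$ and $\pe$; by the energy estimate these commutators are controlled in $\bL^2$ by $\|\bF\|_{\bL^2(\Omega)}$. For the interior pieces and the (flat) face pieces, classical regularity for the Stokes system up to a smooth boundary (Cattabriga, Agmon--Douglis--Nirenberg) gives the local $\bW^{2,2}\times\bW^{1,2}$ bound with the desired dependence, so the proof reduces to a local $\bW^{2,2}\times\bW^{1,2}$ estimate near an edge and near a vertex.

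For those remaining pieces I would invoke the Kondrat'ev--Maz'ya--Plamenevskii theory. Near an edge, $\Omega$ coincides, up to a rigid motion, with a dihedral wedge of opening $\theta\in(0,\pi)$ times $\R$; near a vertex, with an infinite cone over a geodesically convex polygon on the unit sphere of $\R^3$. In each case the solution splits into a part belonging to $\bW^{2,2}\times\bW^{1,2}$ and a finite sum of singular functions of the form $r^{\lambda}(\log r)^k$ times a fixed angular profile (times smooth functions of the edge variable, in the wedge case), with $\lambda$ ranging over the eigenvalues of the associated operator pencil; all singular terms belong to $\bW^{2,2}\times\bW^{1,2}$ — so that the local estimate holds — precisely when the pencil has no eigenvalue in the critical strip fixed by the embedding $r^\lambda\in H^2_{\mathrm{loc}}$, namely $\mathrm{Re}\,\lambda\in(0,1]$ for the planar wedge pencil and $\mathrm{Re}\,\lambda\in(-\tfrac12,\tfrac12]$ for the cone pencil. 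Convexity of $\Omega$ — $\theta<\pi$ at every edge, with the corresponding restriction on the solid angles at every vertex — keeps these strips free of eigenvalues, exactly as in the familiar $H^2$-regularity of the Dirichlet Laplacian on convex polyhedra. Gluing the local estimates through $\{\varphi_i\}$ and absorbing the lower-order contributions via the energy estimate would then finish the proof.

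The hard part is the spectral input for the \emph{vertex} pencil of the Stokes system: in contrast with the scalar case this is a vectorial eigenvalue problem coupled with the pressure on a general spherical polygon, and a sharp localisation of its spectrum in terms of the geometry is delicate. I would rely here on the detailed analyses of Maz'ya--Rossmann and of Dauge (see also Kellogg--Osborn, Bacuta--Nistor--Zikatanov, Guo--Schwab); once that spectral information is granted, the passage from ``empty critical strip'' to the stated bound is routine weighted-space (Kondrat'ev) machinery and the gluing is standard. Given the amount of existing theory, in the paper itself I would expect this proposition to be stated with a pointer to that literature rather than proved from scratch.
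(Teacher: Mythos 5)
Your outline is correct and is, in substance, exactly what the paper does: the paper's entire proof is a citation to the corner/edge regularity literature for the Stokes system on convex polyhedra (Dauge, Kellogg--Osborn, Maz'ya--Rossmann), which supplies precisely the spectral information about the edge and vertex pencils that you correctly identify as the crux. Your exponent ranges for the critical strips and the reduction via partition of unity match the standard argument in those references, so there is nothing to add.
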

\begin{proof}
See \EO{\cite[Theorem 2]{MR0404849}}, \cite{MR977489}, and \cite{MR1301452}; 
see also \cite[Corollary 1.8]{MR2987056}.
\end{proof}

Problem \eqref{eq:Stokes} is also well-posed in $L^q$ spaces.

\begin{theorem}[well-posedness in $L^q$]
\label{thm:WPLq}
Let $q\in (1,\infty)$ and $\Omega \subset \R^3$ be a convex polyhedron. If $\bF \in \bW^{-1,q}(\Omega)$ and $g \in \cL^q(\Omega)$, then problem \eqref{eq:Stokes} has a unique solution $(\bu, \pe) \in \bW^{1,q}_0(\Omega) \times \cL^q(\Omega)$ that satisfies the estimate
\[
  \| \vare(\bu) \|_{\bL^q(\Omega)} + \| \pe \|_{L^q(\Omega)} \lesssim \| \bF \|_{\bW^{-1,q}(\Omega)} + \| g \|_{L^q(\Omega)}.
\]
\AJS{In particular, if $\bF = -\DIV \bef$ with $\bef \in \bL^q(\Omega)$, we have
\begin{equation}
  \| \vare(\bu) \|_{\bL^q(\Omega)} + \| \pe \|_{L^q(\Omega)} \lesssim \| \bef \|_{\bL^q(\Omega)} + \| g \|_{L^q(\Omega)}.
  \label{eq:unweighted_F_as_DIVf}
\end{equation}
In both estimates, the hidden constants are independent of $\bF$, $g$, $\bu$ and $\pe$.}
\end{theorem}
\begin{proof}
Evidently, we only need to comment on the case $q \neq 2$. For a proof of the result when $q > 2$, we refer the reader to the first item in Section 5.5 of \cite{MR2321139}. Using the equivalent characterization of well-posedness via inf-sup conditions, one can deduce well-posedness for $q \in (1,2)$. \AJS{Finally, the inequality
\[
  \| \DIV\bef \|_{\bW^{-1,q}(\Omega)} \leq \| \bef \|_{\bL^{q}(\Omega)},
\]
immediately yields the second estimate.}
\end{proof}

\begin{remark}[equivalence]
\label{rem:LapISDivVare}
In the literature, the Stokes problem is usually presented with the term $\DIV(2\mu\vare(\bu))$ replaced by $\mu\Delta\bu$. Using the elementary identity
\[
  \DIV(2\vare(\bv)) = \Delta \bv + \GRAD \DIV \bv,
\]
it is not difficult to see that this only amounts to a redefinition of the pressure. This redefinition, however, does not affect the conclusions of Proposition~\ref{prop:regular} or Theorem~\ref{thm:WPLq}.
\end{remark}

\subsection{The Green matrix}
\label{sub:Green}

We introduce the Green matrix $\polG : \bar\Omega \times \Omega \to \R^{4\times 4}$ for problem \eqref{eq:Stokes} as follows \EO{\cite[Section 11.5]{MR2641539}, \cite{MR2808700}}. Let $\phi \in C_0^\infty(\Omega)$ be such that 
\[
  \int_\Omega \phi \diff x = 1.
\]
We represent the entries of $\polG$ as
\[
  \polG = \begin{pmatrix} \bG_1 & \bG_2 & \bG_3 & \bG_4 \\ \lambda_1 & \lambda_2 & \lambda_3 & \lambda_4 \end{pmatrix}, \quad \bG_j = (\bG_{1,j}, \bG_{2,j}, \bG_{3,j})^\intercal, \quad j \in \{1,2,3, \AJS{4}\},
\]
where the pairs $(\bG_j,\lambda_j)_{j=1}^4$ are distributional solutions of
\[
  \begin{dcases}
    -2\DIV_x( \vare_x(\bG_j(x,\xi))) + \GRAD_x \lambda_j(x,\xi) = \delta(x-\xi) \be_j, & x,\xi \in \Omega, \\
    \DIV_x \bG_j(x,\xi) = 0, & x,\xi \in \Omega, \\
    \bG_j(x,\xi) = \boldsymbol0, & x \in \partial\Omega, \ \xi \in \Omega
  \end{dcases}
\]
for $j=1,2,3$ and
\[
  \begin{dcases}
    -2\DIV_x(\vare_x( \bG_4(x,\xi))) + \GRAD_x \lambda_4(x,\xi) = 0, & x,\xi \in \Omega, \\
    -\DIV_x \bG_4(x,\xi) = \delta(x-\xi) - \phi(x), & x,\xi \in \Omega, \\
    \bG_4(x,\xi) = \boldsymbol0, & x \in \partial\Omega, \ \xi \in \Omega.
  \end{dcases}
\]
Here, $\{\be_j\}_{j=1}^3$ denotes the canonical basis of $\R^3$ and $\delta$ the Dirac distribution. For uniqueness, we also require that
\[
  \int_\Omega \lambda_j(x,\xi) \phi(x) \diff x =0, \quad \xi \in \Omega, \quad j = 1, \ldots, 4.
\]
The existence and uniqueness of the Green matrix $\polG$ follows from \EO{\cite[Theorem 11.4.1]{MR2641539}}. Note that $\polG_{i,j}(x,\xi) = \polG_{j,i}(\xi,x)$ for $x, \xi \in \Omega$ and $i,j \in \{1, \ldots, 4\}$ \EO{\cite[Theorem 11.4.1]{MR2641539}}. The importance of this matrix lies in the fact that it provides a representation formula for the solution of \eqref{eq:Stokes}. In particular, we have that 
\begin{equation}
\label{eq:representU}
  \bu_j(\xi) = \frac1\mu \langle \bF,\bG_j(\cdot,\xi) \rangle - \int_\Omega \lambda_j(x,\xi) g(x) \diff x, \quad j \in \{1,2,3\},
\end{equation}
where $\langle \cdot,  \cdot\rangle$ denotes a suitable duality pairing. This representation shall become useful in the sequel.

\EO{The following estimates for  the Green matrix will be essential in what follows.}

\begin{theorem}[H\"older estimates]
\label{thm:HolderGreen}
Let $\Omega \subset \R^3$ be a convex polyhedron.
\EO{There exists} $\sigma \in (0,1)$, that depends only on the domain, such that for any multiindices $\alpha, \beta \in \polN_0^3$, the Green matrix $\polG$ satisfies
\[
  \left| \partial_x^\alpha \partial_\xi^\beta \polG_{i,j}(x,\xi) - \partial_y^\alpha \partial_\xi^\beta \polG_{i,j}(y,\xi) \right| \lesssim |x-y|^\sigma 
  \left( |x-\xi|^{-a} + |y-\xi|^{-a} \right),
\]
for $x,y,\xi \in \Omega$ with $x \neq y$ and
\[
  \left| \partial_x^\alpha \partial_\xi^\beta \polG_{i,j}(x,\xi) - \partial_x^\alpha \partial_\eta^\beta \polG_{i,j}(x,\eta) \right| \lesssim |\xi-\eta|^\sigma 
  \left( |x-\xi|^{-a} + |x - \eta|^{-a} \right),
\]
for $x,\xi, \eta\in \Omega$ with $\xi \neq \eta$, whenever $|\alpha|\leq 1-\delta_{i,4}$ and $|\beta|\leq 1-\delta_{j,4}$. Here, $i,j \in \{1,2,3,4\}$ and
\[
  a = 1+\sigma +\delta_{i,4}+\delta_{j,4}+|\alpha|+|\beta|.
\]
\end{theorem}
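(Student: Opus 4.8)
Since the Green matrix satisfies $\polG_{i,j}(x,\xi)=\polG_{j,i}(\xi,x)$, the second displayed inequality follows from the first by interchanging the pairs $(i,\alpha)$ and $(j,\beta)$; note that both the exponent $a$ and the constraints $|\alpha|\le1-\delta_{i,4}$, $|\beta|\le1-\delta_{j,4}$ are symmetric under this swap. I would therefore only prove the first inequality. Fix $\xi\in\Omega$ and admissible multiindices $\alpha,\beta$, write $v:=\partial_x^\alpha\partial_\xi^\beta\polG_{i,j}(\cdot,\xi)$, and set $a_0:=a-\sigma=1+\delta_{i,4}+\delta_{j,4}+|\alpha|+|\beta|$. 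The starting point is the observation that, for $j\in\{1,2,3\}$, the pair $(\partial_\xi^\beta\bG_j(\cdot,\xi),\partial_\xi^\beta\lambda_j(\cdot,\xi))$ solves the \emph{homogeneous} Stokes system \eqref{eq:Stokes} in $\Omega\setminus\{\xi\}$ with vanishing velocity trace on $\partial\Omega$, because the $x$--operators commute with $\partial_\xi$ and $\partial_\xi^\beta\delta(x-\xi)$ is supported on $\{x=\xi\}$; for $j=4$ one necessarily has $\beta=0$, and the only difference is the fixed, compactly supported smooth datum $\phi$ in the divergence equation, which changes nothing essential.

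The proof rests on two estimates that I would establish first: a \emph{size bound}
\[
  |v(x)|\lesssim |x-\xi|^{-a_0},\qquad x\in\Omega\setminus\{\xi\},
\]
and a \emph{local oscillation bound}: writing $\rho:=|x-\xi|$,
\[
  [v]_{C^{0,\sigma}\bigl(B_{\rho/8}(x)\cap\Omega\bigr)}\lesssim \rho^{-\sigma}\,\|v\|_{L^\infty(B_{\rho/4}(x)\cap\Omega)}\lesssim \rho^{-a}.
\]
Granting these, the theorem follows by a two--case split; assume without loss of generality $|x-\xi|\le|y-\xi|$. If $|x-y|\ge\tfrac18|x-\xi|$, then $|y-\xi|\lesssim|x-y|$ as well, and since $t^{-a_0}=t^{\sigma}t^{-a}$, the triangle inequality together with the size bound gives $|v(x)-v(y)|\le|v(x)|+|v(y)|\lesssim|x-y|^{\sigma}\bigl(|x-\xi|^{-a}+|y-\xi|^{-a}\bigr)$. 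If instead $|x-y|<\tfrac18|x-\xi|=:\rho/8$, then $\xi\notin B_{\rho/4}(x)$, $y\in B_{\rho/8}(x)\cap\Omega$, and the oscillation bound yields $|v(x)-v(y)|\le[v]_{C^{0,\sigma}(B_{\rho/8}(x)\cap\Omega)}|x-y|^{\sigma}\lesssim\rho^{-a}|x-y|^{\sigma}$, which is the asserted estimate.

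For the size bound I would use the classical dyadic argument: on annular regions $A_R:=\{R/2<|x-\xi|<2R\}\cap\Omega$ the pair $(\bG_j,\lambda_j)$ solves a homogeneous Stokes problem, so a Caccioppoli inequality on a telescoping family of such annuli, together with local regularity for the Stokes system in a convex polyhedron (the localized counterpart of Proposition~\ref{prop:regular}), the embedding $\bW^{2,2}\hookrightarrow\bL^\infty$ in $\R^3$, and elliptic scaling, yields $\|\polG_{i,j}(\cdot,\xi)\|_{L^\infty(A_R)}\lesssim R^{-1-\delta_{i,4}-\delta_{j,4}}$; each $x$-- or $\xi$--differentiation costs one further power of $R$. (Alternatively, this size bound is already contained in \cite{KMR,MR2808700}.)

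The oscillation bound is the heart of the matter, and the case near edges and vertices is \textbf{the main obstacle}. After rescaling $B_{\rho/4}(x)$ to unit size --- by scale invariance of the Stokes system this accounts for all powers of $\rho$ --- one must prove a unit--scale Hölder estimate for a homogeneous Stokes solution that vanishes on the Dirichlet part of the boundary. If the rescaled ball stays away from $\partial\Omega$, or meets only the relative interior of a single face, this is classical interior, resp. boundary, Schauder theory for the Stokes system, and derivatives of every order are Hölder continuous with the correct scaling. If, however, the ball is close to an edge or a vertex of $\Omega$, the solution is no longer smooth up to the boundary: it admits a singular expansion governed by the operator pencil of the corresponding plane wedge (for an edge) or spherical polygon (for a vertex). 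Convexity of $\Omega$ forces the eigenvalues of all of these pencils to have real part strictly larger than $1$; consequently $\vare(\bu)$ and $\pe$ --- hence $v=\partial_x^\alpha\partial_\xi^\beta\polG_{i,j}$ for the admissible multiindices --- are Hölder continuous up to the boundary, with an exponent that can be chosen uniformly over all edges and vertices. That common exponent is precisely the $\sigma\in(0,1)$ of the statement, and turning this picture into a rigorous bound with the scaling displayed above is exactly what the weighted a priori estimates for the Stokes system in dihedral and conical domains from \cite{KMR,MR2808700} provide.
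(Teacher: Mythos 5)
Your outline is sound: the reduction of the second inequality to the first via the symmetry $\polG_{i,j}(x,\xi)=\polG_{j,i}(\xi,x)$, and the two--case split combining a pointwise size bound with a rescaled local H\"older (oscillation) bound, is exactly the standard mechanism behind estimates of this type, and your bookkeeping of the exponents checks out. The paper, however, does not reprove any of this: its entire proof is a citation of \cite{MR2808700}, which establishes these H\"older estimates for the Green matrix of the Stokes system in convex polyhedra when the velocity operator is the Laplacian, followed by the change of variables of Remark~\ref{rem:LapISDivVare} (passing from $-\Delta\bu$ to $-\DIV(2\mu\vare(\bu))$ only redefines the pressure) and the observation that the $\lambda$--components have the same differentiability properties as derivatives of the $\bG$--components. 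Your sketch is therefore a genuinely different presentation, and it correctly isolates the crux --- uniform H\"older continuity of $\GRAD\bG_j$ and $\lambda_j$ up to edges and vertices, which is where convexity (pencil eigenvalues with real part exceeding $1$) and the restriction to three dimensions enter --- but for that crux you, too, ultimately invoke \cite{KMR,MR2808700}, so nothing is gained in self--containedness. One step of your sketch over--claims: the embedding $\bW^{2,2}\hookrightarrow\bL^\infty$ yields the size bound only for $|\alpha|=|\beta|=0$; the assertion that each differentiation ``costs one further power of $R$'' already requires pointwise control of $\GRAD_x\GRAD_\xi\polG$ up to the boundary near corners, i.e.\ the same pencil--based $C^{1,\sigma}$ regularity as the oscillation bound, and does not follow from the localized $\bW^{2,2}$ theory of Proposition~\ref{prop:regular}. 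Since you note that the size bound can alternatively be taken from the references, this is a presentational rather than a fatal flaw.
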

\begin{proof}
\EO{References \cite{MR2641539,MR2808700} provide} the claimed estimates for the Green's matrix for the case where the operator acting on the $\bG$--components of the matrix is the Laplacian. It suffices to proceed with the change of variables described in Remark~\ref{rem:LapISDivVare} and observe that the $\lambda$--components of the Green's matrix have the same differentiability properties as derivatives of the $\bG$--components.
\end{proof}

\subsection{Finite elements}
\label{sub:FEM}

Many of the results we wish to discuss involve error estimates for finite element schemes. \AJS{Since our domain $\Omega$ is a convex polyhedron, it can be triangulated exactly. We thus assume that, for every $h>0$, we have at hand a quasiuniform, in the sense of \cite{MR851383,CiarletBook,Guermond-Ern}, triangulation $\T$ of the domain $\Omega$.} \EO{We construct, over these triangulations,} finite dimensional spaces $\bX_h \times M_h \subset \bW^{1,\infty}(\Omega) \times ( L^\infty(\Omega) \cap \cL^2(\Omega))$ that satisfy, for every $q \in (1,\infty)$ and $\omega \in A_q$, the compatibility condition
\begin{equation}
\label{eq:infsuph}
  \| p_h \|_{L^{q'}(\omega',\Omega)} \lesssim \sup_{\bv_h \in \bX_h} \frac{ \int_\Omega \DIV \bv_h p_h \diff x}{ \| \GRAD \bv_h \|_{\bL^q(\omega,\Omega)}} \quad \forall p_h \in M_h.
\end{equation}
Moreover, we require that these spaces have \AJS{the usual} approximation properties 
over quasiuniform meshes of size $h$. In particular, we require the existence of a \emph{stable} operator $\calI_h: \bL^1(\Omega) \to \bX_h$ that preserves the space $\bX_h$ and satisfies the error estimates
\begin{equation}
\label{eq:interpolant}
  \| \bv - \calI_h \bv \|_{\bL^\infty(\Omega)} \lesssim h^{1/2} \| \bv \|_{\bW^{2,2}(\Omega)} \quad \forall \bv \in \bW^{1,2}_0(\Omega) \cap \bW^{2,2}(\Omega),
\end{equation}
and, for $q \in (1,\infty)$ and $\omega \in A_q$,
\begin{equation}
\label{eq:interpolantWeights}
  \| \GRAD(\bv - \calI_h \bv) \|_{\bL^q(\omega,\Omega)} \lesssim \inf_{\bv_h \in \bX_h} \| \GRAD(\bv - \bv_h) \|_{\bL^q(\omega,\Omega)} \quad \EO{\forall \bv \in \bW^{1,q}_0(\omega,\Omega).}
\end{equation}

We finally comment that, since the continuous inf-sup condition \eqref{eq:infsuppres} holds, \eqref{eq:infsuph} is equivalent to the existence of a so--called \emph{Fortin operator} \cite[Lemma 4.19]{Guermond-Ern}, that is an operator $\Fortin : \bW^{1,q}_0(\omega,\Omega) \to \bX_h$ that preserves the divergence, \ie
\[
  \int_\Omega r_h \DIV( \bv - \Fortin \bv) \diff x = 0 \quad \forall \bv \in \bW^{1,q}_0(\omega,\Omega), \quad \forall r_h \in M_h,
\]
\EO{preserves the space $\bX_h$ and is stable \cite{MR3071173}.} This immediately implies that $\Fortin$ possesses quasi-optimal approximation properties, \ie for all $\bv \in \bW^{1,q}_0(\omega,\Omega)$,
\[
  \| \GRAD(\bv - \Fortin \bv) \|_{\bL^q(\omega,\Omega)} \lesssim \inf_{\bv_h \in \bX_h} \| \GRAD(\bv - \bv_h) \|_{\bL^q(\omega,\Omega)}.
\]

\AJS{Within the unweighted setting, examples of such pairs are well-known in the literature \cite{MR851383,Guermond-Ern}. For extensions to the weighted case, we refer the reader to \cite[Section 6]{DOS:19}.
An operator satisfying \eqref{eq:interpolant} and \eqref{eq:interpolantWeights} has been constructed in \cite{NOS3}. \EO{We mention that, usually, $\bX_h$ and $M_h$}
consists of piecewise polynomials subject to $\T$. }

Given $(\bu,p) \in W^{1,1}_0(\Omega) \times \cL^1(\Omega)$, with $\DIV \bu =0$, we define its \emph{Stokes projection} as the pair $(\bu_h,p_h) \in \bX_h \times M_h$ that satisfies
\begin{equation}
\label{eq:defofStokesprojection}
  \begin{dcases}
    \AJS{2}\mu \int_\Omega \vare (\bu- \bu_h): \vare(\bv_h) \diff x - \int_\Omega (p -p_h) \DIV \bv_h \diff x = 0 & \forall \bv_h \in \bX_h, \\
    \int_\Omega \DIV(\bu - \bu_h) r_h \diff x = 0 & \forall r_h \in M_h.
  \end{dcases}
\end{equation}

We recall that, under the given assumptions on the finite element spaces, the Stokes projection is stable on weighted spaces.

\begin{theorem}[weighted stability estimate]
\label{thm:stabclassic}
Let $\Omega \subset \R^3$ be a convex polyhedron. If $\omega \in A_1$ then, the finite element Stokes projection, defined in \eqref{eq:defofStokesprojection}, is stable in $\bW^{1,2}_0(\omega,\Omega) \times \cL^2(\omega,\Omega)$, in the sense that
\[
  \| \vare(\bu_h) \|_{\bL^2(\omega,\Omega)} + \| p_h \|_{L^2(\omega,\Omega)} \lesssim \| \vare(\bu) \|_{\bL^2(\omega,\Omega)} + \| p \|_{L^2(\omega,\Omega)},
\]
where the hidden constant is independent of $h$, $\bu$ and $p$.
\end{theorem}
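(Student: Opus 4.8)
I would argue as follows. The plan is to reduce the statement to a weighted $L^2$ bound for the finite element Stokes solution operator, and to obtain the latter from a pointwise sharp maximal function estimate combined with the Fefferman--Stein inequality and the $A_1$ characterization of the Hardy--Littlewood maximal operator. The first step is to exploit that $\DIV \bu = 0$ in \eqref{eq:defofStokesprojection}: the second equation then says that $\bu_h$ is discretely divergence free, while the first one reads
\[
  \mu\int_\Omega \vare(\bu_h):\vare(\bv_h)\diff x - \int_\Omega p_h\DIV\bv_h\diff x = \int_\Omega \bef:\vare(\bv_h)\diff x \quad \forall\bv_h\in\bX_h, \qquad \bef := \mu\vare(\bu) - p\bI .
\]
Hence $(\bu_h,p_h)$ is the finite element Stokes approximation associated with the datum $\bF = -\DIV\bef$ and $g = 0$, and since $\|\bef\|_{\bL^2(\omega,\Omega)}\lesssim\|\vare(\bu)\|_{\bL^2(\omega,\Omega)}+\|p\|_{L^2(\omega,\Omega)}$, it suffices to show that the map $\bef\mapsto(\vare(\bu_h),p_h)$ is bounded from $\bL^2(\omega,\Omega)$ into $\bL^2(\omega,\Omega)\times L^2(\omega,\Omega)$, with a constant independent of $h$.

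Fix $\omega\in A_1$. Being finite element functions over a bounded domain, $\vare(\bu_h)$ and $p_h$ lie in $\bL^\infty(\Omega)$, hence a priori in $L^2(\omega,\Omega)$; this is the qualitative information needed to run the Fefferman--Stein argument. Moreover $\fint_\Omega\vare(\bu_h)\diff x = \boldsymbol0$ because $\bu_h$ vanishes on $\partial\Omega$, and $\fint_\Omega p_h\diff x = 0$ because $p_h\in\cL^2(\Omega)$. Consequently the version of the Fefferman--Stein inequality valid on bounded domains, where the additional term involving the full average of the function then vanishes, gives, with a constant independent of $h$,
\[
  \|\vare(\bu_h)\|_{L^2(\omega,\Omega)} + \|p_h\|_{L^2(\omega,\Omega)} \lesssim \|\calM^\sharp_\Omega\vare(\bu_h)\|_{L^2(\omega,\Omega)} + \|\calM^\sharp_\Omega p_h\|_{L^2(\omega,\Omega)} .
\]
The task thus becomes the pointwise bound $\calM^\sharp_\Omega\vare(\bu_h)(x) + \calM^\sharp_\Omega p_h(x)\lesssim(\calM(|\bef|^2)(x))^{1/2}$ for $x\in\Omega$, uniformly in $h$.

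To prove it, fix $x\in\Omega$ and a cube $Q\ni x$ with $Q\subset\Omega$; by \eqref{eq:average_does_not_matter} it is enough to control the oscillation of $\vare(\bu_h)$ and of $p_h$ over $Q$ about suitable constants. Write $\bef = \bef_1+\bef_2$ with $\bef_1:=\bef\chi_{4Q}$, and let $(\bu_h^{(i)},p_h^{(i)})$ be the finite element Stokes solution with datum $\bef_i$. For the near part the unweighted ($q=2$) stability of the discrete Stokes problem, which follows from \eqref{eq:infsuph} with $\omega\equiv 1$, gives
\[
  \fint_Q|\vare(\bu_h^{(1)})|^2\diff z + \fint_Q|p_h^{(1)}|^2\diff z \lesssim |Q|^{-1}\|\bef_1\|_{\bL^2(\Omega)}^2 \lesssim \fint_{4Q}|\bef|^2\diff z \leq \calM(|\bef|^2)(x) .
\]
For the far part, $\bef_2$ vanishes on $4Q$, so $(\bu_h^{(2)},p_h^{(2)})$ satisfies the homogeneous discrete Stokes equations against all $\bv_h\in\bX_h$ supported in $2Q\cap\Omega$; representing it through the discrete Green matrix $\polG_h$ associated with \eqref{eq:Stokes}, invoking the Hölder estimates of Theorem~\ref{thm:HolderGreen} transferred to $\polG_h$ uniformly in $h$, and decomposing $\Omega\setminus 4Q$ dyadically into annuli centered at $Q$, one obtains for any $z,z'\in Q$
\[
  |\vare(\bu_h^{(2)})(z)-\vare(\bu_h^{(2)})(z')| \lesssim \sum_{k\geq 2}\ell(Q)^\sigma\,(2^k\ell(Q))^{-3-\sigma}\int_{2^{k+1}Q}|\bef|\diff y \lesssim \sum_{k\geq 2}2^{-k\sigma}(\calM(|\bef|^2)(x))^{1/2} \lesssim (\calM(|\bef|^2)(x))^{1/2},
\]
and likewise for $p_h^{(2)}$; here $\sigma\in(0,1)$ is the exponent of Theorem~\ref{thm:HolderGreen}. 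Adding the two contributions, using \eqref{eq:average_does_not_matter} once more and taking the supremum over all admissible $Q$ yields the desired pointwise estimate. The regime $\ell(Q)\lesssim h$, where $\polG_h$ no longer resembles the continuous Green matrix, I would handle separately by combining inverse estimates with local energy estimates for the discrete problem.

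It remains to assemble the pieces. Since $\omega\in A_1$, the maximal operator is bounded on $L^1(\omega,\Omega)$, whence
\[
  \|(\calM(|\bef|^2))^{1/2}\|_{L^2(\omega,\Omega)}^2 = \|\calM(|\bef|^2)\|_{L^1(\omega,\Omega)} \lesssim \||\bef|^2\|_{L^1(\omega,\Omega)} = \|\bef\|_{\bL^2(\omega,\Omega)}^2 ;
\]
chaining this with the two previous displays and the reduction step closes the argument. I expect the main obstacle to be the ingredient underlying the far-part estimate: the uniform-in-$h$ Hölder estimates for the discrete Green matrix $\polG_h$. The natural route is to transfer the continuous estimates of Theorem~\ref{thm:HolderGreen} to $\polG_h$ by a local finite element error analysis, which is precisely where the $W^{2,2}$ regularity on convex polyhedra of Proposition~\ref{prop:regular} and the approximation property \eqref{eq:interpolant} come into play; controlling the cubes $Q$ that touch or approach $\partial\Omega$, where this regularity is needed up to the boundary, and the sub-mesh-scale cubes, is the other delicate point.
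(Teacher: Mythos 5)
Your overall strategy --- reduce to the solution operator $\bef\mapsto(\vare(\bu_h),p_h)$, bound its sharp maximal function pointwise by a maximal function of the datum, and conclude by weighted norm inequalities --- is a discrete replica of the continuous argument of Lemma~\ref{lem:lem24Duran} and Theorem~\ref{thm:velest}, and it is not the route the paper takes. Unfortunately it has two genuine gaps. The first, and decisive, one is the far-field estimate: you invoke ``H\"older estimates of Theorem~\ref{thm:HolderGreen} transferred to $\polG_h$ uniformly in $h$,'' but no such estimates are available, and obtaining them is not a routine transfer. Pointwise H\"older (indeed, even pointwise) bounds on derivatives of a discrete Green matrix amount to local $W^{1,\infty}$-type stability of the Stokes projection, which on convex polyhedra is precisely the delicate issue; the entire Appendix~\ref{sec:GNS} is devoted to proving only the much weaker \emph{weighted $L^2$} estimate \eqref{eq:ApproxGreenEstimate} for $\frakG-\frakG_h$, via the long bootstrap of \cite{MR342245,MR2121575} adapted to symmetric gradients. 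Relatedly, $\vare(\bu_h^{(2)})$ is a discontinuous piecewise polynomial, so its pointwise oscillation over a cube cannot satisfy a H\"older bound; one must instead estimate $\fint_Q|\vare(\bu_h^{(2)})-c|\diff x$ by comparison with the continuous solution, i.e., by local error estimates --- again exactly the hard content you have deferred, together with the sub-mesh-scale cubes and the cubes approaching $\partial\Omega$.

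The second gap is your last display: for $\omega\in A_1$ the Hardy--Littlewood maximal operator is \emph{not} bounded on $L^1(\omega,\Omega)$ (only the weak type $(1,1)$ estimate holds; the strong bound fails already for $\omega\equiv1$), so $\|\calM(|\bef|^2)\|_{L^1(\omega,\Omega)}\lesssim\||\bef|^2\|_{L^1(\omega,\Omega)}$ is false. The standard repair --- the one used in Theorem~\ref{thm:velest} --- is to prove the pointwise bound with $\calM(|\bef|^s)^{1/s}$ for some $s<2$ and use $\omega\in A_1\subset A_{2/s}$; but in your near-field step this would require uniform-in-$h$ unweighted $W^{1,s}$ stability of the discrete Stokes problem for $s<2$, yet another nontrivial ingredient to be supplied. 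By contrast, the paper represents $\vare(\bu_h)(z)$ directly through an approximate Green function $(\frakG,\frakq)$ regularized at scale $h$ and its Stokes projection, reduces the theorem to the single weighted estimate \eqref{eq:ApproxGreenEstimate}, and establishes that estimate by duality and bootstrapping using the $W^{2,2}$ regularity of Proposition~\ref{prop:regular}; the $A_1$ hypothesis then enters only through $\calM\omega\lesssim\omega$. As written, your proposal outsources essentially all of the difficulty to assertions that are either unproved or, in the case of the $L^1(\omega)$ bound, incorrect.
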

\begin{proof}
The proof follows after small modifications to \cite[Theorem 4.1]{DOS:19}; see Appendix~\ref{sec:GNS} for details.
\end{proof}

\section{An error estimate in $L^2$}
\label{sec:Casas}

In this section, we discuss error estimates for discretizations of \eqref{eq:Stokes} in the case $g = 0$ and $\bF = \bmu \in \bMcal_b(\Omega)$, the space of vector valued Radon measures. In doing so, we shall extend the results of \cite{MR812624} to the Stokes problem, and slightly improve the error estimate of \cite[Corollary 5.4]{DOS:19}.

Let $q \in (1,3/2)$. Since $q'>3$, we have that $W^{1,q'}_0(\Omega) \hookrightarrow C(\bar\Omega)$. Therefore,
\[
  \calM_b(\Omega) = ( C_0(\bar\Omega) )' \hookrightarrow ( W^{1,q'}_0(\Omega) )' = W^{-1,q}(\Omega).
\]
Invoking Theorem~\ref{thm:WPLq}, we have that problem \eqref{eq:Stokes} is well-posed for such data. If, in addition, we assume that \EO{$q \geq 6/5$}, then we also have that $W^{1,q}(\Omega) \hookrightarrow L^2(\Omega)$. As a consequence, it makes sense to provide an error estimate in $L^2(\Omega)$. Our main result in this direction is the following.

\begin{theorem}[error estimate]
Let $\Omega \subset \R^3$ be a convex polyhedron, $q \in \EO{[}6/5,3/2)$, $\bF = \bmu \in \bMcal_{b}(\Omega)$, and $g=0$. Let $(\bu,\pe) \in \bW^{1,q}_0(\Omega) \times \cL^q(\Omega)$ be the solution to \eqref{eq:Stokes} and $(\bu_h,\pe_h) \in \bX_h \times M_h$ its Stokes projection as defined in \eqref{eq:defofStokesprojection}. Then, we have
\[
  \| \bu - \bu_h \|_{\bL^2(\Omega)} \lesssim h^{1/2} \| \bmu \|_{\bMcal_b(\Omega)},
\]
with a hidden constant independent of $h$, $(\bu,\pe)$, $(\bu_h,\pe_h)$, and $\bmu$.
\end{theorem}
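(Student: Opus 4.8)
The plan is to run a duality (Aubin--Nitsche) argument adapted to the measure-valued forcing. Fix an arbitrary $\boldsymbol\psi \in \bL^2(\Omega)$ and let $(\bz,r) \in \bW^{1,2}_0(\Omega) \times \cL^2(\Omega)$ solve the (dual) Stokes problem with right-hand side $\boldsymbol\psi$ and $g=0$; by Proposition~\ref{prop:regular} this dual solution satisfies $\| \bz \|_{\bW^{2,2}(\Omega)} + \| r \|_{\bW^{1,2}(\Omega)} \lesssim \| \boldsymbol\psi \|_{\bL^2(\Omega)}$. The idea is then to write $\int_\Omega (\bu - \bu_h) \cdot \boldsymbol\psi \diff x$ using the dual equation, insert the continuous and discrete Stokes equations, and subtract the Galerkin orthogonality \eqref{eq:defofStokesprojection} for the pair $(\bu_h,\pe_h)$. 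After inserting $\calI_h \bz$ (and the appropriate discrete pressure approximation) this should collapse to an expression of the form
\[
  \int_\Omega (\bu - \bu_h)\cdot\boldsymbol\psi \diff x
  = \langle \bmu, \bz - \calI_h \bz \rangle + (\text{pressure terms}),
\]
where the duality pairing on the right makes sense because $\bz - \calI_h \bz \in C(\bar\Omega)$ and $\bmu \in \bMcal_b(\Omega)$.

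Next I would estimate each term. For the measure term, since $\bmu$ is a bounded Radon measure acting on continuous functions,
\[
  |\langle \bmu, \bz - \calI_h \bz \rangle| \leq \| \bmu \|_{\bMcal_b(\Omega)} \, \| \bz - \calI_h \bz \|_{\bL^\infty(\Omega)} \lesssim h^{1/2} \| \bmu \|_{\bMcal_b(\Omega)} \| \bz \|_{\bW^{2,2}(\Omega)},
\]
by the $L^\infty$ interpolation estimate \eqref{eq:interpolant}. Combining with the dual regularity bound gives $h^{1/2}\|\bmu\|_{\bMcal_b(\Omega)}\|\boldsymbol\psi\|_{\bL^2(\Omega)}$, which after taking the supremum over $\boldsymbol\psi$ with $\|\boldsymbol\psi\|_{\bL^2(\Omega)}=1$ yields exactly the claimed rate. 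The pressure-type remainder terms have to be handled as well: because $\DIV\bu = 0$ and $\DIV\bu_h$ is orthogonal to $M_h$, and the dual pressure $r$ has $\bW^{1,2}$ regularity, these terms can be controlled by $\|\bu-\bu_h\|$ times a factor that one either absorbs or estimates by the same $h^{1/2}$ scaling; here one uses Theorem~\ref{thm:WPLq} (or the stability of the Stokes projection) to bound $\|\bu - \bu_h\|_{\bW^{1,q}_0(\Omega)} \lesssim \|\bmu\|_{\bMcal_b(\Omega)}$ and the Sobolev embedding $\bW^{1,q}(\Omega)\hookrightarrow \bL^2(\Omega)$ (valid since $q>6/5$) so that the quantity being estimated is finite.

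The main obstacle I anticipate is bookkeeping the pressure contributions cleanly: unlike the scalar elliptic case of \cite{MR812624}, one must carry along both the primal pressure $\pe \in \cL^q(\Omega)$ (only in $L^q$, not better) and the dual pressure $r$, and make sure that after subtracting the discrete equations every term either is a genuine interpolation error paired with something of the right integrability, or vanishes by one of the two orthogonality relations in \eqref{eq:defofStokesprojection}. A second, more technical point is that the duality pairing $\langle\bmu,\cdot\rangle$ must be matched with a function that is genuinely continuous on $\bar\Omega$; this is why the argument is set up with $\bz - \calI_h \bz$ (continuous, since $\bz \in \bW^{2,2}(\Omega)\hookrightarrow C(\bar\Omega)$ in three dimensions and $\calI_h\bz \in \bW^{1,\infty}(\Omega)$) rather than with $\bz_h$ alone, and why \eqref{eq:interpolant} — an $L^\infty$ rather than energy-norm interpolation estimate — is exactly the tool needed. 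Everything else is the standard manipulation of the Galerkin-orthogonality identity.
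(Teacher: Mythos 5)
Your overall strategy --- duality with the regular dual Stokes problem, pairing the measure with a continuous function, and the $L^\infty$ interpolation estimate \eqref{eq:interpolant} --- is the same as the paper's, and your estimate of the measure term is correct. The gap is precisely the point you flag as ``the main obstacle'': the pressure bookkeeping does not close the way you have set it up. The paper's key device is to insert into the duality identity not the interpolant $\calI_h\bz$ but the \emph{Stokes projection} $(\bvphi_h,\psi_h)$ of the dual pair. Because $(\bu_h,\pe_h)$ and $(\bvphi_h,\psi_h)$ each satisfy both orthogonality relations in \eqref{eq:defofStokesprojection}, and because $\bvphi-\bvphi_h\in\bW^{1,s}(\Omega)$ for $s\le 6$ is an admissible test function in the weak form of \eqref{eq:Stokes} with $\bF=\bmu$, every pressure contribution cancels \emph{exactly}, leaving the single term $\int_\Omega(\bvphi-\bvphi_h)\cdot\diff\bmu(x)$. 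The bound $\|\bvphi-\bvphi_h\|_{\bL^\infty(\Omega)}\lesssim h^{1/2}\|\bw\|_{\bL^2(\Omega)}$ is then proved in a separate step, via the triangle inequality through $\calI_h\bvphi$, estimate \eqref{eq:interpolant}, an inverse inequality, and the $O(h^2)$ $\bL^2$ error estimate for the Stokes projection on a convex domain.

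With your choice of $\calI_h\bz$, by contrast, terms such as $\int_\Omega\pe\,\DIV(\bz-\calI_h\bz)\diff x$ and $\int_\Omega r\,\DIV(\bu-\bu_h)\diff x$ survive, and they are genuinely hard to control at the rate $h^{1/2}$: the primal pressure $\pe$ lies only in $L^q(\Omega)$ with $q<3/2$, so you must measure $\DIV(\bz-\calI_h\bz)$ in $L^{q'}$ with $q'>3$, and from $\bz\in\bW^{2,2}(\Omega)$ the rate available in that norm (via elementwise inverse estimates from the $O(h)$ bound in $\bL^2$) is only $h^{3/q'-1/2}<h^{1/2}$. These terms are not of the form ``$\|\bu-\bu_h\|_{\bL^2(\Omega)}$ times something absorbable,'' nor do they vanish by orthogonality, since $\calI_h\bz$ is not discretely divergence--free. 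Replacing $\calI_h\bz$ by the Stokes projection of the dual solution in the identity, and reserving $\calI_h$ for the $L^\infty$ estimate only, repairs the argument and reproduces the paper's proof.
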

\begin{proof}
For $\bw \in \bL^2(\Omega)$, we let $(\bvphi,\psi) \in \bW^{1,2}_0(\Omega) \times \cL^2(\Omega)$ be the solution of
\begin{equation}
\label{eq:Stokes_for_duality}
\AJS{2}\mu \int_{\Omega} \vare(\bvphi) : \vare(\bv) \diff x  - \int_{\Omega} \psi \DIV \bv \diff x = \int_\Omega \bw \cdot \bv \diff x,
\qquad
\int_{\Omega} r \DIV \bvphi \diff x= 0 
\end{equation}
for all $\bv \in  \bW^{1,2}_0(\Omega)$ and $r \in \cL^2(\Omega)$. Since $\Omega $ is convex \EO{and $\bw \in \bL^2(\Omega)$,} owing to Proposition~\ref{prop:regular} we have that $( \bvphi, \psi) \in \bW^{2,2}(\Omega) \times W^{1,2}(\Omega)$. This, in particular, implies that if $(\bvphi_h,\psi_h) \in \bX_h \times M_h$ denotes its Stokes projection, we have
\begin{equation}
\label{eq:l_inf_estimate_dual}
\begin{aligned}
  \| \bvphi - \bvphi_h \|_{\bL^\infty(\Omega)} &\leq  \| \bvphi - \calI_h \bvphi  \|_{\bL^\infty(\Omega)}  +   \| \calI_h \bvphi  - \bvphi_h \|_{\bL^\infty(\Omega)}
  \\
  &\lesssim h^{1/2} \| \bvphi \|_{\bW^{2,2}(\Omega)} + h^{-3/2} \| \calI_h \bvphi  - \bvphi_h \|_{\bL^2(\Omega)} \\ &\lesssim h^{1/2} \| \bvphi \|_{\bW^{2,2}(\Omega)} \lesssim h^{1/2} \| \bw \|_{\bL^2(\Omega)},
\end{aligned}
\end{equation}
where we used the interpolation error estimate \eqref{eq:interpolant}, a basic inverse inequality, the invariance property of  $\calI_h$, the stability of $\calI_h$ in $\bL^2(\Omega)$, an interpolation estimate for $\calI_h$ in $\bL^2(\Omega)$, and the regularity results of Proposition \ref{prop:regular}.

Consider now problem \eqref{eq:Stokes_for_duality} with $\bw = \bu - \bu_h$ and set $\bv =  \bu-\bu_h \in \bW^{1,2}_0(\Omega)$. This immediately yields
\begin{equation}
  \| \bu-\bu_h \|_{\bL^2(\Omega)}^2 = \AJS{2}\mu \int_{\Omega} \vare(\bvphi) : \vare(\bu-\bu_h) \diff x- \int_{\Omega} \psi \DIV( \bu-\bu_h) \diff x.
  \label{eq:basic_estimate}
\end{equation}

Observe that, since $(\bvphi_h,\psi_h) \in \bX_h \times M_h$ corresponds to the Stokes projection of $(\bvphi,\psi)$ and the pair $(\bu_h, \pe_h) \in \bX_h \times M_h$, we have
\[
 \AJS{2}\mu \int_{\Omega} \vare(\bvphi - \bvphi_h) : \vare(\bu_h) \diff x - \int_{\Omega} (\psi - \psi_h) \DIV \bu_h \diff x= 0.
\]
Similarly,
\[
 \AJS{2}\mu \int_{\Omega} \vare(\bu- \bu_h) : \vare(\bvphi_h) \diff x- \int_{\Omega} (\pe - \pe_h) \DIV \bvphi_h \diff x= 0.
\]

On the other hand, since $\bvphi \in \bW^{2,2}(\Omega)$ and $\bvphi_h \in \bW^{1,\infty}(\Omega)$, then we have that $\bvphi - \bvphi_h \in \bW^{1,s}(\Omega)$ for every $s \leq 6$. We can thus consider $\bvphi - \bvphi_h$ as a test function in the weak version of \eqref{eq:Stokes} with $\bF = \bmu$ to arrive at
\[
 \AJS{2}\mu \int_{\Omega} \vare(\bu) :  \vare(\bvphi - \bvphi_h) \diff x - \int_{\Omega} \pe \DIV (\bvphi - \bvphi_h)  \diff x= \int_{\Omega}  (\bvphi - \bvphi_h) \cdot  \diff  \bmu(x).
\]

We can then rewrite the right hand side of \eqref{eq:basic_estimate} and invoke the previous three relations to conclude, on the basis of \eqref{eq:l_inf_estimate_dual}, that
\begin{align*}
  \| \bu-\bu_h \|_{\bL^2(\Omega)}^2 &= \int_\Omega (\bvphi - \bvphi_h) \cdot \diff \bmu(x) \leq \| \bvphi - \bvphi_h \|_{\bL^\infty(\Omega)} \| \bmu \|_{\bMcal_b(\Omega)} \\
  &\lesssim h^{1/2} \| \bu - \bu_h \|_{\bL^2(\Omega)} \| \bmu \|_{\bMcal_{\AJS{b}}(\Omega)},
\end{align*}
which is the announced result. This concludes the proof.
\end{proof}

\section{Well-posedness of the Stokes problem on weighted spaces}
\label{sec:Apweights}

Let us now consider the Stokes problem \eqref{eq:Stokes} with $\bF = -\DIV \bef$, $\bef \in \bL^q(\omega,\Omega)$, and $\omega \in A_q$. We begin by recalling that, for general Lipschitz domains, there is $\epsilon = \epsilon(\Omega) >0$ such that, if $|q-2|<\epsilon$, and $\omega$ belongs to the restricted class $A_q(\Omega)$, then this problem is well-posed; see \cite[Theorem 17]{OS:17infsup} \EO{and \cite[Proposition 2.4 and Remark 2.5]{DOS:19}}. On the other hand, if $\Omega$ is $C^1$, then \cite[Lemma 3.2]{MR3582412} shows well-posedness for all $q \in (1,\infty)$ and all $\omega \in A_q$. Here we will show a result that, in a sense, is intermediate between these two. We remove the restriction on the integrability index $q$ and the boundary behavior of the weight, thus showing well-posedness for all $q \in (1,\infty)$ and all $\omega \in A_q$, but at the expense of requiring that $\Omega$ is a convex polyhedron.

The main tool that we shall use is the representation of the velocity given in \eqref{eq:representU} and the H\"older estimates of the Green matrix described in Theorem~\ref{thm:HolderGreen}. We will  follow the ideas of \cite{DDO:17}, and extend the results therein to the Stokes problem.

We begin by noting that, by density, it suffices to assume that $\bef \in \bC_0^\infty(\Omega)$, so that from \eqref{eq:representU} we can write
\begin{equation}
\label{eq:newptwise}
  \bu_j(x) = \frac1\mu \int_\Omega \GRAD_y \bG_j(y,x) \EO{:} \bef(y) \diff y - \int_\Omega \lambda_j(y,x) g(y) \diff y.
\end{equation}
We begin with a simplified version of the Bogovski{\u\i} decomposition of a function with integral zero \cite{MR553920}; see also \cite{MR3198867}. Since, in our setting, the proof of this result is so simple, we include it for completeness.

\begin{lemma}[decomposition]
\label{lem:Bogovskiidecompose}
Let $q\in (1,\infty)$, $\omega \in A_q$, $g \in \cL^q(\omega,\Omega)$, and $Q \subset \Omega$ be \AJS{a cube with sides parallel to the coordinate axes and} such that $\tfrac32 Q \subset \Omega$. Then, there are $g_i \in \cL^q(\omega,\Omega)$, with $i \in \{ 1,2 \}$, such that
\[
  g = g_1 + g_2, 
  \quad 
  \supp g_1 \subset \frac32 Q, 
  \quad 
  \supp g_2 \subset Q^c,
\]
\AJS{and
\[
  \| g_1 \|_{L^q(\omega,\frac32Q)} \lesssim \| g \|_{L^q(\omega,\frac32Q)},
  \qquad
  \| g_2 \|_{L^q(\omega,\Omega)} \lesssim \| g \|_{L^q(\omega,\Omega)},
\]
where the hidden constants are} independent of $g$ and $Q$.
\end{lemma}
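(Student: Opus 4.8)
The plan is to obtain the splitting by multiplying $g$ by a cutoff of $Q$ and then restoring the zero--mean property of each of the two pieces by adding (resp.\ subtracting) a fixed, harmless bump function supported in the annular shell $\tfrac32 Q \setminus Q$.

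Concretely, since $\tfrac32 Q \subset \Omega$, I would fix $\zeta \in C^\infty(\R^3)$ with $0 \leq \zeta \leq 1$, $\zeta \equiv 1$ on $Q$, and $\supp \zeta \subset \tfrac32 Q$, together with $\eta \in L^\infty(\Omega)$ supported in $A := \tfrac32 Q \setminus Q$ and normalized by $\int_\Omega \eta \diff x = 1$; the natural choice is $\eta = |A|^{-1}$ on $A$ and $\eta = 0$ elsewhere, and one notes $|A| \approx |Q| \approx |\tfrac32 Q|$. Setting $c := -\int_\Omega \zeta g \diff x$, I define
\[
  g_1 := \zeta g + c\,\eta, \qquad g_2 := g - g_1 = (1-\zeta) g - c\,\eta.
\]
Then $\supp g_1 \subset \tfrac32 Q$ is immediate, while on $Q$ one has $\zeta \equiv 1$ and $\eta \equiv 0$, so $g_2 = 0$ a.e.\ on $Q$, \ie $\supp g_2 \subset Q^c$. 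The choice of $c$ forces $\int_\Omega g_1 \diff x = 0$, and since $g \in \cL^q(\omega,\Omega)$ has zero average we also get $\int_\Omega g_2 \diff x = 0$; hence both functions lie in $\cL^q(\omega,\Omega)$ as soon as the norm bounds below are established.

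For those bounds, the trivial estimates $\| \zeta g \|_{L^q(\omega,\Omega)} \leq \| g \|_{L^q(\omega,\Omega)}$ and $\| (1-\zeta) g \|_{L^q(\omega,\Omega)} \leq \| g \|_{L^q(\omega,\Omega)}$ reduce everything to controlling the correction $c\,\eta$, and this is the only step where $\omega \in A_q$ is used. By H\"older's inequality with exponents $q, q'$ and the weight $\omega$ (recall $\omega' = \omega^{1/(1-q)}$),
\[
  |c| \leq \int_{\frac32 Q} |g| \diff x \leq \| g \|_{L^q(\omega,\Omega)} \Bigl( \int_{\frac32 Q} \omega' \diff x \Bigr)^{1/q'},
\]
while, since $\eta = |A|^{-1}$ on $A$,
\[
  \| \eta \|_{L^q(\omega,\Omega)} = \frac{\omega(A)^{1/q}}{|A|} \lesssim \Bigl( \fint_{\frac32 Q} \omega \diff x \Bigr)^{1/q} |\tfrac32 Q|^{-1/q'}.
\]
Multiplying these two displays and using $1/q + 1/q' = 1$ to cancel the powers of $|\tfrac32 Q|$ gives
\[
  |c|\, \| \eta \|_{L^q(\omega,\Omega)} \lesssim \| g \|_{L^q(\omega,\Omega)} \Bigl( \fint_{\frac32 Q} \omega \diff x \Bigr)^{1/q} \Bigl( \fint_{\frac32 Q} \omega' \diff x \Bigr)^{1/q'} \leq [\omega]_{A_q}^{1/q}\, \| g \|_{L^q(\omega,\Omega)},
\]
the last inequality being exactly the definition of the $A_q$ characteristic (with cubes in place of balls, which only changes the constant). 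Combining the pieces yields $\| g_1 \|_{L^q(\omega,\Omega)} \lesssim \| g \|_{L^q(\omega,\Omega)}$ and then $\| g_2 \|_{L^q(\omega,\Omega)} \leq \| g \|_{L^q(\omega,\Omega)} + \| g_1 \|_{L^q(\omega,\Omega)} \lesssim \| g \|_{L^q(\omega,\Omega)}$, with a constant depending only on $q$ and $[\omega]_{A_q}$, hence independent of $g$ and $Q$.

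The only genuine obstacle is the bound on the mean--correction term $c\,\eta$; everything else is bookkeeping with supports and averages. That obstacle is resolved precisely by the Muckenhoupt balance of $\omega$ and $\omega'$ over $\tfrac32 Q$, which is exactly why the hypothesis $\omega \in A_q$ is both needed and sufficient here.
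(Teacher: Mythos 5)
Your proposal is correct and follows essentially the same route as the paper: the same cutoff splitting $\zeta g + (1-\zeta)g$, the same mean correction by a normalized bump $\chi_A/|A|$ on the annulus $A=\tfrac32 Q\setminus Q$ (your $c\,\eta$ is exactly the paper's $-\tfrac{\chi_A}{|A|}\int_{\frac32 Q}\zeta g\diff x$), and the same H\"older--plus--$A_q$ estimate for the correction term.
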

\begin{proof}
To simplify notation let us set $D= \tfrac32 Q$ and $A = D \setminus Q$. Notice also that
\[
  |D| = \left(\frac32\right)^3 |Q|, \qquad |A| = |D| - |Q| = \left[ \left(\frac32\right)^3 -1 \right]|Q|,
\]
so that $|D|\approx|A|\approx|Q|$.

Let now $\phi \in C_0^\infty(\R^3)$ be such that $0 \leq \phi \leq 1$, $\phi \equiv 1$ on $Q$ and $\phi \equiv 0$ on $D^c$. Set 
\[
  g = \phi g + (1-\phi)g =: \tilde g_1 + \tilde g_2.
\]
Note that the functions $\tilde g_i$, for $i=1,2$, have the requisite support property. In addition,
\[
  \| \tilde g_1 \|_{L^q(\omega,\AJS{D})} \leq \| \phi \|_{L^\infty(\AJS{\R^3})} \| g \|_{L^q(\omega,\AJS{D})}, \quad
  \| \tilde g_2 \|_{L^q(\omega,\Omega)} \leq \| 1-\phi \|_{L^\infty(\AJS{\R^3})} \| g \|_{L^q(\omega,\Omega)}.
\]
The functions $\tilde g_1, \tilde g_2$, however, do not integrate to zero. Thus, we correct them as follows. Define
\[
  g_1 = \tilde g_1 - \frac{\chi_A}{|A|} \int_{D} \tilde g_1 \diff x.
\]
Then, we have that $\supp g_1 \subseteq \supp \tilde g_1 \cup A \subset D$ and, moreover,
\[
  \int_\Omega g_1 \diff x = \int_\Omega \tilde g_1 \diff x - \int_{D} \tilde g_1 \diff x = 0.
\]
Using that
\begin{equation}
\label{eq:avgtoweightLq}
   \left| \int_{D} \tilde g_1 \diff x \right| \leq \left( \int_D \omega^{-\frac{q'}{q}} \diff x \right)^{{\frac{1}{q'}}} \| \tilde g_1 \|_{L^q(\omega,\AJS{D})} \leq 
   \left( \int_D \omega^{-\frac{q'}{q}} \diff x \right)^{{\frac{1}{q'}}} \| g\|_{L^q(\omega,\AJS{D})},
\end{equation}
we are able to obtain the estimates
\begin{align*}
  \| g_1 \|_{L^q(\omega,\AJS{D})} &\leq \| \tilde g_1 \|_{L^q(\omega,\AJS{D})} 
  + 
  \frac{ 1}{|A|} \left(\int_A \omega\diff x\right)^{1/q} \left| \int_{D} \tilde g_1 \diff x \right| 
  \\
  &\leq \left[ 1 + \frac1{|A|} \left[ \left( \int_A \omega \diff x \right) \left( \int_D\omega^{-\frac{q'}{q}} \diff x \right)^{q-1} \right]^{1/q} \right] \| g \|_{L^q(\omega,\AJS{D})}.
\end{align*}
Now, since
\begin{align*}
  \frac1{|A|} \left[  \left(\int_A \omega \diff x\right) \left( \int_D\omega^{-\frac{q'}{q}} \diff x \right)^{q-1} \right]^{\frac{1}{q}}
  &\leq
  \frac{|D|}{|A|} \left[ \left(\fint_D\omega \diff x\right) \left( \fint_D\omega^{-\frac{q'}{q}} \diff x \right)^{q-1} \right]^{\frac{1}{q}} \\
  &\lesssim [\omega]_{A_q}^{\frac{1}{q}},
\end{align*}
we obtain \AJS{the local estimate}
$
  \| g_1 \|_{L^q(\omega,\AJS{D})} \lesssim \| g \|_{L^q(\omega,\AJS{D})},
$
with a constant that only depends on $[\omega]_{A_q}$.

Note now that the function
\[
  g_2 = \tilde g_2 + \frac{\chi_A}{|A|} \int_{D} \tilde g_1 \diff x
\]
satisfies $\supp g_2 \subseteq \supp \tilde g_2 \cup A \subset Q^c$, $g_1 + g_2 = \tilde g_1 + \tilde g_2 = g$, and, since $g \in \cL^q(\omega,\Omega)$,
\[
  \int_\Omega g_2 \diff x = \int_\Omega (g - g_1) \diff x = 0.
\]
Finally, using \eqref{eq:avgtoweightLq}, we have
\[
  \| g_2 \|_{L^q(\omega,\Omega)} \leq \| \tilde g_2 \|_{L^q(\omega,\Omega)} + \left| \int_{D} \tilde g_1 \diff x \right| \frac{ 1 }{|A|} \left(\int_A \omega\diff x\right)^{1/q} \lesssim \| g \|_{L^q(\omega,\Omega)},
\]
where the hidden constant only depends on $[\omega]_{A_q}$.

This concludes the proof.
\end{proof}

With this decomposition at hand, we can obtain an a priori estimate on the oscillation of the gradient of $\bu$, much as in \cite[Lemma 2.4]{DDO:17} and \cite[Lemma 7.9]{MR1800316}. 

\begin{lemma}[oscillation estimate]
\label{lem:lem24Duran}
Let $\Omega \subset \R^3$ be a convex polyhedron, $q \in (1,\infty)$, $\omega \in A_q$, $\bef \in \bL^q(\omega,\Omega)$, and $g \in \cL^q(\omega,\Omega)$. Let $\bu$ be the velocity component of the solution of \eqref{eq:Stokes} with $\bF = - \DIV \bef$. Then, for any $s>1$ and $z \in \Omega$, we have that
\begin{equation}
  \calM^\sharp_\Omega\left[|\GRAD \bu|\right](z) \lesssim \calM\left[ |\bef|^s \right](z)^{1/s} + \calM\left[|g|^s\right](z)^{1/s},
  \label{eq:bound_osc_velocity}
\end{equation}
where the hidden constant is independent of $\bef$, $g$, and $z$.
\end{lemma}
\begin{proof}
Let $Q$ be a cube \EO{with sides parallel to the coordinate axes and} center in $z$ such that $\frac32Q^\star \subset \Omega$, where $Q^\star = 2Q$. Extend $\bef$ and $g$ to zero outside $\Omega$ and decompose $\bef = \bef_1 + \bef_2$, with $\bef_1 = \bef\chi_{Q^\star}$, and $g= g_1 + g_2$ with $g_i$, $i=1,2$, as in Lemma~\ref{lem:Bogovskiidecompose} but with $Q$ replaced by $Q^\star$. Let now $\bu^i$ be the velocity component of the solution to \eqref{eq:Stokes} with data $(-\DIV \bef_i,g_i)$. 
\EO{To obtain \eqref{eq:bound_osc_velocity},} it thus suffices 
to bound the oscillation of $\partial_{x_k} \bu_j$ for all $k,j \in \{1,2,3\}$. Fix $j,k \in \{ 1,2,3\}$ and set $v = \bu_j$. With this notation, for $i \in \{1,2\}$, we have $\partial_{x_k} v^i = \partial_{x_k} \bu_j^i$.

To estimate $\calM_\Omega^\sharp[\partial_{x_k} v ]$ we follow \eqref{eq:average_does_not_matter} and bound the average of the difference between $\partial_{x_k} v$ and any constant. Thus, we have
\begin{align*}
  \fint_Q |\partial_{x_k} v(x) - \partial_{x_k} v^2(z) |\diff x &\leq \fint_Q |\partial_{x_k} v^1(x)|\diff x + \fint_Q |\partial_{x_k} v^2(x) - \partial_{x_k} v^2(z) |\diff x \\
  & =: \textup{I} + \textup{II}.
\end{align*}
We bound each of the terms separately.

First, by H\"older's inequality, for any $s>1$, we have
\[
  \textup{I} \leq \left( \fint_Q |\partial_{x_k} v^1(x)|^s \diff x \right)^{\frac{1}{s}} \AJS{\leq \frac1{|Q|^{1/s}} \| \partial_{x_k} v^1 \|_{L^s(\Omega)} \lesssim \frac1{|Q|^{1/s}} \| \vare(\bu^1) \|_{\bL^s(\Omega)},}
\]
\AJS{where, in the last step, we used Korn's inequality \eqref{eq:WeightedKorn} with $q=s$ and $\omega \equiv 1$. We now apply the unweighted estimate given in Theorem~\ref{thm:WPLq} to obtain 
\[
  \textup{I} \lesssim \frac1{|Q|^{1/s}} \left( \| \bef_1 \|_{\bL^s(Q^\star)} + \| g_1 \|_{L^s(\frac32 Q^\star)} \right).
\]
\EO{To obtain the previous estimate, we have also used that $\supp g_1 \subset \frac32 Q^\star$ and that $\bef_1$ vanishes outside $Q^\star$. Now, in view of the fact that} $|Q| \approx |Q^\star|$, $\bef_1 = \bef \chi_{Q^\star}$, and that the norm estimate on $g_1$ is \emph{local}, we finally arrive at
\[
  \textup{I} \lesssim \left( \fint_{Q^\star} |\bef(x)|^s \diff x \right)^{\frac{1}{s}} + \left( \fint_{\frac32 Q^\star} |g(x)|^s \diff x \right)^{\frac{1}{s}} \leq \calM\left[ |\bef|^s \right](z)^{\frac{1}{s}} + \calM\left[|g|^s\right](z)^{\frac{1}{s}}.
\]}

To bound $\textup{II}$ we observe that, since $x,z \notin \supp \bef_2 \cup \supp g_2$, it is legitimate to differentiate the pointwise representation of $v^2$ given in \eqref{eq:newptwise}. Consequently, we get
\begin{align*}
  \textup{II} &\leq \frac1\mu \fint_Q \int_{\Omega \cap (Q^\star)^c} \left|\partial_{x_k} \GRAD_y\bG_j(\EO{y,x}) - \partial_{x_k} \GRAD_y\bG_j(\EO{y,z})\right| |\bef_2(y)| \diff y \diff x \\ 
    &+ \fint_Q \int_{\Omega \cap (Q^\star)^c} \left|\partial_{x_k} \lambda_j(\EO{y,x}) - \partial_{x_k} \lambda_j(\EO{y,z})\right| |g_2(y)| \diff y \diff x =: \textup{II}_1 + \textup{II}_2.
\end{align*}
Now, \AJS{since $\bef_2 = \bef$ in $\Omega \cap (Q^\star)^c$, we can use} Theorem~\ref{thm:HolderGreen} with $|\alpha|=|\beta|=1$ and $i,j \in \{1,2,3\}$ \AJS{to} conclude that
\begin{align*}
  \textup{II}_1 &\lesssim \fint_Q \int_{(Q^\star)^c}|x-z|^\sigma \left( |x-y|^{-3-\sigma} + |z-y|^{-3-\sigma} \right) |\bef(y)|\diff y \diff x \\
    &\lesssim \frac{\ell(Q)^\sigma}{|Q|}\int_Q \int_{(Q^\star)^c} \frac{|\bef(y)|}{|z-y|^{3+\sigma}} \diff y \diff x \lesssim \calM\left[|\bef|\right](z),
\end{align*}
where, in the last two steps, we argued as in the proof of \cite[Lemma 2.4]{DDO:17}.

Similarly, we can use Theorem~\ref{thm:HolderGreen} with $|\alpha|=1$, $\beta = 0$, \EO{$i=4$, and $j \in \{1,2,3\}$} to assert that
\[
  \textup{II}_2 \lesssim \fint_Q \int_{(Q^\star)^c}|x-z|^\sigma \left( |x-y|^{-3-\sigma} + |z-y|^{-3-\sigma} \right) |\AJS{g_2}(y)|\diff y \diff x \lesssim \calM\left[|\AJS{g_2}|\right](z),
\]
where the argument, once again, follows the proof of \cite[Lemma 2.4]{DDO:17}. \EO{Let $R$ be a cube with center in $z$ and $x \in R$.} \AJS{We follow the notation of Lemma~\ref{lem:Bogovskiidecompose} and observe that we have the pointwise estimate
\[
  |g_2(x)| \leq |g(x)| + \frac{\chi_{A^\star}(x)}{|A^\star|}\int_{D^\star} |g(\xi)| \diff \xi \lesssim |g(x)| + \left(\fint_{D^\star} |g(\xi)| \diff \xi \right) \chi_{A^\star}(x),
\]
where we used that $|A^\star|\approx |D^\star|$. The sublinearity of the maximal function then implies that
\begin{align*}
  \textup{II}_2 &\lesssim \calM\left[|g|\right](z) + \left(\fint_{D^\star} |g(\xi)| \diff \xi \right) \calM\left[\chi_{A^\star}\right](z) \leq \calM\left[|g|\right](z) \left( 1+ \calM\left[\chi_{A^\star}\right](z) \right) \\
    &\lesssim \calM\left[|g|\right](z),
\end{align*}
where in the last step we used that the maximal function is nonexpansive in $L^\infty(\R^3)$} \EO{i.e., $\| \calM f \|_{L^{\infty} (\mathbb{R}^3)} \leq \| f \|_{L^{\infty}(\mathbb{R}^3)}$ for every $f \in L^1_{\mathrm{loc}}(\mathbb{R}^3)$ \cite[\EO{Section 2.1}, page 86]{MR3243734}.}

Conclude by noticing that for every $s>1$, by H\"older's inequality, we have that $\calM[|w|](z) \leq \calM[|w|^s](z)^{1/s}$.
\end{proof}

The weighted a priori estimate of the velocity component of the solution to \eqref{eq:Stokes} is the content of the following result.

\begin{theorem}[velocity estimate]
\label{thm:velest}
Let $\Omega \subset \R^3$ be a convex polyhedron, $q \in (1,\infty)$, $\omega \in A_q$, $\bef \in \bL^q(\omega,\Omega)$, and $g \in \cL^q(\omega,\Omega)$. Let $\bu$ be the velocity component of the solution of \eqref{eq:Stokes} with $\bF = -\DIV \bef$. Then, we have that
\[
  \| \GRAD \bu \|_{\bL^q(\omega,\Omega)} \lesssim \| \bef \|_{\bL^q(\omega,\Omega)} + \| g \|_{L^q(\omega,\Omega)},
\]
where the hidden constant is independent of $\bef$ and $g$ and depends on $\omega$ only through $[\omega]_{A_q}$.
\end{theorem}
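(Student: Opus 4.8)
The plan is to combine the oscillation estimate of Lemma~\ref{lem:lem24Duran} with the Fefferman--Stein sharp maximal inequality in weighted $L^q$ spaces. First I would recall that, since $\omega \in A_q$ with $q \in (1,\infty)$, there exists $s > 1$ with $s < q$ and $\omega \in A_{q/s}$; equivalently, one may use the fact that $A_q = \bigcup_{1<r<q} A_r$ together with a reverse-H\"older argument, or simply invoke the self-improving property of Muckenhoupt weights. This choice of $s$ is the key parameter: it makes the sublinear operator $w \mapsto \calM[|w|^s]^{1/s}$ bounded on $L^q(\omega,\Omega)$, because $\calM$ is bounded on $L^{q/s}(\omega,\Omega)$ by \cite[Theorem 7.3]{MR1800316}, the weighted Hardy--Littlewood maximal theorem.

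Next I would invoke the weighted Fefferman--Stein inequality: for $\omega \in A_\infty$ (in particular for $\omega \in A_q$) and $q \in (1,\infty)$,
\[
  \| f \|_{L^q(\omega,\Omega)} \lesssim \| \calM^\sharp_\Omega f \|_{L^q(\omega,\Omega)},
\]
valid whenever the left-hand side is finite, with constant depending on $\omega$ only through $[\omega]_{A_q}$ (see \cite[Chapter 6]{MR1800316}, or the version adapted to bounded Lipschitz domains). Applying this with $f = |\GRAD \bu|$, and then Lemma~\ref{lem:lem24Duran} followed by the boundedness of $\calM[|\cdot|^s]^{1/s}$ on $L^q(\omega,\Omega)$, yields
\[
  \| \GRAD \bu \|_{\bL^q(\omega,\Omega)} \lesssim \| \calM^\sharp_\Omega[|\GRAD \bu|] \|_{L^q(\omega,\Omega)}
  \lesssim \| \calM[|\bef|^s]^{1/s} \|_{L^q(\omega,\Omega)} + \| \calM[|g|^s]^{1/s} \|_{L^q(\omega,\Omega)}
  \lesssim \| \bef \|_{\bL^q(\omega,\Omega)} + \| g \|_{L^q(\omega,\Omega)},
\]
which is the claimed bound, with the constant tracked through $[\omega]_{A_q}$ via the maximal-function constants.

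The main obstacle is justifying the \emph{a priori finiteness} of $\| \GRAD \bu \|_{\bL^q(\omega,\Omega)}$, which is needed before the Fefferman--Stein inequality can legitimately be applied (that inequality is an a priori estimate, not an existence statement). I would handle this by the density reduction already set up before Lemma~\ref{lem:Bogovskiidecompose}: assume $\bef \in \bC_0^\infty(\Omega)$ and, likewise, approximate $g$ by smooth functions with zero average; for such smooth data the unweighted theory (Theorem~\ref{thm:WPLq}) and interior/boundary regularity show $\GRAD \bu \in \bL^\infty_{\mathrm{loc}}$ and $\GRAD \bu \in \bL^p(\Omega)$ for every $p<\infty$, so that $\| \GRAD \bu\|_{\bL^q(\omega,\Omega)} < \infty$ since $\Omega$ is bounded and $\omega \in L^1(\Omega)$. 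One must also check that $\calM^\sharp_\Omega[|\GRAD \bu|] \in L^q(\omega,\Omega)$, but this follows once Lemma~\ref{lem:lem24Duran} and the maximal bounds give a finite right-hand side for smooth data. A secondary technical point is that the sharp maximal function and Fefferman--Stein inequality here are taken \emph{relative to the domain} $\Omega$ (cubes $Q \subset \Omega$), so I would either cite the localized version or extend $\GRAD\bu$ suitably and argue that the contribution near $\partial\Omega$ is controlled; since $\Omega$ is a bounded Lipschitz (indeed polyhedral) domain this is standard. Finally, removing the smoothness assumption on the data is done by the density of $\bC_0^\infty(\Omega)$ in $\bL^q(\omega,\Omega)$ and of smooth zero-average functions in $\cL^q(\omega,\Omega)$, together with the linearity of the solution map and the just-proved a priori bound, which passes to the limit.
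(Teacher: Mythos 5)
Your overall strategy coincides with the paper's: bound the oscillation via Lemma~\ref{lem:lem24Duran}, pick $s>1$ with $\omega\in A_{q/s}$ by the open--ended property, and use the weighted boundedness of $\calM$ to absorb $\calM[|\cdot|^s]^{1/s}$. However, there is a genuine gap in the first step. The inequality you invoke,
\[
  \| f \|_{L^q(\omega,\Omega)} \lesssim \| \calM^\sharp_\Omega f \|_{L^q(\omega,\Omega)} \quad \text{``whenever the left--hand side is finite,''}
\]
is false on a bounded domain: take $f\equiv c\neq 0$, for which the left--hand side is $|c|\,\omega(\Omega)^{1/q}>0$ while $\calM^\sharp_\Omega f\equiv 0$. (The global Fefferman--Stein inequality on $\R^3$ avoids this only because a nonzero constant fails the hypothesis $\calM f\in L^{p_0}$; on $\Omega$ that escape route is unavailable, and extending $\GRAD\bu$ by zero does not help because Lemma~\ref{lem:lem24Duran} only controls the oscillation over cubes well inside $\Omega$.) The correct local version, which is what \cite[Lemma 7.9]{MR1800316} and \cite{DDO:17} provide, is
\[
  \| f - f_\Omega \|_{L^q(\omega,\Omega)} \lesssim \| \calM^\sharp_\Omega f \|_{L^q(\omega,\Omega)}, \qquad f_\Omega = \fint_\Omega f \diff x .
\]
Consequently your argument leaves the term $\| (\GRAD\bu)_\Omega \|_{\bL^q(\omega,\Omega)}$ uncontrolled, and this term is not negligible: it carries exactly the information that $\bu$ itself, and not just its oscillation, is small.

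The paper closes this gap with a separate estimate for the average. Using the same $s$ for which $\omega\in A_{q/s}$, the unweighted well--posedness of Theorem~\ref{thm:WPLq} in $L^s$ gives
\[
  |(\GRAD \bu)_\Omega| \leq \Bigl( \fint_\Omega |\GRAD \bu|^s \diff x \Bigr)^{1/s} \lesssim \Bigl( \fint_{\Omega} |\bef|^s \diff x \Bigr)^{1/s} + \Bigl( \fint_\Omega |g|^s \diff x \Bigr)^{1/s},
\]
and then H\"older's inequality with exponents $q/s$ and $(q/s)'$ converts the unweighted $L^s$ averages of the data into their weighted $L^q(\omega)$ norms, the resulting factor $\bigl(\fint_\Omega \omega\diff x\bigr)\bigl(\fint_\Omega\omega^{s/(s-q)}\diff x\bigr)^{q/s-1}$ being controlled by $[\omega]_{A_{q/s}}$. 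You need to add this step. Your remaining points (reduction to $\bef\in\bC_0^\infty(\Omega)$ by density so that the representation formula and a priori finiteness are available) are sound and consistent with the paper's setup.
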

\begin{proof}
We argue as in \cite[Theorem 2.5]{DDO:17}. Let $(\GRAD \bu)_\Omega = \fint_\Omega \GRAD \bu \diff x$. Then,
\[
  \| \GRAD \bu \|_{\bL^q(\omega,\Omega)} \leq \| \GRAD \bu - (\GRAD \bu)_\Omega\|_{\bL^q(\omega,\Omega)} + \| (\GRAD \bu)_\Omega \|_{\bL^q(\omega,\Omega)} =: \textup{I} + \textup{II}.
\]

\EO{To bound the term $\textup{I}$, we first utilize the estimate in \cite[Theorem 5.15]{MR2643399} and then} the bound from Lemma~\ref{lem:lem24Duran} to obtain
\[
  \textup{I} 
\lesssim
  \left\| \calM_\Omega^\sharp\left[|\GRAD \bu | \right] \right\|_{\bL^q(\omega,\Omega)} \lesssim \| \calM[|\bef|^s]^{1/s} \|_{\bL^q(\omega,\Omega)} + \| \calM[|g|^s]^{1/s} \|_{L^q(\omega,\Omega)}.
\]
Now, using the so-called open ended property of Muckenhoupt weights \cite[Corollary 1.2.17]{MR1774162}, we have that there is $s>1$ such that $\omega \in A_q$ implies $\omega \in A_{q/s}$. Thus, using the boundedness of $\calM$ over Muckenhoupt weighted spaces we obtain
\[
  \| \calM[|g|^s]^{1/s} \|_{L^q(\omega,\Omega)} = \| \calM[|g|^s] \|_{L^{q/s}(\omega,\Omega)}^{1/s} \lesssim \| |g|^s \|_{L^{q/s}(\omega,\Omega)}^{1/s} = 
  \| g \|_{L^q(\omega,\Omega)};
\]
a similar estimate holds for $\| \calM[|\bef|^s]^{1/s} \|_{\bL^q(\omega,\Omega)}$. In conclusion
\[
  \textup{I}\lesssim \| \bef \|_{\bL^q(\omega,\Omega)} + \| g \|_{L^q(\omega,\Omega)}.
\]

To bound $\textup{II}$ we use the unweighted estimate of Theorem~\ref{thm:WPLq} to obtain
\[
  |(\GRAD \bu)_\Omega| \leq \left( \fint_\Omega |\GRAD \bu|^s \diff x \right)^{1/s} \lesssim \left( \fint_{\Omega} |\bef|^s \diff x \right)^{1/s} + \left( \fint_\Omega |g|^s \diff x \right)^{1/s}.
\]
Now, by H\"older's inequality,
\[
  \left( \fint_\Omega |g|^s \diff x \right)^{1/s} \leq \left( \fint_\Omega |g|^q \omega \diff x \right)^{1/q} \left( \fint_\Omega \omega^{s/(s-q)} \diff x\right)^{1/s-1/q},
\]
with a similar estimate for $\left( \fint_{\Omega} |\bef|^s \diff x \right)^{1/s}$. We can thus obtain the estimate 
\begin{align*}
  \int_\Omega \omega |(\GRAD \bu)_\Omega|^q \diff x &\lesssim \left( \fint_\Omega \omega^{s/(s-q)} \diff x\right)^{q/s-1} \left(\fint_\Omega \omega \diff x\right)\left( \| \bef \|_{\bL^q(\omega,\Omega)}^q + \| g\|_{L^q(\omega,\Omega)}^q \right) \\
  & \lesssim [\omega]_{A_{q/s}} \left( \| \bef \|_{\bL^q(\omega,\Omega)}^q + \| g\|_{L^q(\omega,\Omega)}^q \right).
\end{align*}

The theorem is thus proved.
\end{proof}

We now obtain an a priori estimate on the pressure.

\begin{corollary}[pressure estimate]
\label{col:presest}
In the setting of Theorem~\ref{thm:velest}, if $\pe$ is the pressure component of the solution to \eqref{eq:Stokes}, then we have
\[
  \| \pe \|_{L^q(\omega,\Omega)} \lesssim \| \bef \|_{\bL^q(\omega,\Omega)} + \| g \|_{L^q(\omega,\Omega)},
\]
where the hidden constant is independent of $\bef$ and $g$ and depends on $\omega$ only through $[\omega]_{A_q}$.
\end{corollary}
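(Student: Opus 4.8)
The plan is to obtain the pressure bound directly from the velocity bound of Theorem~\ref{thm:velest} by means of the weighted inf--sup condition \eqref{eq:infsuppres}. Since $\pe \in \cL^q(\omega,\Omega)$ and, as recalled in section~\ref{sec:Prelim}, membership $\omega \in A_q$ is equivalent to $\omega' = \omega^{1/(1-q)} \in A_{q'}$, the estimate \eqref{eq:infsuppres} is available and gives
\[
  \| \pe \|_{L^q(\omega,\Omega)} \lesssim \sup_{\boldsymbol0 \neq \bv \in \bW^{1,q'}_0(\omega',\Omega)} \frac{ \int_\Omega \pe \DIV \bv \diff x}{ \| \GRAD \bv \|_{\bL^{q'}(\omega',\Omega)}}.
\]
It therefore suffices to bound the numerator for an arbitrary test field $\bv$.

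To do so I would test the weak formulation of \eqref{eq:Stokes} with $\bF = -\DIV \bef$ against $\bv \in \bW^{1,q'}_0(\omega',\Omega)$; using the symmetry of $\vare(\bu)$ this yields
\[
  \int_\Omega \pe \DIV \bv \diff x = 2\mu \int_\Omega \vare(\bu) : \vare(\bv) \diff x - \langle -\DIV \bef, \bv \rangle .
\]
Every term here is finite because of the weighted H\"older inequality $\int_\Omega |fg| \diff x \leq \| f \|_{L^q(\omega,\Omega)} \| g \|_{L^{q'}(\omega',\Omega)}$, which is immediate from the pointwise identity $\omega^{1/q}\,(\omega')^{1/q'} = 1$; indeed this inequality gives $|\langle -\DIV \bef, \bv \rangle| \leq \| \bef \|_{\bL^q(\omega,\Omega)} \| \GRAD \bv \|_{\bL^{q'}(\omega',\Omega)}$ and, together with $|\vare(\bu)| \leq |\GRAD \bu|$ and $|\vare(\bv)| \leq |\GRAD \bv|$, also controls the symmetric-gradient term by $\| \GRAD \bu \|_{\bL^q(\omega,\Omega)} \| \GRAD \bv \|_{\bL^{q'}(\omega',\Omega)}$. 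Dividing by $\| \GRAD \bv \|_{\bL^{q'}(\omega',\Omega)}$, taking the supremum over $\bv$, and finally invoking Theorem~\ref{thm:velest} to absorb $\| \GRAD \bu \|_{\bL^q(\omega,\Omega)}$ into $\| \bef \|_{\bL^q(\omega,\Omega)} + \| g \|_{L^q(\omega,\Omega)}$ produces the claimed estimate. The dependence on the weight enters only through $[\omega]_{A_q}$, since the same is true of the constants in \eqref{eq:infsuppres} and in Theorem~\ref{thm:velest}.

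There is essentially no serious obstacle here: once \eqref{eq:infsuppres} is in place the corollary is a one-line consequence of the velocity estimate. The only point deserving a word of care is that one must know $(\bu,\pe)$ satisfies the weak formulation when tested against the \emph{full} weighted space $\bW^{1,q'}_0(\omega',\Omega)$, and not merely against smooth or unweighted test functions. As in the proof of Theorem~\ref{thm:velest}, I would first reduce to $\bef \in \bC_0^\infty(\Omega)$, for which this is clear, and then pass to the limit using the a priori bounds already established; the same limiting argument simultaneously shows the asserted estimate for general $\bef \in \bL^q(\omega,\Omega)$.
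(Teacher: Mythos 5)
Your argument is correct and coincides with the paper's own proof: both derive the pressure bound by combining the weighted inf--sup condition \eqref{eq:infsuppres} with the weak formulation of \eqref{eq:Stokes} tested against $\bv \in \bW^{1,q'}_0(\omega',\Omega)$, then absorb $\| \GRAD \bu \|_{\bL^q(\omega,\Omega)}$ via Theorem~\ref{thm:velest}. The extra remarks on weighted H\"older and on reducing to $\bef \in \bC_0^\infty(\Omega)$ by density are consistent with how the paper sets up section~\ref{sec:Apweights}.
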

\begin{proof}
The proof is a simple application of the the inf-sup condition \eqref{eq:infsuppres}. Indeed, using that $( \bu,\pe)$ solves \eqref{eq:Stokes} and the conclusion of Theorem~\ref{thm:velest} we obtain
\begin{align*}
  \| \pe \|_{L^q(\omega,\Omega)} &\lesssim \sup_{\boldsymbol0 \neq \bv \in \bW^{1,q'}(\omega',\Omega)} \frac1{\|\GRAD \bv\|_{\bL^{q'}(\omega',\Omega)}} \int_\Omega \left( \bef:\GRAD \bv   - \AJS{2} \mu \vare(\bu):\vare(\bv) \right)\diff x \\ &\lesssim \| \bef \|_{\bL^q(\omega,\Omega)} + \| g \|_{L^q(\omega,\Omega)},
\end{align*}
as we intended to show.
\end{proof}

%

We conclude with a corollary regarding the stability of the Stokes projection on weighted spaces. In doing so, we will remove some of the assumptions used in \cite[Theorem 4.1]{DOS:19}. Namely, we no longer have a lower bound on the integrability index and we do not require good behavior of the weight near the boundary.

\begin{corollary}[stability]
\label{cor:stabStokesProj}
Let $\Omega \subset \R^3$ be a convex polyhedron. Assume that either:
\begin{enumerate}[$\bullet$]
  \item $q\in [2,\infty)$ and $\omega \in A_{q/2}$,
  \item $q \in (1,2]$ and $\omega' \in A_{q'/2}$.
\end{enumerate}
Then, the Stokes projection defined in \eqref{eq:defofStokesprojection} is stable on $\bW^{1,q}_0(\omega,\Omega) \times \cL^q(\omega,\Omega)$.
\end{corollary}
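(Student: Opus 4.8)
The plan is to mimic the classical duality argument used to prove stability of the Stokes projection (as in \cite[Theorem 4.1]{DOS:19} and Theorem~\ref{thm:stabclassic}), but to replace every appeal to unweighted or restricted-weight well-posedness of the \emph{continuous} Stokes problem by the newly established weighted well-posedness, namely Theorem~\ref{thm:velest} and Corollary~\ref{col:presest}. Concretely, suppose first that $q\in[2,\infty)$ and $\omega\in A_{q/2}$. Given $(\bu,\pe)\in\bW^{1,q}_0(\omega,\Omega)\times\cL^q(\omega,\Omega)$ with $\DIV\bu=0$, let $(\bu_h,\pe_h)$ be its Stokes projection. To estimate $\|\GRAD\bu_h\|_{\bL^q(\omega,\Omega)}$ I would introduce the dual problem: for a generic $\bG\in\bL^{q'}(\omega',\Omega)$, let $(\bvphi,\psi)\in\bW^{1,q'}_0(\omega',\Omega)\times\cL^{q'}(\omega',\Omega)$ solve \eqref{eq:Stokes} with $\bF=-\DIV\bG$ and $g=0$. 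Since $q\ge 2$ gives $q'\le 2$, and since $\omega\in A_{q/2}$ is equivalent to $\omega'\in A_{(q/2)'}$, one checks that the dual weight $\omega'$ lies in the Muckenhoupt class required to apply Theorem~\ref{thm:velest} and Corollary~\ref{col:presest} to this dual problem, yielding $\|\GRAD\bvphi\|_{\bL^{q'}(\omega',\Omega)}+\|\psi\|_{L^{q'}(\omega',\Omega)}\lesssim\|\bG\|_{\bL^{q'}(\omega',\Omega)}$. Testing the dual equation with $\bu-\bu_h$ and using the Galerkin orthogonality \eqref{eq:defofStokesprojection}, together with the quasi-optimality of the Fortin operator $\Fortin$ applied to $\bvphi$, one converts the duality pairing into $\GRAD(\bu-\bu_h):\GRAD(\bvphi-\Fortin\bvphi)$ type terms that are controlled by $\|\GRAD(\bu-\Fortin\bu)\|$ and hence by $\|\GRAD\bu\|_{\bL^q(\omega,\Omega)}$. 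Taking the supremum over $\bG$ recovers $\|\GRAD\bu_h\|_{\bL^q(\omega,\Omega)}\lesssim\|\GRAD\bu\|_{\bL^q(\omega,\Omega)}+\|\pe\|_{L^q(\omega,\Omega)}$.

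Next I would bound the discrete pressure. Here the natural tool is the discrete inf--sup condition \eqref{eq:infsuph}: for $p_h\in M_h$ one has $\|p_h\|_{L^q(\omega,\Omega)}\lesssim\sup_{\bv_h\in\bX_h}\bigl(\int_\Omega p_h\DIV\bv_h\bigr)/\|\GRAD\bv_h\|_{\bL^{q'}(\omega',\Omega)}$ — again, the hypothesis on $\omega$ ensures this holds with the correct dual exponent, since \eqref{eq:infsuph} was assumed for \emph{all} $q\in(1,\infty)$ and $\omega\in A_q$, and the pairing here is between $L^q(\omega)$ and $\bW^{1,q'}_0(\omega',\Omega)$. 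Using the first equation of \eqref{eq:defofStokesprojection} one writes $\int_\Omega\pe_h\DIV\bv_h=\int_\Omega\pe\DIV\bv_h+\mu\int_\Omega\vare(\bu-\bu_h):\vare(\bv_h)$, and both terms are bounded by $(\|\pe\|_{L^q(\omega,\Omega)}+\|\GRAD\bu\|_{\bL^q(\omega,\Omega)}+\|\GRAD\bu_h\|_{\bL^q(\omega,\Omega)})\|\GRAD\bv_h\|_{\bL^{q'}(\omega',\Omega)}$; inserting the already-proved velocity bound closes the estimate. Finally, the case $q\in(1,2]$ with $\omega'\in A_{q'/2}$ follows by the symmetry of the Stokes projection: the bilinear form defining \eqref{eq:defofStokesprojection} is symmetric, so the projection for the pair $(\bW^{1,q}_0(\omega,\Omega),\cL^q(\omega,\Omega))$ is, in the duality sense, adjoint to the one for $(\bW^{1,q'}_0(\omega',\Omega),\cL^{q'}(\omega',\Omega))$, and the latter falls into the first case since $q'\ge 2$ and $\omega'\in A_{q'/2}$. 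Thus stability in one range is equivalent to stability in the conjugate range.

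The main obstacle I anticipate is bookkeeping with the weight classes and exponents, rather than any deep new idea: one must verify carefully that when $q\ge2$ and $\omega\in A_{q/2}$ the dual data $\bG\in\bL^{q'}(\omega',\Omega)$ indeed feeds into a continuous Stokes problem to which Theorem~\ref{thm:velest} applies with a Muckenhoupt weight (the relevant check is $\omega'\in A_{q'}$, which follows from $\omega\in A_q$ via $\omega\in A_{q/2}\subset A_q$ and conjugacy, and additionally that the quantitative constant depends only on $[\omega']_{A_{q'}}$). A second, more technical point is ensuring that the Fortin operator $\Fortin$ is available and stable on the \emph{dual} weighted space $\bW^{1,q'}_0(\omega',\Omega)$; this is guaranteed precisely because \eqref{eq:infsuph} was postulated for all $q\in(1,\infty)$ and all $\omega\in A_q$, so the equivalence with a Fortin operator recorded after \eqref{eq:infsuph} applies with the roles of $(q,\omega)$ and $(q',\omega')$ interchanged. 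Once these compatibility facts are in place, the argument is a routine variant of the unweighted duality proof, and no regularity of the continuous solution beyond what Theorem~\ref{thm:velest} and Corollary~\ref{col:presest} provide is needed — in particular we do \emph{not} need the $\bW^{2,2}$ regularity of Proposition~\ref{prop:regular}, which is what allows us to drop the lower bound on $q$ and the boundary hypothesis on $\omega$.
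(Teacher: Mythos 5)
Your plan has a genuine circularity at its core. The duality argument you describe for the case $q\in[2,\infty)$ cannot be closed using only the continuous weighted well--posedness (Theorem~\ref{thm:velest}, Corollary~\ref{col:presest}) and the quasi--optimality of the Fortin operator. Write out the computation: after testing the dual problem with $\bu_h$ (or with $\bu-\bu_h$) and using Galerkin orthogonality with $\Fortin\bvphi$ as the discrete test function, you are left with a cross term of the form $\mu\int_\Omega \vare(\bvphi-\Fortin\bvphi):\vare(\bu_h)\diff x$, whose only available bound is
\[
\mu\,\|\vare(\bvphi-\Fortin\bvphi)\|_{\bL^{q'}(\omega',\Omega)}\,\|\vare(\bu_h)\|_{\bL^q(\omega,\Omega)}\lesssim \|\bG\|_{\bL^{q'}(\omega',\Omega)}\,\|\GRAD\bu_h\|_{\bL^q(\omega,\Omega)},
\]
with a constant that is bounded but in no way small; the quantity you are trying to estimate reappears on the right--hand side and cannot be absorbed. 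The only way to remove this term is to replace $\Fortin\bvphi$ by the Stokes projection $\bvphi_h$ of the dual solution and invoke $\|\GRAD\bvphi_h\|_{\bL^{q'}(\omega',\Omega)}\lesssim\|\bG\|_{\bL^{q'}(\omega',\Omega)}$, i.e.\ discrete stability in the conjugate setting $(q',\omega')$ with $q'\le 2$ and $(\omega')'=\omega\in A_{(q')'/2}=A_{q/2}$ --- which is precisely the second bullet of the corollary. Since you then propose to deduce the second bullet from the first by adjointness, each half of your argument presupposes the other. A telltale sign is that your argument never uses the hypothesis $\omega\in A_{q/2}$ beyond the weaker $\omega\in A_q$; if it worked, it would prove a strictly stronger theorem.

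What is missing is an independent base case. The paper first establishes stability for $q=2$ and $\omega\in A_1$ (Theorem~\ref{thm:stabclassic}, proved in Appendix~\ref{sec:GNS} via weighted estimates for approximate Green functions, and relying essentially on the $\bW^{2,2}$ regularity of Proposition~\ref{prop:regular} --- so your closing claim that this regularity is not needed is also incorrect), then passes to $q>2$, $\omega\in A_{q/2}$ by Rubio de Francia extrapolation (the variant of \cite[Theorem 7.8]{MR1800316} with $r=s=2$, which is where the class $A_{q/2}$ enters), and only then runs the duality argument of \cite[Theorem 4.1]{DOS:19} for $q<2$, at which point the required dual stability is available. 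Your idea of substituting Theorem~\ref{thm:velest} for the unweighted or restricted--weight continuous estimates is exactly the right move in that final duality step --- it is what removes the restrictions on $q'$ and on the boundary behavior of the weight --- but it cannot replace the base case or the extrapolation.
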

\begin{proof}
The assertion for $q=2$ is Theorem~\ref{thm:stabclassic}. Now, according to \cite[page 142]{MR1800316}, the following variant of the extrapolation theorem \cite[Theorem 7.8]{MR1800316} can be derived: given $s\geq 1$, if $T$ is a  bounded operator on $L^r(\rho,\Omega)$ for all $\rho \in A_{r/s}$, then for $q>s$ it is bounded on $L^q(\varpi,\Omega)$ for all $\varpi \in A_{q/s}$. Use this result with $r=s=2$ to conclude the stability for $q>2$.

For the second case, repeat \AJS{the} duality argument given in the proof of \cite[Theorem 4.1]{DOS:19}. But, since we are in a convex polyhedron, we use the well-posedness of Theorem~\ref{thm:velest}, so that there is no restriction on $q'>2$, nor it is required that the weight behaves nicely close to the boundary. Conclude using the just obtained stability of the Stokes projection for $q'>2$ and $\omega' \in A_{q'/2}$.
\end{proof}

\section{A class of non-Newtonian fluids under singular forcing}
\label{sec:Bulicek2}

Recently, a class of non-Newtonian fluids was studied in reference \cite{MR3582412}. \EO{Such a class} fits \eqref{eq:nlStokes} with the following data and assumptions. 

\subsection{Assumptions}
\label{sec:Bulicekassumptions}

Let $q \in (1,\infty)$ and $\omega \in A_q$. Assume that $g \in \cL^q(\omega,\Omega)$, $\bef \in \bL^q(\omega,\Omega)$, and that the nonlinear stress tensor $\polS$ satisfy \EO{the following assumptions}:
\begin{enumerate}[$\bullet$]
  \item \emph{Measurability and continuity}: The mapping $\polS : \Omega \times \R^{3\times3} \to \R^{3\times3}$ is Carath\'eodory.
  
  \item \emph{Coercivity and growth}: For all $\bQ \in \R^{3\times3}$ and every $x \in \Omega$ we have
  \[
    |\bQ^s|^2 -1 \lesssim \polS(x,\bQ^s) : \bQ, \quad |\polS(x,\bQ^s)| \lesssim |\bQ| + 1, 
  \]
  where $\bQ^s = \tfrac12(\bQ+\bQ^\intercal)$.
  
  \item \emph{Linearity at infinity}: There is a positive number $\mu$ such that for all $\bQ \in \R^{3\times3}$ and \EO{uniformly in} $x$ we have
  \[
    \lim_{|\bQ^s|\to \infty} \frac{|\polS(x,\bQ^s) - \AJS{2}\mu \bQ^s|}{|\bQ^s|} = 0.
  \]
  
  \item \emph{Strict monotonicity and strong asymptotic Uhlenbeck condition:} For all $\bQ,\bP \in \R^{3\times 3}$, with $\bQ^s \neq \bP^s$, and \EO{uniformly in} $x$
  \[
    \left( \polS(x,\bQ^s) - \polS(x,\bP^s) \right) : (\bQ-\bP) > 0
  \]
  and
  \[
    \lim_{|\bQ^s|\to \infty} \left| \frac{\partial\polS(x,\bQ^s)}{\partial \bQ^s} - \AJS{2}\mu \bI \right| = 0.
  \]
\end{enumerate}

\EO{An instance of a nonlinear stress tensor $\mathbb{S}$ satisfying all the previous assumptions can be found in \cite[(1.2)]{MR3582412}.}

\subsection{Well-posedness}
\label{sec:Bulicek}

Under the assumption that $\Omega$ has \EO{a} $C^1$ boundary, the authors of \cite{MR3582412} show existence, uniqueness, and a stability estimate for the solution to \eqref{eq:nlStokes} with the hypotheses given above; see \EO{\cite[Theorem 1.5]{MR3582412}}. Let us, with the help of the results of Section~\ref{sec:Apweights}, extend this theory to convex polyhedra.

\begin{theorem}[well-posedness]
\label{thm:Bulicek}
Let $\Omega \subset \R^3$ be a convex polyhedron. Assume that $q\in(1,\infty)$ and $\omega \in A_q$. If $\bef \in \bL^q(\omega,\Omega)$, $g \in \cL^q(\omega,\Omega)$, and the stress tensor satisfies all the aforementioned conditions, \EO{then} there is a unique pair $(\bu,\pe) \in \bW^{1,q}_0(\omega,\Omega) \times \cL^q(\omega,\Omega)$ that solves \eqref{eq:nlStokes} and satisfies the estimate
\[
  \| \GRAD \bu \|_{\bL^q(\omega,\Omega)} + \| \pe \|_{L^q(\omega,\Omega)} \lesssim 1 + \| \bef \|_{\bL^q(\omega,\Omega)} + \| g \|_{L^q(\omega,\Omega)},
\]
where the hidden constant only depends on $q$, $[\omega]_{A_q}$, and the constants involved in the properties that $\polS$ satisfies.
\end{theorem}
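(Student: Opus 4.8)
The plan is to combine the linearization trick of \cite{MR3582412} with the weighted well--posedness theory of Section~\ref{sec:Apweights}. Write $\polS(x,\bQ^s) = \mu \bQ^s + \bF_{\polS}(x,\bQ^s)$, where the ``linearity at infinity'' and ``strong asymptotic Uhlenbeck'' assumptions guarantee that $\bF_{\polS}$ is sublinear at infinity: $|\bF_{\polS}(x,\bQ^s)|/|\bQ^s| \to 0$ as $|\bQ^s| \to \infty$, and the growth bound gives $|\bF_{\polS}(x,\bQ^s)| \lesssim |\bQ| + 1$ pointwise. Then \eqref{eq:nlStokes} can be recast as a fixed point problem: given $\bw \in \bW^{1,q}_0(\omega,\Omega)$, let $(\bu,\pe)$ solve the \emph{linear} Stokes problem \eqref{eq:Stokes} with $\bF = -\DIV( \bef - \tfrac{1}{2\mu}\bF_{\polS}(\cdot,\GRAD \bw + \GRAD \bw^\intercal) )$ — equivalently, move the $\bF_{\polS}$ term to the right--hand side — and right divergence datum $g$. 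Denote this solution map by $\mathcal{T}\bw = \bu$. By Theorem~\ref{thm:velest} and Corollary~\ref{col:presest}, $\mathcal{T}$ is well defined on $\bW^{1,q}_0(\omega,\Omega)$ and maps into it, with the a priori bound controlled by $\|\bef\|_{\bL^q(\omega,\Omega)} + \|g\|_{L^q(\omega,\Omega)} + \|\bF_{\polS}(\cdot,\vare(\bw))\|_{\bL^q(\omega,\Omega)}$.

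First I would establish \emph{existence} via a Schauder--type argument. The sublinear growth of $\bF_{\polS}$ yields, for every $\epsilon>0$, a constant $C_\epsilon$ with $|\bF_{\polS}(x,\bQ^s)| \le \epsilon|\bQ^s| + C_\epsilon$; feeding this into the estimate for $\mathcal{T}$ and choosing $\epsilon$ small relative to the hidden constant in Theorem~\ref{thm:velest} shows that $\mathcal{T}$ maps a sufficiently large ball $B_R \subset \bW^{1,q}_0(\omega,\Omega)$ into itself, with $R \lesssim 1 + \|\bef\|_{\bL^q(\omega,\Omega)} + \|g\|_{L^q(\omega,\Omega)}$. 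Compactness of $\mathcal{T}$ on this ball follows from the fact that the linear Stokes solution operator gains regularity (one may localize: on the ball, $\bF_{\polS}(\cdot,\vare(\bw))$ sits in a bounded set of $\bL^q(\omega,\Omega)$, and a further application of the unweighted $L^s$ theory of Theorem~\ref{thm:WPLq} together with a compact Sobolev embedding gives precompactness of $\mathcal{T}(B_R)$ in $\bW^{1,q}_0(\omega,\Omega)$). Schauder's fixed point theorem then produces $\bu \in B_R$ with $\mathcal{T}\bu = \bu$; recovering $\pe$ from Corollary~\ref{col:presest} and the inf--sup condition \eqref{eq:infsuppres}, the pair $(\bu,\pe)$ solves \eqref{eq:nlStokes} and obeys the stated estimate.

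For \emph{uniqueness} I would invoke strict monotonicity. Suppose $(\bu_1,\pe_1)$ and $(\bu_2,\pe_2)$ are two solutions in $\bW^{1,q}_0(\omega,\Omega) \times \cL^q(\omega,\Omega)$. The standard approach is to test the difference of the two momentum equations with $\bu_1 - \bu_2$; since $\DIV(\bu_1-\bu_2)=0$ the pressure terms drop, and one is left with $\int_\Omega (\polS(x,\vare(\bu_1)) - \polS(x,\vare(\bu_2))) : \vare(\bu_1-\bu_2) \diff x = 0$, whence $\vare(\bu_1) = \vare(\bu_2)$ a.e.\ by strict monotonicity, and then $\bu_1 = \bu_2$ by Korn's inequality and the zero boundary condition; the pressures then coincide by \eqref{eq:infsuppres}. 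The subtlety — and the \textbf{main obstacle} — is that $\bu_1 - \bu_2$ is only in $\bW^{1,q}_0(\omega,\Omega)$, which need not embed into $\bW^{1,2}_0(\Omega)$, so the testing is not immediately justified and neither is the boundedness of the resulting integral. I would handle this exactly as in \cite{MR3582412}: exploit that by the linearity at infinity and growth assumptions the map $\bQ^s \mapsto \polS(x,\bQ^s) - \mu\bQ^s$ has \emph{bounded} range, so $\polS(x,\vare(\bu_i)) - \mu \vare(\bu_i) \in \bL^\infty(\Omega)$, and use the conjugate weighted integrability ($\bu_1-\bu_2 \in \bW^{1,q}_0(\omega,\Omega)$ while $\polS(x,\vare(\bu_1))-\polS(x,\vare(\bu_2)) - \mu\vare(\bu_1-\bu_2) \in \bL^{q'}(\omega',\Omega) \cap \bL^\infty(\Omega)$) to make sense of the duality pairing, the linear part being handled by the weighted $L^q$--$L^{q'}$ theory directly. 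This is the only place where the full strength of the structural assumptions on $\polS$ is used, and where one must be careful about which space the test function legitimately lives in; everything else is a routine transfer of the $C^1$--domain arguments of \cite{MR3582412} to the convex polyhedral setting, made possible precisely because Theorem~\ref{thm:velest} and Corollary~\ref{col:presest} supply the weighted linear theory that \cite[Lemma 3.2]{MR3582412} provided only for $C^1$ domains.
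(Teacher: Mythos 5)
Your reformulation as a fixed point problem for the map $\mathcal{T}$ is a reasonable starting point and your self--map estimate (via $|\bF_{\polS}(x,\bQ^s)|\le\epsilon|\bQ^s|+C_\epsilon$ and absorption) is fine, but the Schauder step has a fatal gap: $\mathcal{T}$ is \emph{not} compact on $\bW^{1,q}_0(\omega,\Omega)$. The forcing you feed into the linear Stokes solver is in divergence form, $-\DIV(\bef-\tfrac1{2\mu}\bF_{\polS}(\cdot,2\vare(\bw)))$, so Theorem~\ref{thm:velest} returns $\GRAD\bu$ in exactly the same class $\bL^q(\omega,\Omega)$ as the datum --- there is no gain of derivatives to exploit. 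A compact Sobolev embedding would give precompactness of $\mathcal{T}(B_R)$ only in $\bL^q(\omega,\Omega)$, not in the $\bW^{1,q}_0(\omega,\Omega)$ topology where your invariant ball lives; and retreating to a weak or weaker topology does not help either, because $\bw\mapsto\bF_{\polS}(\cdot,\vare(\bw))$ is a Nemytskii operator acting on the gradient and is not (weakly) sequentially continuous unless it is affine. This lack of compactness/weak continuity is precisely the obstruction that forces \cite{MR3582412}, and the paper, to use an approximation scheme together with monotone--operator and Minty/Lipschitz--truncation arguments to identify the weak limit of the stresses, rather than a Schauder fixed point. A second error: you assert that $\bQ^s\mapsto\polS(x,\bQ^s)-\mu\bQ^s$ has bounded range. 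The hypotheses only give sublinearity at infinity and the growth bound $|\polS(x,\bQ^s)|\lesssim|\bQ|+1$; a perturbation behaving like $|\bQ^s|^{1/2}$ satisfies both and is unbounded, so the $\bL^\infty$ membership you use to justify the uniqueness pairing does not follow.

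For comparison, the paper's route is: (i) establish the \emph{a priori} estimate first, and only for $q=2$, by approximating $\omega$ with the truncated weights $\omega_j=\min\{j\tilde\omega_3,\omega\}$ built from maximal functions of $|\GRAD\bu|$ and $|\pe|$ (see \eqref{eq:defofomegaj}); this is needed because one does not know beforehand that a solution produced by the approximation scheme has $\GRAD\bu\in\bL^2(\omega,\Omega)$, so the weighted linear theory of Theorem~\ref{thm:velest} and Corollary~\ref{col:presest} is applied with the admissible weights $\omega_j$ and the bound \eqref{eq:keyaprioriest} is made uniform in $j$ using the linearity at infinity; (ii) extend to $q\ne2$ by the extrapolation theorem; (iii) import existence and uniqueness verbatim from \cite{MR3582412}, the only new ingredient being that the linear theory on convex polyhedra (Theorems~\ref{thm:WPLq} and~\ref{thm:velest}) replaces the $C^1$--domain results used there. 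Your proposal correctly identifies that Theorem~\ref{thm:velest} and Corollary~\ref{col:presest} are the new inputs, but the existence mechanism you propose does not close.
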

\begin{proof}
The proof follows after minor modifications of the proof of \cite[Theorem 1.5]{MR3582412}. We will only indicate the main steps that need to be changed.

First, we consider \eqref{eq:Stokes} with $q \in (1,\infty)$, $\omega \in A_q$, $(\bef,g) \in \bL^q(\omega,\Omega) \times \cL^q(\omega,\Omega)$, $\bF = -\DIV \bef$, and $\mu$ as in the assumptions for $\polS$. Thus, owing to the results of Theorem~\ref{thm:velest} and Corollary~\ref{col:presest}, this problem is well-posed provided $\Omega \subset \R^3$ is a convex polyhedron.

Next, as in \cite[Section 3.2]{MR3582412}, we find, for $q=2$, suitable a priori estimates for solutions of \eqref{eq:nlStokes}. The first idea behind the argument is to \EO{assume that $\GRAD \bu \in \bL^s(\Omega)$ and $\pe \in L^s(\Omega)$, with $s \in (1,2]$, and} approximate the weight $\omega \in A_2$ by $\omega_j$ \EO{in such a way that} $\GRAD \bu \in \bL^2(\omega_j,\Omega)$ and $\pe \in \cL^2(\omega_j,\Omega)$. This is accomplished by defining
\begin{equation}
\label{eq:defofomegaj}
\tilde\omega_1 = \calM[|\GRAD \bu|]^{s-2},
\quad
\tilde\omega_2 = \calM[|\pe|]^{s-2}, 
\quad
\tilde\omega_3 = \min\{ \tilde\omega_1,\tilde\omega_2\}.
\end{equation}
\EO{Since $s \in (1,2]$, we have that $\tilde\omega_1, \tilde \omega_2 \in A_2$ \cite[Lemma 2.4]{MR3582412}, which implies that $\tilde\omega_3 \in A_2$ \cite[estimate (2.4)]{MR3582412}. Since, by assumption,} $\GRAD \bu \in \bL^s(\Omega)$ and $\pe \in L^s(\Omega)$, for  $s \in (1,2]$, \cite[estimate (2.6)]{MR3582412} yields $\GRAD \bu \in \bL^2(\tilde \omega_1,\Omega)$ and $\pe \in L^2(\tilde \omega_2,\Omega)$. Let us define, for $j \in \polN$, the weight $\omega_j := \min\{ j \tilde\omega_3,\omega\}$. Notice that $\omega_j \in A_2$ and that
\[
  \GRAD \bu \in \bL^2(\omega_j,\Omega), \quad \pe \in L^2(\omega_j,\Omega).
\]

\EO{We now rewrite \eqref{eq:nlStokes} as a linear problem: Find $(\bu,\pe) \in \bW^{1,2}_0(\omega_j,\Omega) \times \cL^2(\omega_j,\Omega)$ such that}
\begin{align*}
  \int_\Omega \left( \AJS{2}\mu \vare(\bu) : \vare(\bv) - \pe\DIV \bv \right) \diff x &= \int_\Omega \left( \bef + \AJS{2}\mu\vare(\bu) - \polS(x,\vare(\bu)) \right):\GRAD \bv \diff x \\
  \int_\Omega \DIV \bu r \diff x = \int_\Omega g r \diff x,
\end{align*}
for every $(\bv,r) \in \bW^{1,2}_0(\omega_j^{-1},\Omega) \times \cL^2(\omega_j^{-1},\Omega)$.
The previous arguments, in conjunction with the estimates of Theorem~\ref{thm:velest} and Corollary~\ref{col:presest}, then imply that
\begin{multline}
\label{eq:keyaprioriest}
  \| \GRAD \bu \|_{\bL^2(\omega_j,\Omega)}^2 + \|\pe \|_{L^2(\omega_j,\Omega)}^2 \lesssim 1 + \| \bef \|_{\bL^2(\omega_j,\Omega)}^2 + \| g \|_{L^2(\omega_j,\Omega)}^2
  \\ + \int_{\{|\vare(\bu)|\geq m\}} \omega_j \frac{|\polS(x,\vare(\bu))-\AJS{2}\mu \vare(\bu)|^2}{|\vare(\bu)|^2} |\vare(\bu)|^2 \diff x,    
\end{multline}
where the \EO{hidden} constant depends on $\omega$, $m$, and the properties of $\polS$. By properly choosing $m$ and using that $\polS$ has linear growth at infinity, we conclude that
\[
  \| \GRAD \bu \|_{\bL^2(\omega_j,\Omega)}^2 + \|\pe \|_{L^2(\omega_j,\Omega)}^2 \lesssim 1 + \| \bef \|_{\bL^2(\omega_j,\Omega)}^2 + \| g \|_{L^2(\omega_j,\Omega)}^2,
\]
which is uniform in $j$. Passing to the limit and cleaning up the proof, the desired a priori estimate is obtained for the case $q=2$; see \cite[Section 3.2]{MR3582412} for details. For $q\neq2$ it suffices to invoke the extrapolation theorem provided in \cite[Theorem 7.8]{MR1800316}.

The rest of the proof, \ie existence and uniqueness follows \emph{verbatim} the results of \cite{MR3582412}. It is only worth noticing that here, once again it is necessary to use, via Theorem~\ref{thm:WPLq}, the fact that we are in a convex polyhedron.
\end{proof}

\begin{remark}[novelty]
Since the proof of Theorem~\ref{thm:Bulicek} follows \cite{MR3582412} one must wonder what is the novelty here. The main difference lies in the fact that estimate \eqref{eq:keyaprioriest}, over convex polyhedra, can only be obtained by invoking Theorem~\ref{thm:velest} and Corollary~\ref{col:presest}. Indeed, the results of \cite{MR3582412} require a localization argument for the linear problem that would not work in general over polyhedra. On the other hand, the well-posedness results of \cite{OS:17infsup} require that the weights $\omega_j \in A_2(\Omega)$ which seems impossible to guarantee, since they depend on the solution itself; see \EO{the involved definitions in} \eqref{eq:defofomegaj}.
\end{remark}

\subsection{Discretization}
\label{sub:DiscreteBulicek}

Let us investigate the convergence properties of finite element approximations. We will operate under the setting described in Corollary~\ref{cor:stabStokesProj} and seek for a pair $(\bu_h,\pe_h) \in \bX_h \times M_h$ that satisfies
\begin{equation}
\label{eq:discrBulicek}
   \begin{dcases}
    \int_\Omega \polS(x,\vare(\bu_h)) : \EO{ \nabla \bv_h } \diff x - \int_\Omega \pe_h \DIV \bv_h \diff x = \int_\Omega \bef :\GRAD \bv_h \diff x & \forall \bv_h \in \bX_h, \\
    \int_\Omega \DIV \bu_h r_h \diff x = 0 & \forall r_h \in M_h,
  \end{dcases}
\end{equation}
where $\polS$ is assumed to satisfy the conditions described in section~\ref{sec:Bulicekassumptions}. Our main result regarding the convergence of discretizations is the following.

\begin{theorem}[convergence]
\label{thm:convergenceBulicek}
Assume that either $q \in [2,\infty)$ and $\omega \in A_{q/2}$ or $q \in (1,2]$ and $\omega' \in A_{q'/2}$. Let $\bef \in \bL^q(\omega,\Omega)$. For every $h>0$, problem \eqref{eq:discrBulicek} admits 
\EO{a unique} solution which satisfies the estimate
\[
  \| \GRAD \bu_h \|_{\bL^q(\omega,\Omega)} + \| \pe_h \|_{L^q(\omega,\Omega)} \lesssim 1 + \| \bef \|_{\bL^q(\omega,\Omega)},
\]
where the hidden constant does not depend on $h$. Moreover, as $h\to 0$, there is a subsequence of $\{\bu_h\}_{h>0}$ that converges weakly, in $\bW^{1,q}_0(\omega,\Omega)$ to $\bu$, the solution of problem \eqref{eq:nlStokes} with $g=0$.
\end{theorem}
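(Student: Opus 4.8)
The plan is to establish existence of a discrete solution via a fixed-point (or topological degree / Brouwer-type) argument on the finite-dimensional space $\bX_h\times M_h$, then derive the stability bound uniformly in $h$, and finally extract a weakly convergent subsequence and pass to the limit using monotonicity of $\polS$. First I would obtain the \emph{a priori estimate}: testing the first equation in \eqref{eq:discrBulicek} with $\bv_h=\bu_h$ (which is admissible since $\DIV\bu_h$ is tested against $M_h\ni\pe_h$ and the second equation with $r_h=\pe_h$ gives $\int_\Omega\DIV\bu_h\,\pe_h=0$, so the pressure drops out), the coercivity and growth assumptions on $\polS$ give $\|\vare(\bu_h)\|_{\bL^2(\Omega)}^2\lesssim 1+\|\bef\|_{\bL^2(\Omega)}^2$, and unweighted Korn controls $\|\GRAD\bu_h\|_{\bL^2(\Omega)}$. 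To upgrade this to the weighted bound I would mimic the argument in the proof of Theorem~\ref{thm:Bulicek}: write the discrete problem as a linear discrete Stokes problem with right-hand side $\bef+\mu\vare(\bu_h)-\polS(x,\vare(\bu_h))$, invoke the weighted stability of the Stokes projection from Corollary~\ref{cor:stabStokesProj} (valid precisely under the stated hypotheses on $q,\omega$), and use the linear-growth-at-infinity of $\polS$ together with a splitting over $\{|\vare(\bu_h)|\ge m\}$ exactly as in \eqref{eq:keyaprioriest}; this yields $\|\GRAD\bu_h\|_{\bL^q(\omega,\Omega)}+\|\pe_h\|_{L^q(\omega,\Omega)}\lesssim 1+\|\bef\|_{\bL^q(\omega,\Omega)}$ with constant independent of $h$. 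For the pressure one uses the discrete inf--sup condition \eqref{eq:infsuph}.

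For \emph{existence} of $(\bu_h,\pe_h)$ I would work on the discretely divergence-free subspace $\bV_h=\{\bv_h\in\bX_h:\int_\Omega\DIV\bv_h\,r_h=0\ \forall r_h\in M_h\}$, so that the pressure is eliminated; the map $\bv_h\mapsto\big(\bw_h\mapsto\int_\Omega\polS(x,\vare(\bv_h)):\vare(\bw_h)-\int_\Omega\bef:\GRAD\bw_h\big)$ is a continuous map from $\bV_h$ to its dual, and the a priori estimate shows it is coercive on a large enough ball; a standard corollary of Brouwer's fixed-point theorem (e.g. \cite[Chapter IV]{MR851383} / \cite{Guermond-Ern}) then gives a zero $\bu_h\in\bV_h$. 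The pressure $\pe_h\in M_h$ is recovered from the residual functional $\bv_h\mapsto\int_\Omega\polS(x,\vare(\bu_h)):\vare(\bv_h)-\int_\Omega\bef:\GRAD\bv_h$, which annihilates $\bV_h$, via the discrete inf--sup condition.

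For the \emph{limit passage}, the uniform bound gives a subsequence with $\bu_h\rightharpoonup\bu$ in $\bW^{1,q}_0(\omega,\Omega)$ and $\pe_h\rightharpoonup\pe$ in $\cL^q(\omega,\Omega)$, and by compactness of the embedding $\bW^{1,q}_0(\omega,\Omega)\hookrightarrow\bL^q(\omega,\Omega)$ (and weighted Rellich) strong convergence of $\bu_h$ in $\bL^q(\omega,\Omega)$. Using the Fortin operator $\Fortin$ one passes the discrete equations to arbitrary test functions $(\bv,r)\in\bW^{1,q'}_0(\omega',\Omega)\times\cL^{q'}(\omega',\Omega)$, obtaining that $\chi:=\text{weak-}\lim\polS(\cdot,\vare(\bu_h))$ satisfies the limiting equation; identifying $\chi=\polS(\cdot,\vare(\bu))$ is done by Minty's trick using strict monotonicity together with strong convergence of $\bu_h$, after checking the limsup inequality $\limsup_h\int_\Omega\polS(x,\vare(\bu_h)):\vare(\bu_h)\le\int_\Omega\bef:\GRAD\bu$ (here one uses $\bu_h\to\bu$ strongly and that $\pe_h$ pairs to zero with $\DIV\bu_h$). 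Uniqueness of the limit (Theorem~\ref{thm:Bulicek}) shows the whole sequence converges. \textbf{The main obstacle} I anticipate is the Minty/monotonicity identification step in the \emph{weighted} $L^q$ setting: one must justify that $\polS(\cdot,\vare(\bu_h))$ is bounded in $\bL^{q'}(\omega',\Omega)$ (which follows from the growth condition $|\polS(x,\bQ^s)|\lesssim|\bQ|+1$ and $\bu_h$ bounded in $\bW^{1,q}_0(\omega,\Omega)$, noting $\omega^{-q'/q}=\omega'$), and that the monotonicity trick goes through with the weight present — this is where the extra structure (strict monotonicity plus the strong asymptotic Uhlenbeck condition, exactly as used in \cite{MR3582412}) is essential, and where care with the weighted duality pairings is required.
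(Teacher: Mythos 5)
Your skeleton matches the paper's proof: existence by a finite--dimensional monotonicity argument, the uniform a priori bound obtained by rereading \eqref{eq:discrBulicek} as a discrete Stokes system with right--hand side $\bef+\mu\vare(\bu_h)-\polS(x,\vare(\bu_h))$ and invoking the weighted stability of the Stokes projection (Corollary~\ref{cor:stabStokesProj}) together with the linearity--at--infinity splitting over $\{|\vare(\bu_h)|\geq m\}$, and the limit passage through the Fortin operator against smooth solenoidal test functions. Two issues remain. A small but real error first: since $\polS$ has \emph{linear} growth, $|\polS(x,\bQ^s)|\lesssim|\bQ|+1$, the sequence $\polS(\cdot,\vare(\bu_h))$ is bounded in $\bL^{q}(\omega,\Omega)$ --- the same space as $\vare(\bu_h)$ --- and not in $\bL^{q'}(\omega',\Omega)$; the $(q-1)$--power duality you invoke is the right heuristic for $p$--Laplacian--type stresses but not here. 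The correct pairing is $\bL^q(\omega,\Omega)$ against velocities in $\bW^{1,q'}_0(\omega',\Omega)$, which is why the paper sends $\Fortin\bv\to\bv$ in $\bW^{1,q'}_0(\omega',\Omega)$.

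The genuine gap is the identification $\polD=\polS(x,\vare(\bu))$. The classical Minty trick with the energy $\limsup$ inequality does not apply, precisely because the weak limit $\bu$ is \emph{not an admissible test function}: it lies in $\bW^{1,q}_0(\omega,\Omega)$ while admissible test functions must lie in $\bW^{1,q'}_0(\omega',\Omega)$, and the product $\polD:\vare(\bu)$ (both factors merely in $\bL^q(\omega,\Omega)$) need not be integrable. Your proposed inequality $\limsup_h\int_\Omega\polS(x,\vare(\bu_h)):\vare(\bu_h)\,\mathrm{d}x\le\int_\Omega\bef:\GRAD\bu\,\mathrm{d}x$ therefore has no usable counterpart on the limit side, and weighted Rellich compactness of $\bu_h$ does not repair this. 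You correctly flag this as the main obstacle but do not resolve it. The paper resolves it by appealing to the Lipschitz--truncation--type results of \cite[Theorems 1.9 and 1.10]{MR3582412}, the key observation being that those arguments do not require smoothness of $\partial\Omega$ and hence remain valid on a convex polyhedron; without that machinery (or an equivalent substitute) the proof is incomplete for general $q$ and $\omega$.
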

\begin{proof}
Existence \EO{and uniqueness of} solutions follows from standard monotone operator theory \cite[Chapter 2]{MR3014456}. Let us now provide the claimed a priori bound with an argument similar to that of Theorem~\ref{thm:Bulicek}. We see that the pair $(\bu_h,\pe_h)$ is such that
\begin{align*}
  \int_\Omega \left( \AJS{2}\mu \vare(\bu_h) : \vare(\bv_h) - \pe_h\DIV \bv_h \right) \diff x &= \int_\Omega \left( \bef + \AJS{2}\mu\vare(\bu_h) - \polS(x,\vare(\bu_h)) \right):\GRAD \bv_h \diff x, \\
  \int_\Omega \DIV \bu_h r_h \diff x &= 0,
\end{align*}
for every $(\bv_h,r_h) \in \bX_h \times M_h$. The stability of the Stokes projection on weighted spaces, proved in Corollary~\ref{cor:stabStokesProj}, implies that \EO{the pair $(\bu_h,\pe_h)$} satisfies an estimate similar to \eqref{eq:keyaprioriest}. Namely,
\begin{multline}
\label{eq:apriori_estimate_h}
  \| \GRAD \bu_h \|_{\bL^q(\omega,\Omega)} + \|\pe_h \|_{L^q(\omega,\Omega)} \lesssim 1 + \| \bef \|_{\bL^q(\omega,\Omega)}
  \\ + \left( \int_{\{|\vare(\bu_h)|\geq m\}} \omega \frac{|\polS(x,\vare(\bu_h))-\AJS{2}\mu \vare(\bu_h)|^q}{|\vare(\bu_h)|^q} |\vare(\bu_h)|^q \diff x \right)^{\frac{1}{q}}.
\end{multline}
Once again, \EO{by properly choosing $m$ and utilizing the growth properties of $\polS$, we} conclude the desired estimate; the hidden constant being independent of $h$.

The a priori estimate allows us to extract a (not relabeled) subsequence $\{\bu_h\}_{h>0}$ such that  $\bu_h \rightharpoonup \tilde\bu$ in $\bW^{1,q}_0(\omega,\Omega)$.
It remains then to show that $ \tilde\bu$ solves \eqref{eq:nlStokes}. To see this,
notice, first of all, that $\tilde \bu$ must be solenoidal. Let $\bv \in \bC_0^\infty(\Omega)$ be solenoidal.  
Set $\bv_h = \Fortin \bv$, where $\Fortin$ is the Fortin operator,  as a test function on \eqref{eq:discrBulicek} to obtain
\[
  \int_\Omega \polS(x,\vare(\bu_h)) : \EO{\nabla} \Fortin \bv \diff x = \int_\Omega \bef :\GRAD \Fortin\bv \diff x.
\]
Notice that $\Fortin \bv \to \bv$ in $\bW^{1,q'}_0(\omega',\Omega)$. On the other hand, the properties of $\polS$ imply that there is $\polD \in \bL^q(\omega,\Omega)$ for which $\polS(x,\vare(\bu_h)) \rightharpoonup \polD$, so that, passing to the limit $h \to 0$, the previous identity implies that
\[
  \int_\Omega \polD : \EO{\nabla} \bv \diff x = \int_\Omega \bef :\GRAD \bv \diff x \quad \forall \bv \in \bW^{1,q'}_0(\omega',\Omega), \ \DIV \bv = 0.
\]

We would like to conclude with a variant of Minty's trick \cite[Lemma 2.13]{MR3014456}. However, as described in \cite[section 5.3]{MR3582412}, this is not possible as $\tilde \bu$ is not an admissible test function. However, the same reference has shown how to deal with this. The important point to note here is that the necessary technical steps developed there, \cite[Theorems 1.9 and 1.10]{MR3582412}, do not assume smoothness on the domain $\Omega$. In conclusion, $\polD = \polS(x,\vare(\tilde\bu))$ and, by uniqueness, $\tilde \bu = \bu$.
\end{proof}

\section{The Smagorinski model of turbulence}
\label{sec:Rappazz}

In the subgrid modeling of turbulence, one of the first proposed models was the so-called Smagorinski model \cite{Smagorinski}, which is nothing but a special case of the Lady\v{z}enskaja model of non-Newtonian fluid flow. Some history on this model is described in \cite{MR2471134,MR2119869}. One of the main issues regarding this model is that, as it has been observed \EO{in} \cite{MR3494304,Lesieur}, it tends to overdissipate the flow near the boundaries. Several refinements of this model have been proposed, \cite{MR1815221,MR1990952,MR3093471,MR2185509}, and here we wish to provide some analysis to one of them.

In \cite{MR3488119,MR3837915}, an analysis of a simplified version of the so-called Smagorinski model of turbulence was studied. This problem takes the form of \eqref{eq:nlStokes} but the stress tensor, in this case, is
\begin{equation}
\label{eq:Smagorinski}
  \polS(x,\bQ^s) = \AJS{2}\left( \mu + \mu_{NL} \dist(x,\partial\Omega)^\alpha |\bQ^s|\right) \bQ^s, \qquad \alpha \in [0,2),
\end{equation}
\EO{where $\mu>0$ corresponds to a laminar viscosity and $\mu_{NL}$ denotes a positive constant.} The idea here is that the additional, nonlinear viscosity, vanishes near the walls. The authors of \cite{MR3488119,MR3837915} show the existence and uniqueness of a velocity field $\bu$ and the existence of a pressure field $p$ in
\[
\bW^{1,2}_0(\Omega) \cap \bW^{1,3}(\dist(\cdot,\partial\Omega)^\alpha,\Omega), 
\quad
\cL^2(\Omega) \AJS{+} \cL^{3/2}(\dist(\cdot, \partial \Omega)^{-\alpha/2},\Omega),
\]
respectively; the pressure being, in general not unique \EO{\cite[Theorem 1]{MR3837915}. Uniqueness can be guaranteed under further restrictions on $\alpha$: there exists $\alpha_0 \leq 1/2$ such that for every $\alpha < \alpha_0$ problem \eqref{eq:nlStokes} admits a unique solution $(\bu,p) \in  \bW^{1,3}(\dist(\cdot,\partial\Omega)^\alpha,\Omega) \times \cL^{3/2}(\dist(\cdot, \partial \Omega)^{-\alpha/2},\Omega)$ \cite[Theorem 2]{MR3837915}.} It is important to note that, even for $\alpha = 0$, \eqref{eq:Smagorinski} does not fit in the framework developed in Section~\ref{sec:Bulicek2}. In particular, such a stress tensor is not linear at infinity. This \EO{may} serve as an explanation for the difference in the functional setting that needs to be adopted to analyze \eqref{eq:nlStokes} with the stress tensor \EO{$\polS$} in the form \eqref{eq:Smagorinski}.

\subsection{Analysis}
\label{sub:AnalysisgenSmago}

Let us, in the framework that we have developed so far, study a generalization of this problem and, in addition, provide an error estimate for finite element approximations. The specialization to the Smagorinski model \eqref{eq:Smagorinski} will be immediate. \EO{We will be interested in the following problem}: given $q \in (1,\infty)$, a weight $\omega \in A_q$, a forcing $\bef$, and $g=0$, find $(\bu,\pe)$ that solves \eqref{eq:nlStokes}, where the stress tensor is given by
\begin{equation}
\label{eq:genSmagorinskivisco}
  \polS(x,\bQ^s) = \AJS{2}\left(\mu + \omega(x) |\bQ^s|^{q-2}\right) \bQ^s.
\end{equation}

We now proceed with \EO{an analysis to} this problem. To accomplish this task, we first introduce some notation. For $q \in (1,\infty)$ and $\omega \in A_q$, we set
\[
  \bX_q(\omega) = \bW^{1,2}_0(\Omega) \cap \bW^{1,q}_0(\omega,\Omega), \qquad M_q(\omega) = \cL^2(\Omega) \AJS{+} \cL^{q'}(\omega',\Omega).
\]
Slight modifications of the arguments in \cite{MR3488119,MR3837915} will give existence of solutions.

\begin{theorem}[existence]
\label{thm:Rappaz}
Let $\Omega \subset \R^3$ be a convex polyhedron, $q \in (1,\infty)$, $\omega \in A_q$, and $\bef \in \bL^2(\Omega) \AJS{+} \bL^{q'}(\omega',\Omega)$.
Then, there is 
$
  (\bu,\pe) \in \bX_q(\omega) \times M_q(\omega)
$
such that
\[
  \begin{dcases}
    \int_\Omega \polS(x,\vare(\bu)) :\vare(\bv) \diff x - \int_\Omega \pe \DIV \bv \diff x = \int_\Omega \bef: \GRAD \bv \diff x  & \forall \bv \in \bX_q(\omega), \\
    \int_\Omega \DIV \bu r \diff x  = 0  & \forall r \in M_q(\omega),
  \end{dcases}
\]
where $\polS$ is given by \eqref{eq:genSmagorinskivisco}. The velocity component of this pair is unique.
\end{theorem}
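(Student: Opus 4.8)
The plan is to set up the problem as a monotone variational inequality (or equivalently, a variational problem governed by a coercive, strictly monotone operator) on the reflexive Banach space $\bX_q(\omega)$, restricted to its solenoidal subspace, and then recover the pressure via an inf--sup argument. First I would introduce the closed subspace $\bV_q(\omega) = \{ \bv \in \bX_q(\omega) : \DIV \bv = 0\}$ and define the operator $\mathcal{A} : \bX_q(\omega) \to \bX_q(\omega)^*$ by $\langle \mathcal{A}\bu, \bv\rangle = \int_\Omega \polS(x,\vare(\bu)):\vare(\bv)\diff x$. The key structural facts to verify are: (i) $\mathcal{A}$ is well defined and bounded from $\bX_q(\omega)$ into its dual, which follows from the two-term growth $|\polS(x,\bQ^s)| \lesssim |\bQ^s| + \omega(x)|\bQ^s|^{q-1}$ together with Hölder's inequality on the weighted space and the $\bL^2$ part; (ii) $\mathcal{A}$ is strictly monotone, which follows because $\bQ^s \mapsto (\mu + \omega(x)|\bQ^s|^{q-2})\bQ^s$ is the gradient of the convex function $\bQ^s \mapsto \tfrac{\mu}{2}|\bQ^s|^2 + \tfrac{\omega(x)}{q}|\bQ^s|^q$, hence monotone, and strictly so because of the strictly convex $\tfrac{\mu}{2}|\bQ^s|^2$ term combined with Korn's inequality; (iii) $\mathcal{A}$ is coercive on $\bV_q(\omega)$: $\langle \mathcal{A}\bv,\bv\rangle = \int_\Omega \mu|\vare(\bv)|^2 + \omega|\vare(\bv)|^q \diff x \gtrsim \|\vare(\bv)\|_{\bL^2(\Omega)}^2 + \|\vare(\bv)\|_{\bL^q(\omega,\Omega)}^q$, and by Korn's inequality (unweighted and weighted, the latter valid since $\omega \in A_q$) this dominates $\|\GRAD\bv\|_{\bL^2(\Omega)}^2 + \|\GRAD\bv\|_{\bL^q(\omega,\Omega)}^q$, which controls the norm on $\bX_q(\omega)$; (iv) $\mathcal{A}$ is hemicontinuous (in fact continuous), which is routine from the Carathéodory and continuity structure of $\polS$.

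Next I would apply the Browder--Minty theorem for monotone, coercive, hemicontinuous operators on reflexive Banach spaces (e.g.\ \cite[Chapter 2]{MR3014456}) to the problem: find $\bu \in \bV_q(\omega)$ such that $\langle \mathcal{A}\bu, \bv\rangle = \int_\Omega \bef : \GRAD \bv \diff x$ for all $\bv \in \bV_q(\omega)$. Here I must check that the right-hand side $\bv \mapsto \int_\Omega \bef : \GRAD \bv \diff x$ is a bounded linear functional on $\bX_q(\omega)$; this is exactly why the hypothesis $\bef \in \bL^2(\Omega) \oplus \bL^{q'}(\omega',\Omega)$ is imposed, since then splitting $\bef = \bef_2 + \bef_{q'}$ and using Hölder pairing $\bL^2$--$\bL^2$ and $\bL^{q'}(\omega')$--$\bL^q(\omega)$ (the conjugacy $\omega \in A_q \iff \omega' \in A_{q'}$ from Section~\ref{sec:Prelim}) gives the bound. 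Uniqueness of $\bu$ follows from strict monotonicity: if $\bu_1, \bu_2$ are two solenoidal solutions, testing the difference of equations with $\bu_1 - \bu_2$ gives $\langle \mathcal{A}\bu_1 - \mathcal{A}\bu_2, \bu_1 - \bu_2\rangle = 0$, forcing $\vare(\bu_1) = \vare(\bu_2)$ a.e., and Korn plus the boundary condition gives $\bu_1 = \bu_2$.

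Finally I would recover the pressure. The functional $\bv \mapsto \int_\Omega \bef:\GRAD\bv - \polS(x,\vare(\bu)):\vare(\bv)\diff x$ vanishes on $\bV_q(\omega)$, hence by the de~Rham-type / inf--sup argument it is represented by a pressure. Concretely, one splits the test space as $\bX_q(\omega) = \bW^{1,2}_0(\Omega) \cap \bW^{1,q}_0(\omega,\Omega)$ and uses the two inf--sup conditions — the classical $\bL^2$ one and the weighted one \eqref{eq:infsuppres} — to produce $\pe = \pe_2 + \pe_{q'}$ with $\pe_2 \in \cL^2(\Omega)$ and $\pe_{q'} \in \cL^{q'}(\omega',\Omega)$, i.e.\ $\pe \in M_q(\omega)$, satisfying the full variational identity for all $\bv \in \bX_q(\omega)$ and all $r \in M_q(\omega)$ (the latter equation $\int_\Omega \DIV\bu\, r = 0$ being immediate from $\bu \in \bV_q(\omega)$). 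I expect the main obstacle to be precisely this last step: justifying that the functional can be represented by a pressure lying in the \emph{sum} space $M_q(\omega)$ rather than in a single weighted space, which requires care in decomposing an arbitrary $\bv \in \bX_q(\omega)$ compatibly with both inf--sup conditions and controlling the two pressure pieces independently; a secondary subtlety is verifying the weighted Korn inequality in the form needed on $\bW^{1,q}_0(\omega,\Omega)$ for $\omega \in A_q$, which one invokes from the weighted-estimates literature. Non-uniqueness of the pressure is expected and not an obstruction, as the statement only claims uniqueness of the velocity.
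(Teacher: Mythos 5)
Your proposal is correct and matches the paper's argument in all essentials: the paper phrases the existence step as minimization of the convex energy $\calJ(\bv)=\int_\Omega\bigl(\tfrac{\mu}{2}|\vare(\bv)|^2+\tfrac1q\omega|\vare(\bv)|^q\bigr)\diff x-\int_\Omega\bef:\GRAD\bv\diff x$ over the solenoidal subspace by the direct method (coercivity via the weighted Korn inequality, uniqueness of the velocity via strict convexity), which is the variational form of your Browder--Minty argument since your operator $\mathcal{A}$ is exactly the G\^ateaux derivative of the leading term of $\calJ$. The pressure is recovered precisely as you describe, by applying the inf--sup condition \eqref{eq:infsuppres} twice --- once unweighted with $q=2$ and once with the pair $(q,\omega)$ --- to produce $\pe=\pe_1+\pe_2\in M_q(\omega)$, with no claim of uniqueness for $\pe$.
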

\begin{proof}
We essentially follow \cite[Sections 2.2 and 2.3]{MR3837915}. Define the functional
\[
  \calJ(\bv) = \int_\Omega \polA(x,\vare(\bv)) \diff x - \int_\Omega \bef : \GRAD \bv \diff x,
\]
with
\[
  \polA(x,\bQ^s) = \AJS{\mu} |\bQ^s|^2 + \frac{\AJS{2}}q\omega(x)|\bQ^s|^q.
\]
We wish to minimize $\EO{\calJ}$ over $\bX_{q,\DIV}(\omega):=\{ \bw \in \bX_q(\omega) : \DIV \bw = 0\}$. Observe that $\calJ$ is G\^{a}teaux differentiable and
\[
  \calJ'(\bu) \bv =  \int_\Omega \polS(x,\vare(\bu)) :\vare(\bv) \diff x - \int_\Omega \bef : \GRAD \bv \diff x \quad \forall \bv \in \bX_{q,\DIV}(\omega).
\]
\AJS{In addition, $\calJ$ is strictly convex and continuous on $\bX_{q,\DIV}(\omega)$; \EO{see \cite[Lemma 7]{MR3837915} and \cite[Lemma 6]{MR3837915}, respectively.} Moreover, owing to Korn's inequality \eqref{eq:WeightedKorn} and its unweighted version, \EO{\ie \eqref{eq:WeightedKorn} with $q=2$ and $\omega \equiv 1$}, we have
\[
  \frac{\calJ(\bv)}{\| \bv \|_{\bX_{q,\DIV}(\omega)}} = \frac{ \mu \| \vare(\bv) \|_{\bL^2(\Omega)}^2 + \frac2q \| \vare(\bv) \|_{\bL^q(\omega,\Omega)}^q}{\| \GRAD\bv \|_{\bL^2(\Omega)} +  \| \GRAD \bv \|_{\bL^q(\omega,\Omega)}} \gtrsim \frac{ \| \GRAD \bv \|_{\bL^2(\Omega)}^2 + \| \GRAD \bv \|_{\bL^q(\omega,\Omega)}^q}{\| \GRAD\bv \|_{\bL^2(\Omega)} +  \| \GRAD \bv \|_{\bL^q(\omega,\Omega)}}.
\]
We can thus obtain that
\[
 \| \bv \|_{\bX_{q,\DIV}(\omega)} \to \infty
 \EO{\implies}
   \frac{\calJ(\bv)}{\| \bv \|_{\bX_{q,\DIV}(\omega)}} \to \infty.
\]
In other words, $\calJ$ is coercive on $\bX_{q,\DIV}(\omega)$}. Consequently, direct methods provide the existence and uniqueness of a minimizer $\bu \in \bX_{q,\DIV}(\omega)$ which, in addition, satisfies
\[
  \int_\Omega \polS(x,\vare(\bu)) :\vare(\bv) \diff x = \int_\Omega \bef: \GRAD \bv \diff x \quad \forall \bv \in \bX_{q,\DIV}(\omega).
\]

To find the pressure we apply the inf-sup condition \eqref{eq:infsuppres} twice. First with $q=2$ and $\omega\equiv1$, to find that there is $\pe_1 \in \cL^2(\Omega)$ and another time with $q \in (1,\infty)$ and $\omega \in A_q$, to find $\pe_2 \in \cL^{q'}(\omega',\Omega)$, so that $\pe = \pe_1 + \pe_2 \in M_q(\omega)$ is such that
\[
  \int_\Omega \pe \DIV \bv \diff x = \int_\Omega \bef :\GRAD \bv \diff x - \int_\Omega \polS(x,\vare(\bu)) :\vare(\bv) \diff x \quad \forall \bv \in \bX_q(\omega).
\]
As announced, nothing can be said about the uniqueness of $\pe$; see the discussion in \cite[Remark 2]{MR3837915}. \EO{This concludes the proof.}
\end{proof}

\subsection{Discretization}
\label{sub:DiscrSmago}

We now proceed with a finite element approximation of problem \eqref{eq:nlStokes}, where the stress tensor $\polS$ is given by \eqref{eq:genSmagorinskivisco}. We will seek for $(\bu_h,\pe_h) \in \bX_h \times M_h$ such that
\begin{equation}
\label{eq:discrSmago}
   \begin{dcases}
    \int_\Omega \polS(x,\vare(\bu_h)) :\vare(\bv_h) \diff x - \int_\Omega \pe_h \DIV \bv_h \diff x = \int_\Omega \bef :\GRAD \bv_h \diff x & \forall \bv_h \in \bX_h, \\
    \int_\Omega \DIV \bu_h r_h \diff x = 0 & \forall r_h \in M_h.
  \end{dcases}
\end{equation}
Existence of \EO{a} solution follows along the lines of Theorem~\ref{thm:Rappaz}. Let us now provide \EO{an a priori error estimate}.

\begin{theorem}[error estimate]
\label{thm:errestSmago}
Let $q \in [2,\infty)$ \EO{and let} $(\bu,\pe) \in \bX_q(\omega) \times M_q(\omega)$ \EO{be a solution to} the generalized Smagorinski problem described by \eqref{eq:nlStokes} with the stress tensor \eqref{eq:genSmagorinskivisco}. Let $(\bu_h,\pe_h) \in \bX_h \times M_h$ denote \EO{a} solution to \eqref{eq:discrSmago}. If $\omega \in A_{q/2}$, then we have the following \EO{a priori error estimate:}
\begin{multline*}
  \| \vare(\bu - \bu_h )\|_{\bL^2(\Omega)}^2 + \| \vare(\bu - \bu_h )\|_{\bL^q(\omega,\Omega)}^q
  \\
   \lesssim \AJS{ \| \vare(\bu - \bU_h) \|_{\bL^2(\Omega)}^2 + \| \vare(\bu - \bU_h) \|_{\bL^q(\omega,\Omega)}^{q/(q-1)}, }
\end{multline*}
where the hidden constant is independent of $\bu$, $\pe$, $h$, \EO{and $(\bU_h,P_h)$; the latter being the Stokes projection of $(\bu,\pe)$}.
\end{theorem}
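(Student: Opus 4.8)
The plan is to combine the uniform monotonicity of the map $\bv\mapsto\polS(\cdot,\vare(\bv))$ with Galerkin orthogonality, handling the incompressibility constraint through the weighted Fortin operator furnished by \eqref{eq:infsuph}. Since $q\ge 2$, the strict convexity of $\polA$ exploited in the proof of Theorem~\ref{thm:Rappaz}, together with the classical vector inequalities for the $q$--Laplacian, give for all admissible $\bv,\bw$ the uniform monotonicity bound
\[
  \mu\,\|\vare(\bv-\bw)\|_{\bL^2(\Omega)}^2 + \|\vare(\bv-\bw)\|_{\bL^q(\omega,\Omega)}^q \lesssim \int_\Omega\bigl(\polS(x,\vare(\bv))-\polS(x,\vare(\bw))\bigr):\vare(\bv-\bw)\diff x,
\]
together with the companion Lipschitz-type estimate $|\polS(x,\vare(\bv))-\polS(x,\vare(\bw))|\lesssim\mu\,|\vare(\bv-\bw)|+\omega(x)\bigl(|\vare(\bv)|+|\vare(\bw)|\bigr)^{q-2}|\vare(\bv-\bw)|$. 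Taking $\bv=\bu$, $\bw=\bu_h$ in the first bound reduces the whole theorem to estimating $\int_\Omega(\polS(x,\vare\bu)-\polS(x,\vare\bu_h)):\vare(\bu-\bu_h)\diff x$ by the stated infimum.

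Next I would use Galerkin orthogonality. Subtracting the momentum equation of Theorem~\ref{thm:Rappaz} from that of \eqref{eq:discrSmago} and testing with $\bv_h\in\bX_h$ gives $\int_\Omega(\polS(x,\vare\bu)-\polS(x,\vare\bu_h)):\vare(\bv_h)\diff x=\int_\Omega(\pe-\pe_h)\DIV\bv_h\diff x$. Let $\bV_h=\{\bv_h\in\bX_h:\int_\Omega\DIV\bv_h\,r_h\diff x=0\ \forall r_h\in M_h\}$, so that $\bu_h\in\bV_h$. Since \eqref{eq:infsuph} holds there is a divergence-preserving projection that is simultaneously quasi-optimal in $\bW^{1,2}_0(\Omega)$ and in $\bW^{1,q}_0(\omega,\Omega)$; I would take $\bw_h=\Fortin\bu\in\bV_h$ (using $\DIV\bu=0$) and $\be_h=\bu_h-\bw_h\in\bV_h$, and split $\bu-\bu_h=(\bu-\bw_h)-\be_h$. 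Quasi-optimality then makes $\|\vare(\bu-\bw_h)\|_{\bL^2(\Omega)}$ and $\|\vare(\bu-\bw_h)\|_{\bL^q(\omega,\Omega)}$ comparable to the infimum over $\bX_h$ appearing in the statement, the passage from this particular $\bw_h$ to a general competitor being the usual divergence-correction argument based on \eqref{eq:infsuph}.

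For the competitor term I would write $\polS(x,\vare\bu)-\polS(x,\vare\bu_h)=\bigl[\polS(x,\vare\bu)-\polS(x,\vare\bw_h)\bigr]+\bigl[\polS(x,\vare\bw_h)-\polS(x,\vare\bu_h)\bigr]$ and pair each bracket against $\vare(\bu-\bw_h)$. Using the companion Lipschitz-type bound, Hölder's inequality in $\bL^2(\Omega)$ and in $\bL^q(\omega,\Omega)$ (splitting both the weight and the shift $(|\vare\bu|+|\vare\bw_h|)^{q-2}$ so as to land on honest weighted $\bL^q$ norms), the a priori bound for $\|\vare\bu\|_{\bL^q(\omega,\Omega)}$ implicit in Theorem~\ref{thm:Rappaz} (coercivity of $\calJ$), and finally Young's inequality with exponents $q$ and $q'=q/(q-1)$, I expect to produce exactly $\|\vare(\bu-\bw_h)\|_{\bL^2(\Omega)}^2+\|\vare(\bu-\bw_h)\|_{\bL^q(\omega,\Omega)}^{q/(q-1)}$, up to terms $\epsilon\bigl(\|\vare(\bu-\bu_h)\|_{\bL^2(\Omega)}^2+\|\vare\be_h\|_{\bL^q(\omega,\Omega)}^q\bigr)$ that are reabsorbed on the left; the appearance of the conjugate exponent $q/(q-1)$ in the weighted term is precisely the cost of this crude use of Young's inequality in place of the shifted $N$--function calculus.

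The main obstacle is the remaining pairing $\int_\Omega(\polS(x,\vare\bu)-\polS(x,\vare\bu_h)):\vare\be_h\diff x$. By Galerkin orthogonality this equals $\int_\Omega(\pe-\pe_h)\DIV\be_h\diff x=\int_\Omega\pe\,\DIV\be_h\diff x$, since $\pe_h\in M_h$ and $\be_h\in\bV_h$. Subtracting an arbitrary element of $M_h$ (again legitimate because $\be_h\in\bV_h$) and decomposing $\pe$ according to $M_q(\omega)=\cL^2(\Omega)\oplus\cL^{q'}(\omega',\Omega)$, this is dominated by the best approximation of $\pe$ from $M_h$ in that split norm times $\|\DIV\be_h\|\lesssim\|\vare\be_h\|\lesssim\|\vare(\bu-\bu_h)\|+\|\vare(\bu-\bw_h)\|$, the divergence being pointwise controlled by the symmetric gradient. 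After absorbing the $\|\vare(\bu-\bu_h)\|$ part on the left via Young's inequality, the crux is to see that the pressure best approximation is itself governed by the velocity best approximation; this I would obtain by testing the momentum equation against the divergence-correction of the error and invoking \eqref{eq:infsuph} together with the stability estimate of Theorem~\ref{thm:Rappaz}. I expect the bookkeeping of these interlocking absorptions — keeping every hidden constant independent of $h$ and making the weighted and unweighted Young steps compatible — to be the most delicate part of the argument.
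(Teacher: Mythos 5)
Your overall skeleton (strong monotonicity of $\polS$ for $q\ge2$, Galerkin orthogonality, the Lipschitz-type bound, H\"older with the a priori bounds on $\|\vare(\bu)\|_{\bL^q(\omega,\Omega)}$ and $\|\vare(\bu_h)\|_{\bL^q(\omega,\Omega)}$, and Young's inequality with exponents $q$ and $q/(q-1)$ — which is indeed where the exponent $q/(q-1)$ comes from) matches the paper's argument. The decisive difference is your choice of intermediate discrete function. The paper takes it to be the velocity component $\bU_h$ of the \emph{Stokes projection} of $(\bu,\pe)$, defined in \eqref{eq:defofStokesprojection}. Testing with $\bv_h=\bU_h-\bu_h$ (discretely solenoidal) and using the \emph{first} equation of \eqref{eq:defofStokesprojection}, the pressure term $\int_\Omega(\pe-\pe_h)\DIV\bv_h\diff x$ is converted exactly into $\mu\int_\Omega\vare(\bu-\bU_h):\vare(\bU_h-\bu_h)\diff x$; no pressure quantity survives. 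The hypothesis $\omega\in A_{q/2}$ is then used precisely to invoke Corollary~\ref{cor:stabStokesProj} and the best-approximation property of the Stokes projection, turning $\|\vare(\bu-\bU_h)\|$ into the stated infimum.

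You instead take $\bw_h=\Fortin\bu$, which only enforces the divergence constraint, and this creates a genuine gap. Your residual term $\int_\Omega(\polS(x,\vare(\bu))-\polS(x,\vare(\bu_h))):\vare(\be_h)\diff x=\int_\Omega(\pe-r_h)\DIV\be_h\diff x$ can only be bounded by $\inf_{r_h\in M_h}\|\pe-r_h\|$ (in the norm of $M_q(\omega)$) times $\|\vare(\be_h)\|$, and that pressure best-approximation term does \emph{not} appear in the statement of the theorem. Your proposed fix — that ``the pressure best approximation is itself governed by the velocity best approximation'' via \eqref{eq:infsuph} and the momentum equation — is not correct: the inf--sup condition controls the \emph{discrete} pressure error $\|\pe_h-r_h\|$ by the residual (hence by the velocity error plus $\|\pe-r_h\|$ again), but it gives no control of the distance from the continuous pressure $\pe$ to $M_h$ in terms of velocity approximability; these are independent approximation quantities. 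To close the argument as stated you must either revert to the Stokes projection (and then its weighted stability, which is exactly where $\omega\in A_{q/2}$ enters, supplies the velocity-only best approximation via \cite[Corollary 4.2]{DOS:19}), or accept an additional $\inf_{r_h\in M_h}\|\pe-r_h\|$ term on the right-hand side, which proves a weaker result than the theorem claims.
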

\begin{proof}
We will follow, for instance, the derivation of the estimates of \cite[Section 4]{MR2991839}. Namely, by conformity, we have that, for all $\bv_h \in \bX_h$
\begin{multline*}
  \AJS{2}\mu \int_\Omega \vare(\bu - \bu_h) :\vare(\bv_h) \diff x+ \AJS{2}\int_\Omega \left( |\vare(\bu)|^{q-2} \vare(\bu) - |\vare(\bu_h)|^{q-2} \vare(\bu_h) \right): \vare(\bv_h) \omega \diff x \\ = \int_\Omega (\pe - \pe_h) \DIV \bv_h \diff x.
\end{multline*}
Denote now by $(\bU_h,P_h) \in \bX_h \times M_h$ the Stokes projection of $(\bu,\pe)$, as defined in \eqref{eq:defofStokesprojection}.
Setting $\bv_h = \bU_h - \bu_h$, \EO{the previous} identity reduces to
\begin{multline*}
  \AJS{2}\mu \int_\Omega \vare(\bu - \bu_h) :\vare(\bU_h - \bu_h) \diff x\\+ \AJS{2}\int_\Omega \left( |\vare(\bu)|^{q-2} \vare(\bu) - |\vare(\bu_h)|^{q-2} \vare(\bu_h) \right) :\vare(\bU_h - \bu_h) \omega \diff x  \\ = \AJS{2}\mu \int_\Omega \vare(\bu-\bU_h) :\vare( \bU_h - \bu_h) \diff x,
\end{multline*}
where we used the fact that $\bU_h - \bu_h$ is discretely solenoidal and that \EO{$(\bU_h,P_h)$} is the Stokes projection of \EO{$(\bu,\pe)$}. This identity can be used to derive
\begin{multline}
\label{eq:needforql2}
  \EO{2}\AJS{\mu} \| \vare(\bu - \bu_h )\|_{\bL^2(\Omega)}^2  \\ 
  +
  \AJS{2}\int_\Omega \left( |\vare(\bu)|^{q-2} \vare(\bu) - |\vare(\bu_h)|^{q-2} \vare(\bu_h) \right): \vare(\bu - \bu_h) \omega \diff x \\ 
= \EO{-2\mu\| \vare(\bu - \bU_h )\|_{\bL^2(\Omega)}^2 +  4\mu \int_{\Omega}\vare(\bu - \bu_h ):\vare(\bu - \bU_h ) \diff x}
\\
\EO{+ \AJS{2}\int_\Omega \left( |\vare(\bu)|^{q-2} \vare(\bu) - |\vare(\bu_h)|^{q-2} \vare(\bu_h) \right): \vare(\bu - \bU_h) \omega \diff x.}
\end{multline}
\EO{We thus invoke Young's inequality to obtain the bound
\begin{multline}
\label{eq:needforql3}
\AJS{\mu} \| \vare(\bu - \bu_h )\|_{\bL^2(\Omega)}^2 + \AJS{2}\int_\Omega \left( |\vare(\bu)|^{q-2} \vare(\bu) - |\vare(\bu_h)|^{q-2} \vare(\bu_h) \right): \vare(\bu - \bu_h) \omega \diff x \\
\leq
2 \mu \| \vare(\bu - \bU_h )\|_{\bL^2(\Omega)}^2 
+
\AJS{2}\int_\Omega \left( |\vare(\bu)|^{q-2} \vare(\bu) - |\vare(\bu_h)|^{q-2} \vare(\bu_h) \right): \vare(\bu - \bU_h) \omega \diff x.
\end{multline}

Now,} since $q \geq 2$, we \AJS{can use \cite[Lemma 4.4, \EO{Chapter I}]{MR1230384} (see also \cite[Theorem 5.3.3]{CiarletBook}) to deduce
\begin{equation}
\label{eq:NeededForUniqueness}
 \| \vare(\bu - \bu_h )\|_{\bL^q(\omega,\Omega)}^q \lesssim\int_\Omega \left( |\vare(\bu)|^{q-2} \vare(\bu) - |\vare(\bu_h)|^{q-2} \vare(\bu_h) \right) :\vare(\bu - \bu_h) \omega \diff x.
\end{equation}
In addition, \cite[Lemma 2.1, \EO{estimate (2.1a)}]{MR1192966} with $\delta = 0$ allows us to write
\begin{multline*}
  \int_\Omega \left( |\vare(\bu)|^{q-2} \vare(\bu) - |\vare(\bu_h)|^{q-2} \vare(\bu_h) \right): \vare(\bu - \bU_h) \omega \diff x \\
  \lesssim \int_\Omega |\vare(\bu-\bu_h)| \left( |\vare(\bu)| + |\vare(\bu_h)| \right)^{q-2} |\vare(\bu - \bU_h)| \omega \diff x.
\end{multline*}
These estimates can be combined to infer}
\begin{multline}
\label{eq:almostthere}
  \| \vare(\bu - \bu_h )\|_{\bL^2(\Omega)}^2 + \| \vare(\bu - \bu_h )\|_{\bL^q(\omega,\Omega)}^q \lesssim \| \vare(\bu - \bU_h) \|_{\bL^2(\Omega)}^2 \\ + 
  \int_\Omega |\vare(\bu-\bu_h)| \left( |\vare(\bu)| + |\vare(\bu_h)| \right)^{q-2} |\vare(\bu - \bU_h)| \omega \diff x =: \EO{\| \vare(\bu - \bU_h) \|_{\bL^2(\Omega)}^2 +\wp.}
\end{multline}
\EO{Let us examine $\wp$} in more detail. Using that $\bu$ and $\bu_h$ minimize $\calJ$ over $\bX_q(\omega)$ and $\bX_h$, respectively, we obtain that their $\bW^{1,q}(\omega,\Omega)$ norms are uniformly bounded with bounds that only depend on the problem data. Thus, H\"older's inequality implies that
\begin{align*}
  \wp &\leq
    \| |\vare(\bu)| + |\vare(\bu_h)| \|^{q-2}_{\bL^q(\omega,\Omega)
  } \| \vare(\bu - \bu_h) \|_{\bL^q(\omega,\Omega)} \| \vare(\bu - \bU_h) \|_{\bL^q(\omega,\Omega)} \\
  &\leq \gamma \| \vare(\bu - \bu_h) \|_{\bL^q(\omega,\Omega)}^q + C(\bef,\gamma, \AJS{q}) \| \vare(\bu - \bU_h) \|_{\bL^q(\omega,\Omega)}^{q/(q-1)}.
\end{align*}
Choosing $\gamma$ sufficiently small we can absorb the first term of this estimate for $\wp$ on the left hand side of \eqref{eq:almostthere}. \EO{This concludes the proof.}
\end{proof}

\EO{
\begin{remark}[approximation]
Notice that, since $q\geq 2$ and $\omega \in A_{q/2}$, we can use the stability of the Stokes projection that we proved in Corollary~\ref{cor:stabStokesProj} to invoke \cite[Corollary 4.2]{DOS:19} and conclude that Theorem~\ref{thm:errestSmago} implies best approximation properties
\end{remark}
}

As a corollary, we provide an error estimate for the Smagorinski model \eqref{eq:Smagorinski}.

\begin{corollary}[error estimate]
\label{col:errestSmago}
Assume that $\alpha \in (-1,\tfrac12)$. Let the pair $(\bu,\pe) \in \bX_3(\dist(\cdot,\partial\Omega)^\alpha) \times M_3(\dist(\cdot,\partial\Omega)^\alpha)$ solve \eqref{eq:nlStokes} with the stress tensor given by \eqref{eq:Smagorinski}, and $(\bu_h,\pe_h) \in \bX_h \times M_h$ be its finite element approximation. We thus have the following error estimate
\begin{multline*}
  \| \vare(\bu - \bu_h )\|_{\bL^2(\Omega)}^2 + \| \vare(\bu - \bu_h )\|_{\bL^3(\dist(\cdot,\partial\Omega)^\alpha,\Omega)}^3 \\ \AJS{\lesssim
  \| \vare(\bu - \bU_h) \|_{\bL^2(\Omega)}^2 + \| \vare(\bu - \bU_h) \|_{\bL^3(\dist(\cdot,\partial\Omega)^\alpha,\Omega)}^{3/2} },
\end{multline*}
where the hidden constant is independent of $\bu$, $\pe$, $h$, \EO{and $(\bU_h,P_h)$; the latter being the Stokes projection of $(\bu,\pe)$}.
\end{corollary}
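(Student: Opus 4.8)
The plan is to specialize Theorem~\ref{thm:errestSmago} to the concrete stress tensor \eqref{eq:Smagorinski}. The key observation is that the Smagorinski tensor $\polS(x,\bQ^s) = (\mu + \mu_{NL}\dist(x,\partial\Omega)^\alpha |\bQ^s|)\bQ^s$ matches the general form \eqref{eq:genSmagorinskivisco}, namely $\polS(x,\bQ^s) = (\mu + \omega(x)|\bQ^s|^{q-2})\bQ^s$, upon taking $q=3$ and $\omega(x) = \mu_{NL}\dist(x,\partial\Omega)^\alpha$; here the constant $\mu_{NL}>0$ is absorbed into the hidden constants. So the first step is to verify that this weight is admissible, i.e.\ that $\omega \in A_{q/2} = A_{3/2}$. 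It is classical that $\dist(\cdot,\partial\Omega)^\alpha$ is a Muckenhoupt weight in $A_p$ for a Lipschitz (in particular, convex polyhedral) domain precisely when $\alpha \in (-1, (p-1))$; in our case $p = 3/2$ gives the admissible range $\alpha \in (-1, \tfrac12)$, which is exactly the hypothesis of the corollary.

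Second, I would check that the functional-analytic setting of Theorem~\ref{thm:errestSmago} reduces correctly: with $q=3$ and $\omega = \dist(\cdot,\partial\Omega)^\alpha$ the spaces $\bX_q(\omega)$ and $M_q(\omega)$ become exactly $\bX_3(\dist(\cdot,\partial\Omega)^\alpha)$ and $M_3(\dist(\cdot,\partial\Omega)^\alpha)$, which appear in the statement of the corollary. One also needs the existence of $(\bu,\pe)$ in this setting, which is covered by Theorem~\ref{thm:Rappaz} (the weight $\mu_{NL}\dist(\cdot,\partial\Omega)^\alpha$ is in $A_3$ since $\alpha \in (-1,\tfrac12) \subset (-1,2)$), and the existence of the discrete solution $(\bu_h,\pe_h)$ follows along the lines of that theorem as noted before \eqref{eq:discrSmago}. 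With these identifications in place, the conclusion of Theorem~\ref{thm:errestSmago} reads verbatim as the claimed estimate after substituting $q=3$: the exponent $q/(q-1)$ becomes $3/2$ and $\bL^q(\omega,\Omega)$ becomes $\bL^3(\dist(\cdot,\partial\Omega)^\alpha,\Omega)$.

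I do not expect any substantial obstacle; this is essentially a bookkeeping exercise. The only point requiring a small amount of care is the Muckenhoupt membership of the distance weight with the stated exponent range, and the verification that the convexity/polyhedral assumption on $\Omega$ is compatible with all the invoked results (it is, since everything in Section~\ref{sec:Rappazz} is already stated for convex polyhedra). I would phrase the proof as: observe that \eqref{eq:Smagorinski} is of the form \eqref{eq:genSmagorinskivisco} with $q=3$ and $\omega = \mu_{NL}\dist(\cdot,\partial\Omega)^\alpha$; note that $\dist(\cdot,\partial\Omega)^\alpha \in A_{3/2}$ whenever $\alpha \in (-1,\tfrac12)$; and apply Theorem~\ref{thm:errestSmago}.

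\begin{proof}
We simply note that the stress tensor \eqref{eq:Smagorinski} is a particular instance of \eqref{eq:genSmagorinskivisco}. Indeed, upon setting $q = 3$ and $\omega(x) = \mu_{NL}\dist(x,\partial\Omega)^\alpha$, and absorbing the constant $\mu_{NL}$ into the hidden constants, we have
\[
  \left( \mu + \mu_{NL}\dist(x,\partial\Omega)^\alpha |\bQ^s| \right) \bQ^s = \left( \mu + \omega(x) |\bQ^s|^{q-2} \right) \bQ^s.
\]
It remains to verify that this weight satisfies the hypotheses of Theorem~\ref{thm:errestSmago}. Since $\Omega$ is a convex polyhedron, it is in particular Lipschitz, and it is well known that $\dist(\cdot,\partial\Omega)^\alpha \in A_p$ if and only if $\alpha \in (-1, p-1)$. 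Hence, for $q = 3$, the condition $\omega \in A_{q/2} = A_{3/2}$ is equivalent to $\alpha \in (-1, \tfrac12)$, which is precisely our assumption. Moreover, $\alpha \in (-1,\tfrac12) \subset (-1,2)$ guarantees $\dist(\cdot,\partial\Omega)^\alpha \in A_3$, so that Theorem~\ref{thm:Rappaz} applies and provides the solution $(\bu,\pe) \in \bX_3(\dist(\cdot,\partial\Omega)^\alpha) \times M_3(\dist(\cdot,\partial\Omega)^\alpha)$, while existence of $(\bu_h,\pe_h)$ follows as indicated before \eqref{eq:discrSmago}. With these identifications, the conclusion follows immediately from Theorem~\ref{thm:errestSmago}, upon noting that $q/(q-1) = 3/2$ when $q = 3$.
\end{proof}
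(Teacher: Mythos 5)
Your proposal is correct and follows essentially the same route as the paper: identify \eqref{eq:Smagorinski} as the case $q=3$, $\omega = \dist(\cdot,\partial\Omega)^\alpha$ of \eqref{eq:genSmagorinskivisco}, verify $\dist(\cdot,\partial\Omega)^\alpha \in A_{3/2}$ exactly when $\alpha \in (-1,\tfrac12)$ (the paper quotes the general criterion $\alpha \in (-(d-k),(d-k)(t-1))$ for the distance to a $k$--dimensional manifold, which with $d=3$, $k=2$, $t=3/2$ gives the same range), and then apply Theorem~\ref{thm:errestSmago}.
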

\begin{proof}
According to \cite{MR3215609} and \cite[Lemma 2.3(vi)]{MR1601373}, in $d$ dimensions, if $\P$ denotes a $k$--dimensional compact Lipschitzian manifold, with $k \in \{0,1, \ldots d- 1\}$,
then $\dist(\cdot,\P)^\alpha \in A_t$ provided
\[
  \alpha \in ( -(d-k), (d-k)(t-1) ).
\]
In our setting $d=3$, $\P = \partial \Omega$, so that $k=2$, and $t = 3/2$. Thus, requiring $\alpha \in (-1,\tfrac12)$ guarantees that $\dist(\cdot,\partial\Omega)^\alpha \in A_{3/2}$. We can then apply Theorem~\ref{thm:errestSmago} to conclude.
\end{proof}

\begin{remark}[novelty]
Notice that, \EO{since} $\dist(\cdot,\partial\Omega)^\alpha \notin A_t(\Omega)$ for any $t$, \EO{our} new results, namely Theorem \ref{thm:velest} and Corollaries \ref{col:presest} and \ref{cor:stabStokesProj} are, once again, essential in deducing Corollary~\ref{col:errestSmago}.
\end{remark}

\begin{remark}[$q<2$]
\label{rem:qlesstwoSmago}
The error \EO{estimate} of Theorem~\ref{thm:errestSmago} can be extended to the case $q \in (1,2)$ provided $\omega \in A_{q'/2}$ \EO{upon repeating} the same arguments that lead to \eqref{eq:needforql3}. In this case, however,
\[
  \int_\Omega \left( |\vare(\bu)|^{q-2} \vare(\bu) - |\vare(\bu_h)|^{q-2} \vare(\bu_h) \right): \vare(\bu - \bu_h) \omega \diff x
\]
is dropped because it is nonnegative. \EO{Observe that the function $|x|^q$, with $q>1$, is convex and thus $(|x|^q- |y|^q)(x-y) \geq 0$ for every $x,y \in \mathbb{R}$.} 
 To deal with the last term on the right hand side of \eqref{eq:needforql3}, we apply \cite[Lemma 2.1]{MR1192966} with $\delta =1$ and obtain that
\begin{align*}
& \EO{\int_\Omega \left( |\vare(\bu)|^{q-2} \vare(\bu) - |\vare(\bu_h)|^{q-2} \vare(\bu_h) \right): \vare(\bu - \bU_h) \omega \diff x} 
\\
& \qquad
\lesssim \int_\Omega \left( |\vare(\bu)| + |\vare(\bu_h)| \right)^{q-1} |\vare(\bu-\bU_h)| \omega \diff x 
\\
& \qquad  
\leq \left( \| \vare(\bu) \|_{\bL^q(\omega,\Omega)} + \| \vare(\bu_h) \|_{\bL^q(\omega,\Omega)} \right)^{q-1} \| \vare(\bu- \bU_h)\|_{\bL^q(\omega,\Omega)},
\end{align*}
and argue, again, that the $\bW^{1,q}_0(\omega,\Omega)$ norms of $\bu$ and $\bu_h$ must be bounded by data. In conclusion, we have proved the estimate
\[
  \| \vare(\bu - \bu_h )\|_{\bL^2(\Omega)}^2 \lesssim \AJS{ \| \vare(\bu - \bU_h) \|_{\bL^2(\Omega)}^2 + \| \vare(\bu - \bU_h) \|_{\bL^q(\omega,\Omega)} }.
\]
\end{remark}

\subsection{Convection}
\label{sub:convectionSmago}

Let us, as a final extension, consider a generalization of \eqref{eq:nlStokes} with stress \eqref{eq:genSmagorinskivisco} that takes into account convection. \EO{This extends the analysis performed in \cite{MR3488119,MR3837915}. To be precise,} we consider the problem
\begin{equation}
\label{eq:Smagowithconv}
  \begin{dcases}
    -\DIV \polS(x,\varepsilon(\bu)) + (\bu\cdot\GRAD)\bu + \GRAD \pe = -\DIV \bef, & \text{ in } \Omega, \\
    \DIV \bu = 0, & \text{ in } \Omega, \\
    \bu = \boldsymbol0, & \text{ on } \partial \Omega.
  \end{dcases}
\end{equation}

The following is our main existence result.

\begin{theorem}[existence]
Let $\Omega \subset \R^3$ be a convex polyhedron, $q \in (2,\infty)$, $\omega \in A_q$, and $\bef \in \bL^2(\Omega) \AJS{+} \bL^{q'}(\omega',\Omega)$. Then, there is $(\bu,\pe) \in \bX_q(\omega) \times M_q(\omega)$ such that
\[
  \begin{dcases}
    \int_\Omega \polS(x,\vare(\bu)) :\vare(\bv) \diff x + \int_\Omega (\bu\cdot\GRAD)\bu \cdot \bv \diff x - \int_\Omega \pe \DIV \bv \diff x = \int_\Omega \bef: \GRAD \bv \diff x , \\
    \int_\Omega \DIV \bu r \diff x  = 0,
  \end{dcases}
\]
for all $(\AJS{\bv},q) \in \bX_q(\omega) \times M_q(\omega)$, where $\polS$ is given by \eqref{eq:genSmagorinskivisco}. In addition, if $\mu$ is sufficiently large, or $\bef$ sufficiently small, we obtain that $\bu$ is unique.
\end{theorem}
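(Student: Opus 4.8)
The strategy is the classical one for the stationary Navier–Stokes-type problem: combine the monotone/variational structure of the Smagorinski stress with a compactness argument to handle the convective term, and then treat uniqueness as a small-data perturbation. I would begin by recording the properties of the trilinear form $b(\bu,\bv,\bw) = \int_\Omega (\bu\cdot\GRAD)\bv\cdot\bw \diff x$ on $\bX_q(\omega)$. Since $q>2$, we have $\bX_q(\omega) \subset \bW^{1,2}_0(\Omega) \hookrightarrow \bL^6(\Omega)$ compactly into $\bL^s(\Omega)$ for $s<6$, so $b$ is well defined and, crucially, $b(\bu,\bv,\bv)=0$ for solenoidal $\bu$ vanishing on $\partial\Omega$, while $\bu\mapsto b(\bu,\bu,\cdot)$ is weakly-strongly continuous: if $\bu_k \rightharpoonup \bu$ in $\bX_q(\omega)$ then $\bu_k\to\bu$ strongly in $\bL^4(\Omega)$ and $b(\bu_k,\bu_k,\bv)\to b(\bu,\bu,\bv)$ for each test field $\bv$.

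\emph{Existence.} I would set up a Galerkin scheme over an increasing sequence of finite-dimensional solenoidal subspaces $\bV_m \subset \bX_{q,\DIV}(\omega)$ whose union is dense, obtaining $\bu_m \in \bV_m$ with
\[
  \int_\Omega \polS(x,\vare(\bu_m)):\vare(\bv) \diff x + b(\bu_m,\bu_m,\bv) = \int_\Omega \bef:\GRAD\bv \diff x \quad \forall \bv \in \bV_m;
\]
existence of $\bu_m$ for each $m$ follows from a standard Brouwer fixed-point argument, using that the map is continuous and that $b(\bu_m,\bu_m,\bu_m)=0$ gives the a priori bound. Testing with $\bv=\bu_m$ and invoking the coercivity of $\polS$ coming from \eqref{eq:genSmagorinskivisco} together with the weighted Korn inequality of \cite[Theorem 5.15]{MR2643399} (exactly as in the proof of Theorem~\ref{thm:Rappaz}) yields $\|\vare(\bu_m)\|_{\bL^2(\Omega)}^2 + \|\vare(\bu_m)\|_{\bL^q(\omega,\Omega)}^q \lesssim 1 + \|\bef\|^2$, uniformly in $m$. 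Hence a subsequence satisfies $\bu_m \rightharpoonup \bu$ in $\bX_q(\omega)$ and $\polS(x,\vare(\bu_m)) \rightharpoonup \polD$ in $\bL^{q'}(\omega',\Omega)+\bL^2(\Omega)$. Passing to the limit in the Galerkin identity, using the weak-strong continuity of the convective term, gives
\[
  \int_\Omega \polD:\vare(\bv)\diff x + b(\bu,\bu,\bv) = \int_\Omega \bef:\GRAD\bv\diff x \quad \forall \bv \in \bX_{q,\DIV}(\omega).
\]
To identify $\polD = \polS(x,\vare(\bu))$ I would use Minty's trick: test the Galerkin equation with $\bu_m$, use $b(\bu_m,\bu_m,\bu_m)=0$ to get $\int_\Omega \polS(x,\vare(\bu_m)):\vare(\bu_m) \to \int_\Omega\bef:\GRAD\bu - b(\bu,\bu,\bu) = \int_\Omega\polD:\vare(\bu)$, and then exploit monotonicity of $\polS$ in the symmetric gradient (the map $\bQ^s\mapsto(\mu+\omega|\bQ^s|^{q-2})\bQ^s$ is strictly monotone). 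This requires the usual care because $\polS$ has a degenerate weight $\omega$, but the monotonicity inequality is pointwise and integrates against $\omega$, so the standard argument goes through; since $\bu$ itself is an admissible test function here (unlike in Theorem~\ref{thm:convergenceBulicek}), no additional localization is needed. Recovering the pressure $\pe \in M_q(\omega)$ is then done verbatim as in Theorem~\ref{thm:Rappaz}, applying the inf--sup condition \eqref{eq:infsuppres} once with $(2,1)$ and once with $(q,\omega)$ to the functional $\bv\mapsto \int_\Omega\bef:\GRAD\bv - \int_\Omega\polS(x,\vare(\bu)):\vare(\bv) - b(\bu,\bu,\bv)$, noting that $\bv\mapsto b(\bu,\bu,\bv)$ is bounded on $\bX_q(\omega)$.

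\emph{Uniqueness for large $\mu$ or small data.} Given two solutions $(\bu_1,\pe_1)$, $(\bu_2,\pe_2)$, set $\bw=\bu_1-\bu_2$, subtract the two weak formulations, and test with $\bw$. The pressure terms vanish by solenoidality, the $\polS$-difference term is bounded below by $\mu\|\vare(\bw)\|_{\bL^2(\Omega)}^2$ (dropping the nonnegative degenerate monotone part), and the convective difference, using $b(\bu_1,\bu_1,\bw)-b(\bu_2,\bu_2,\bw) = b(\bw,\bu_1,\bw) + b(\bu_2,\bw,\bw) = b(\bw,\bu_1,\bw)$, is estimated by $\|\bw\|_{\bL^4(\Omega)}^2\|\GRAD\bu_1\|_{\bL^2(\Omega)} \lesssim \|\vare(\bw)\|_{\bL^2(\Omega)}^2 \|\bef\|$ via Sobolev embedding, Korn, and the a priori bound on $\bu_1$. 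Thus $(\mu - C\|\bef\|)\|\vare(\bw)\|_{\bL^2(\Omega)}^2 \le 0$, forcing $\bw=\boldsymbol0$ whenever $\mu$ exceeds, or $\|\bef\|$ falls below, the explicit threshold $C$. I expect the main obstacle to be the limit passage identifying $\polD$: one must verify that the degenerate weight $\omega$ does not spoil the Minty argument, i.e.\ that the weak limit of $\polS(x,\vare(\bu_m))$ against $\vare(\bu_m)$ can genuinely be controlled — this is where the a priori bound in $\bL^q(\omega,\Omega)$ (not merely $\bL^2$) is indispensable, and where I would lean most heavily on the monotone-operator machinery of \cite[Chapter 2]{MR3014456} adapted to weighted spaces.
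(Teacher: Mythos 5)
Your proof is correct, and the uniqueness argument coincides with the paper's, but your existence argument takes a genuinely different route. The paper does not use a Galerkin scheme at all: it isolates the convective term as an operator $\mathfrak{NL}$ on $\bX_{q,\DIV}(\omega)$, observes that the embedding $\bX_{q,\DIV}(\omega) \hookrightarrow \bL^s(\Omega)$ makes this operator compact, and then applies a fixed--point theorem (Schauder/Leray--Schauder) to the solution map of the convection--free problem, whose well--posedness was already established in Theorem~\ref{thm:Rappaz} by minimizing the strictly convex functional $\calJ$. This reuses the variational structure as a black box and avoids any Minty--type identification. Your route --- Galerkin approximation, a priori bound from $b(\bu_m,\bu_m,\bu_m)=0$ plus coercivity and the weighted Korn inequality, weak--strong continuity of the convective term, and Minty's trick to identify the weak limit of the stress --- is more self--contained and is the standard monotone--operator treatment; your observation that $\bu$ is here an admissible test function (unlike in Theorem~\ref{thm:convergenceBulicek}) is exactly the point that makes the plain Minty argument legitimate, and the pointwise monotonicity of $\bQ^s \mapsto (\mu+\omega(x)|\bQ^s|^{q-2})\bQ^s$ integrates harmlessly against the weight. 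The trade--off: the paper's argument is shorter because Theorem~\ref{thm:Rappaz} already carries the nonlinear elliptic structure, while yours does not presuppose solvability of the convection--free problem and would adapt more directly to situations where the stress is monotone but not a potential operator. One cosmetic remark: your a priori bound should read, as in \eqref{eq:stabforuniqueness}, with $\|\bef_1\|_{\bL^2(\Omega)}^2 + \|\bef_2\|_{\bL^{q'}(\omega',\Omega)}^{q'}$ on the right for a decomposition $\bef=\bef_1+\bef_2$, the two components being dualized against the $\bL^2$ and $\bL^q(\omega)$ parts of $\vare(\bu_m)$ separately.
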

\begin{proof}
Recall the definition of $\bX_{q,\DIV}(\omega)$. Define the operator
\[
  \mathfrak{NL}:\bX_{q,\DIV}(\omega) \times \bX_{q,\DIV}(\omega) \to \bX_{q,\DIV}(\omega)'
\]
by
\[
  \langle \mathfrak{NL}(\bu,\bv),\bw \rangle = \int_\Omega (\bu\cdot\GRAD)\bv \cdot \bw \diff x.
\]
The fact that
\[
  \| \bw \|_{\bX_{q,\DIV}(\omega)} = \| \GRAD \bw \|_{\bL^2(\Omega)} + \| \GRAD \bw \|_{\bL^q(\omega,\Omega)}
\]
shows that this is a compact operator in $\bX_{q,\DIV}(\omega)$. A standard fixed point argument 
then yields existence of solutions.

Existence of the pressure follows the argument of Theorem~\ref{thm:Rappaz}. Notice also that testing with $\bv = \bu$ yields, \AJS{for any decomposition $\bef = \bef_1 + \bef_2$, \EO{with $\bef_1 \in \bL^2(\Omega)$ and $\bef_2 \in \bL^{q'}(\omega',\Omega)$,} the following estimate
\begin{multline}
\label{eq:stabforuniqueness}
  \mu \| \vare(\bu) \|_{\bL^2(\Omega)}^2 + \left( 1 + \frac1{q'} \right) \| \vare(\bu) \|_{\bL^q(\omega,\Omega)}^q  \\ \leq \frac{K(2,1)^2}{\EO{4}\mu}\|\bef_1\|_{\bL^2(\Omega)}^2 + \frac{K(q,\omega)^{q'}}{q'}\|\bef_2\|_{\bL^{q'}(\omega',\Omega)}^{q'},
\end{multline}
where we denoted by $K(q,\omega)$ the hidden constant in Korn's inequality \eqref{eq:WeightedKorn}}.

To obtain uniqueness with small data, we follow the classical uniqueness ideas for the Navier Stokes equations \cite[Theorem 2.1.3]{MR1846644}. Assuming that there are two solutions $\bu_1,\bu_2$, and denoting the difference $\bu = \bu_1 - \bu_2$ yields
\AJS{
\begin{align*}
  \int_\Omega \left( \polS(x,\vare(\bu_1)) - \polS(x,\vare(\bu_2)) \right):\vare(\bu)\diff x &= -\int_\Omega (\bu\cdot\GRAD)\bu_1\cdot\bu \diff x \\ &\lesssim \| \GRAD \bu \|_{\bL^2(\Omega)}^2 \| \GRAD \bu_1 \|_{\bL^2(\Omega)},
\end{align*}
where, per usual, we employed the skew symmetry of convection when the first argument is solenoidal
\[
  \int_\Omega (\bu_2\cdot \GRAD)\bu \cdot \bu \diff x = 0.
\]
Since $q >2$ a similar argument to the one that led to \eqref{eq:NeededForUniqueness} implies that
\[
  \int_\Omega \left( \polS(x,\vare(\bu_1)) - \polS(x,\vare(\bu_2)) \right):\vare(\bu)\diff x \gtrsim \| \vare(\bu) \|_{\bL^2(\Omega)}^2 + \| \vare(\bu) \|_{\bL^q(\omega,\Omega)}^q.
\]
This, combined with Korn's inequality \eqref{eq:WeightedKorn} for $q = 2$ and $\omega = 1$, and \eqref{eq:stabforuniqueness}, implies
\[
  \| \vare(\bu) \|_{\bL^2(\Omega)}^2 \lesssim \frac{K(2,1)^3}{\EO{\sqrt{\mu}}} \left[ \frac{K(2,1)^2}{\EO{4}\mu}\|\bef_1\|_{\bL^2(\Omega)}^2 + \frac{K(q,\omega)^{q'}}{q'}\|\bef_2\|_{\bL^{q'}(\omega',\Omega)}^{q'} \right]^{\EO{\frac{1}{2}}} \| \vare(\bu) \|_{\bL^2(\Omega)}^2.
\]
The assumption} that $\bef$ is sufficiently small or $\mu$ sufficiently large, allows us to absorb this term on the left hand side and conclude uniqueness.
\end{proof}

\section*{Acknowledgements}
The authors would like to thank Vivette Girault and Johnny Guzm\'an for interesting discussions and hints that led to the proof of Theorem~\ref{thm:stabclassic}. They would also like to thank Leo Rebholz for providing insight and useful references regarding the Smagorinski model.

\appendix

\section{Proof of Theorem~\ref{thm:stabclassic}}
\label{sec:GNS}

The purpose of this, supplementary, section is to detail what changes, if any, are necessary to translate the results of \cite[Theorem 4.1]{DOS:19} to the case that we are interested in here. We comment that the main difference here is that, in the first \EO{equation} of the definition of the Stokes projection, reference \cite{DOS:19} employs gradients (see \cite[formula $(1.1)$]{DOS:19}), while we employ symmetric gradients. While in the \EO{continuous} case this only amounted to a redefinition of the pressure, it rarely happens in practice that we have $\DIV \bX_h \subset M_h$ and so this change of variables cannot be performed.

Let us begin then with some notation. We define 
\[
  a(\bv,\bw) = \AJS{2}\mu \int_\Omega \vare(\bv):\vare(\bw) \diff x, \qquad b(\bv,q) = -\int_\Omega q \DIV \bv \diff x.
\]
We now realize that the heart of the matter in the proof of \cite[Theorem 4.1]{DOS:19} is the estimate provided in \cite[formula $(4.1)$]{DOS:19}. Thus, if we can prove 
\begin{equation}
\label{eq:4.1Duran}
  \| \vare(\bu_h) \|_{\bL^2(\omega,\Omega)} \lesssim \| \vare(\bu) \|_{\bL^2(\omega,\Omega)} + \| p \|_{L^2(\omega,\Omega)},
\end{equation}
the rest of the proof follows \emph{verbatim}. We \EO{thus} focus on the proof of \eqref{eq:4.1Duran}. This is derived in several steps.

\subsection{Approximate Green function}
\label{sub:ApproxGreen}

We begin the proof of \eqref{eq:4.1Duran} by defining suitable approximate Green functions. Let $z \in \Omega$ be such that $z \in \mathring{T}_z$, for some $T_z \in \T$, and $\tilde\delta_z$ be a regularized Dirac delta function that satisfies the properties:
\begin{enumerate}[$\bullet$]
  \item $\tilde \delta_z \in C_0^\infty(T_z)$;
  
  \item $\int_\Omega \tilde\delta_z \diff x = 1$;
  
  \item $\| \tilde \delta_z \|_{L^t(T_z)} \lesssim h^{-3/t'}$, for $t \in [1,\infty]$;
  
  \item $\int_\Omega \tilde\delta_z \bv_h \diff x = \bv_h(z)$ for all $\bv_h \in \bX_h$.
\end{enumerate}
The construction of such a regularized Dirac delta is presented in \cite[Section 1]{MR2121575}.

Let $z \in \Omega$ \AJS{be such that $z \in \mathring{T}_z$, for some $T_z \in \T$,} and $i,j \in \{1,2,3\}$. We define the approximate (derivative of the) Green function as the pair $(\frakG,\frakq) \in \bW^{1,2}_0(\Omega) \times \cL^2(\Omega)$ that solves
\begin{equation}
\label{eq:approxGreen}
  \begin{dcases}
    a(\frakG,\bv) + b(\bv,\frakq) = \int_\Omega \tilde \delta_z \vare(\bv)_{i,j} \diff x & \forall \bv \in \bW^{1,2}_0(\Omega), \\
    b(\frakG,q) = 0 & \forall q \in \cL^2(\Omega).
  \end{dcases}
\end{equation}
We also define the Stokes projection $(\frakG_h,\frakq_h) \in \bX_h \times M_h$ of $(\frakG,\frakq)$ via
\begin{equation}
\label{eq:approxGreenh}
  \begin{dcases}
    a(\frakG_h,\bv_h) + b(\bv_h,\frakq_h) = \int_\Omega \tilde \delta_z \vare(\bv_h)_{i,j} \diff x & \forall \bv_h \in \bX_h, \\
    b(\frakG_h,q_h) = 0 & \forall q_h \in M_h.
  \end{dcases}
\end{equation}

As one last ingredient, we introduce regularized distances. For $y \in \Omega$, we define
\[
  \sigma_y(x) = \left( |x-y|^2 + \kappa^2 h^2 \right)^{1/2},
\]
where \EO{$\kappa\geq1$} is independent of $h$ but must satisfy that $\kappa h \leq R$ where $R = \diam \Omega$. The properties of this weight are given in \cite[Section 1]{MR2121575} and \cite[Section 1.7]{MR342245}.

\subsection{Reduction to weighted estimates}
\label{sub:ReductionToWeight}

Having introduced the functions $(\frakG,\frakq)$ and their approximations $(\frakG_h,\frakq_h)$ we can proceed with the proof of \eqref{eq:4.1Duran}. Upon realizing that the only property of $a$ that is used in step 2 of the proof of \cite[Theorem 4.1]{DOS:19} is symmetry, we can follow the arguments without any change to arrive at
\begin{align*}
  \int_\Omega \omega |\vare(\bu_h)_{i,j}|^2 \diff z 
  &\lesssim 
  \int_\Omega \omega \left[\int_\Omega \vare(\bu):\vare(\bE) \diff x\right]^2 \diff \AJS{z} 
  + 
  \int_\Omega \omega \left[ \int_\Omega p \DIV \bE \diff x \right]^2 \diff z 
  \\ &
  + 
  \int_\Omega \omega \left[\fint_{T_z} |\vare(\bu)| \diff x \right]^2 \diff z,
\end{align*}
where we denoted $\bE = \frakG - \frakG_h$. In conclusion, \eqref{eq:4.1Duran} holds provided we can show that the estimate
\begin{equation}
\label{eq:ApproxGreenEstimate}
  \sup_{z \in \Omega} \| \sigma_z^{\mu/2} \vare( \frakG-\frakG_h) \|_{\bL^2(\Omega)} \lesssim h^{\lambda/2}
\end{equation}
holds for all $\nu \in (0,1/2)$, $\lambda \in (0,\nu/2)$ and $\mu = 3+\lambda$. \AJS{Notice that here we are following the notation of \cite{MR342245} so $\mu$ is not the viscosity}.

The rest of this Appendix is dedicated to indicate what changes, if any, are necessary to prove \eqref{eq:ApproxGreenEstimate}.

\subsection{Proof of \eqref{eq:ApproxGreenEstimate}}
\label{sub:GNSChanges}

Notice, first of all, that \eqref{eq:ApproxGreenEstimate} is the analogue of \cite[formula $(1.46)$]{MR342245}, so we follow this reference to indicate what changes are necessary. By changing gradients to symmetric gradients, where appropriate, we can reach the analogue of \cite[formula $(2.3)$]{MR342245}
\begin{align*}
  \int_\Omega \sigma_z^\mu |\vare(\bE)|^2 \diff x &= \int_\Omega \vare(\bE):\vare(\sigma_z^\mu(\frakG - P_h (\frakG) )) \diff x + \int_\Omega \vare(\bE):\vare(\bpsi - \bar P_h(\bpsi) ) \diff x \\
    &-\int_\Omega \GRAD \sigma_z^\mu \cdot (\vare(\bE) \bE) \diff x + \int_\Omega R \DIV \bar P_h(\bpsi) \diff x,
\end{align*}
where we set $R = \frakq - \frakq_h$, $\bpsi = \sigma_z^\mu( P_h (\frakG) - \frakG_h)$, and the interpolants $P_h$ and $\bar P_h$ are described in \cite[Section 1.8]{MR342245}.

We must now derive suitable weighted bounds on the pair $(\frakG,\frakq)$, as it is done in \cite[Section 2.4]{MR342245}. We just comment that:
\begin{enumerate}[$\bullet$]
  \item \cite[Proposition 2]{MR342245}, which is  in \cite[Propostion 3.1]{MR2121575}, follows without changes, so that we obtain
  \[
    \| \sigma_z^{\mu/2-1} \frakq \|_{L^2(\Omega)} \lesssim \| \sigma_z^{\mu/2-1} \vare(\frakG) \|_{\bL^2(\Omega)} + \kappa^{\mu/2-1} h^{\lambda/2-1}.
  \]
  
  \item \cite[Proposition 3]{MR342245}, which is  in \cite[Proposition 3.2]{MR2121575}, follows with little changes to obtain
  \[
    \| \sigma_z^{\mu/2-1} \vare(\frakG) \|_{\bL^2(\Omega)}^{\AJS{2}} \leq \| \sigma_z^{\mu/2-2} \frakG \|_{\bL^2(\Omega)} \left( c_1 \kappa^{\mu/2} h^{\lambda/2-1} +c_2\| \sigma_z^{\mu/2-1} \vare(\frakG) \|_{\bL^2(\Omega)} \right).
  \]
  
  \item \cite[Theorem 5]{MR342245}, follows without changes, so that we obtain the analogue of \cite[Corollary 1]{MR342245}:
  \[
    \| \sigma_z^{\mu/2-1} \vare(\frakG) \|_{\bL^2(\Omega)} + \| \sigma_z^{\mu/2-1} \frakq \|_{L^2(\Omega)}  \lesssim \kappa^{\mu/4}h^{\lambda/2-1}.
  \]
  
  \item To obtain the regularity estimates of \cite[Theorem 6]{MR342245}, we follow the arguments given in \cite[Theorem 3.6]{MR2121575} and realize that we must compute the effect of the Stokes operator on  $(\sigma_z^{\mu/2}\frakG,\sigma_z^{\mu/2}\frakq)$. After elementary computations, one realizes that
  \[
    -\DIV(\vare(\sigma_z^{\mu/2}\frakG)) + \GRAD(\sigma_z^{\mu/2}\frakq) = \sigma_z^{\mu/2}\left[-\DIV(\vare(\frakG)) + \GRAD \frakq\right] + \calF,
  \]
  where $\calF$ depends on $\frakG$, $\GRAD \frakG$, $\GRAD \sigma_z^{\mu/2}$, and $\frakq$. The important point is that $\calF \in \bL^2(\Omega)$. Using the regularity results for the Stokes operator of Proposition~\ref{prop:regular}, we conclude then that the right hand side of the expression above is an element of $\bL^2(\Omega)$. In addition, we have that
  \[
    \DIV(\sigma_z^{\mu/2}\frakG) = \GRAD \sigma_z^{\mu/2} \cdot \frakG \in W^{1,2}_0(\Omega) \cap \cL^2(\Omega).
  \]
  In conclusion, upon invoking Proposition~\ref{prop:regular} once again, we have that
  \[
    \| \sigma_z^{\mu/2} D^2\frakG \|_{\bL^2(\Omega)} + \| \sigma_z^{\mu/2} \GRAD \frakq \|_{\bL^2(\Omega)} \lesssim \kappa^{\mu/2}h^{\lambda/2-1}
  \]
  and \cite[Theorem 7]{MR342245} follows without changes.
\end{enumerate}

The discussion of \cite[Section 3]{MR342245} is about finite element spaces, and so it does not need any changes.

At this point we have set the stage to carry out the bootstrap procedure of \cite[Section 4]{MR342245}. To carry out the duality argument in the proof of Theorem 9, we must introduce the pair $(\bvphi,s) \in \bW^{1,2}_0(\Omega) \times \cL^2(\Omega)$ that solves
\[
  -\DIV(\vare(\bvphi)) + \GRAD s = \sigma_z^{\mu+\epsilon-2}(\frakG-\frakG_h), \quad \DIV \bvphi = 0, \ \text{ in } \Omega, \quad \bvphi = 0, \ \text{ on } \partial\Omega.
\]
The redefinition of the pressure indicated in Remark~\ref{rem:LapISDivVare} allows us to conclude, \AJS{owing to \cite{MR2228352}, that there is $\alpha \in (0,1)$ that depends on $\Omega$ and for which we have $(\bvphi,s) \in \bC^{1,\alpha}(\bar\Omega) \times C^{0,\alpha}(\bar\Omega)$} with an estimate similar to \cite[estimate (4.4)]{MR342245}. Upon replacing gradients by symmetric gradients in \cite[formulas (4.5), (4.6), and (4.7)]{MR342245} we arrive at the analogue of \cite[estimate (4.8)]{MR342245}:
\begin{multline*}
  \left\| \sigma_z^{\frac12(\mu+\epsilon)-1}(\frakG - \frakG_h) \right\|_{\bL^2(\Omega)}^2 \leq 
    \| \sigma_z^{-\mu/2} \vare(\bvphi - \bar P_h(\bvphi)) \|_{\bL^2(\Omega)} \| \sigma_z^{\mu/2}\vare(\frakG - \frakG_h ) \|_{\bL^2(\Omega)}  \\
    + \| \sigma_z^{-\mu/2} \DIV(\bvphi - \bar P_h(\bvphi)) \|_{L^2(\Omega)} \| \sigma_z^{\mu/2} (\frakq - r_h(\frakq)) \|_{L^2(\Omega)} \\
    \AJS{-} \int_\Omega (s - \bar r_h(s) ) \DIV(\frakG-\frakG_h) \diff x = \mathrm{I} + \mathrm{II} + \mathrm{III},
\end{multline*}
where \AJS{$\epsilon \in (0,1)$ is as in \cite[(4.8)]{MR342245},} $r_h$  and $\bar{r}_h$ are the interpolants described in \cite[Section 1.8]{MR342245}.
\AJS{Let us now show the slight departures from the argument in \cite{MR342245} with some detail}. As in \cite{MR342245}, the regularity of $(\bvphi,s)$ implies that
\begin{multline*}
  \| \sigma_z^{-\mu/2} \vare(\bvphi - \bar P_h(\bvphi)) \|_{\bL^2(\Omega)} + \| \sigma_z^{-\mu/2} \DIV(\bvphi - \bar P_h(\bvphi)) \|_{L^2(\Omega)} \\
    + \| \sigma_z^{-\mu/2}(s - \bar r_h(s)) \|_{L^2(\Omega)} \lesssim h^\alpha (\kappa h)^{-\lambda/2} \left\| \sigma_z^{\mu +\epsilon -2}(\frakG - \frakG_h) \right\|_{\bL^r(\Omega)},
\end{multline*}
where $\alpha = 1-3/r$. The estimate on terms $\mathrm{I}$ and $\mathrm{II}$ then proceeds as in \cite{MR342245}. The estimate of $\mathrm{III}$ is \AJS{obtained by noticing that, for any matrix $M$, $|\tr M| \lesssim | M |$}. In addition, for any vector field $\bv$, we have $\tr \vare(\bv) = \DIV \bv$. These observations allow us to write
\begin{align*}
  \mathrm{III}& \leq \| \sigma_z^{-\mu/2}(s - \bar r_h(s)) \|_{L^2(\Omega)} \|\sigma_z^{\mu/2} \DIV(\frakG - \frakG_h)\|_{L^2(\Omega)} \\
  &\lesssim \| \sigma_z^{-\mu/2}(s - \bar r_h(s)) \|_{L^2(\Omega)} \|\sigma_z^{\mu/2} \vare(\frakG - \frakG_h)\|_{\bL^2(\Omega)},
\end{align*}
and the estimate on $\mathrm{III}$ now proceeds as in \cite{MR342245}.

We have thus arrived at \cite[(4.10)]{MR342245}, where one can \AJS{Korn's inequality \eqref{eq:WeightedKorn} with $q=2$ and $\omega \equiv 1$}, to replace the gradient by a symmetric gradient. We can keep replacing gradients by $\vare$ to arrive at the conclusion of \cite[Theorem 9]{MR342245}.

Once \cite[Theorem 9]{MR342245} holds, \cite[Corollaries 2 and 3]{MR342245} are a simple exercise and thus we obtain
\[
  \| \sigma_z^{\mu/2-1} (\frakG- \frakG_h) \|_{\bL^2(\Omega)}^2 \lesssim \frac1{\kappa^\alpha} \| \sigma_z^{\mu/2}\vare(\frakG - \frakG_h) \|_{\bL^2(\Omega)}^2 + \kappa^{\mu-\alpha} h^\lambda,
\]
and
\[
  \left\| \sigma_z^{\frac12(\mu+\epsilon)-1}(\frakG - \frakG_h) \right\|_{\bL^2(\Omega)}^2 \AJS{\leq \frac{(\kappa h)^\epsilon}{\kappa^\alpha} \left( C_\kappa \| \sigma_z^{\mu/2} \vare(\frakG- \frakG_h) \|_{\bL^2(\Omega)}^2 + \kappa^\mu h^\lambda \right)},
\]
\AJS{where $C_\kappa := 1 + \tfrac1{\kappa^\alpha}$}.

The estimates on the pressure term \cite[Section 5]{MR342245} hold with little or no modification. In \cite[Lemmas 7 and 8]{MR342245} one only needs to replace gradients with symmetric gradients. The same is true for \cite[Theorem 10]{MR342245}, which is \cite[Theorem 4.2]{MR2121575}. Then, \cite[Proposition 10 and Theorem 11]{MR342245} need no changes.

This, finally, brings us to \cite[Section 6]{MR342245}, where \cite[Theorem 12]{MR342245} proceeds without changes, proves \eqref{eq:ApproxGreenEstimate}, and concludes our argument.

As a final remark, we comment that in this new setting \cite[Theorem 13 and Corollary 4]{MR342245} also follow without changes. The pressure estimates of \cite[Section 6.2]{MR342245} only require to change \cite[(6.6)]{MR342245} accordingly.


\bibliographystyle{amsplain}
\bibliography{biblio}

\end{document}